\begin{document}

\theoremstyle{plain}

\newtheorem{thm}{Theorem}[section]
\newtheorem{lem}[thm]{Lemma}
\newtheorem{pro}[thm]{Proposition}
\newtheorem{cor}[thm]{Corollary}
\newtheorem{que}[thm]{Question}
\newtheorem{rem}[thm]{Remark}
\newtheorem{defi}[thm]{Definition}

\newtheorem*{thmA}{Theorem A}
\newtheorem*{thmB}{Theorem B}
\newtheorem*{thmC}{Theorem C}
\newtheorem*{thmD}{Theorem D}
\newtheorem*{thmE}{Theorem E}
\newtheorem*{corF}{Corollary F}

\newtheorem*{thmAcl}{Main Theorem$^{*}$}
\newtheorem*{thmBcl}{Theorem B$^{*}$}

\numberwithin{equation}{section}

\newcommand{\Maxn}{\operatorname{Max_{\textbf{N}}}}
\newcommand{\Syl}{\operatorname{Syl}}
\newcommand{\dl}{\operatorname{dl}}
\newcommand{\Con}{\operatorname{Con}}
\newcommand{\cl}{\operatorname{cl}}
\newcommand{\Stab}{\operatorname{Stab}}
\newcommand{\Aut}{\operatorname{Aut}}
\newcommand{\Ker}{\operatorname{Ker}}
\newcommand{\fl}{\operatorname{fl}}
\newcommand{\Irr}{\operatorname{Irr}}
\newcommand{\SL}{\operatorname{SL}}
\newcommand{\FF}{\mathbb{F}}
\newcommand{\NN}{\mathbb{N}}
\newcommand{\N}{\mathbf{N}}
\newcommand{\C}{\mathbf{C}}
\newcommand{\OO}{\mathbf{O}}
\newcommand{\F}{\mathbf{F}}

\renewcommand{\labelenumi}{\upshape (\roman{enumi})}

\newcommand{\PSL}{\operatorname{PSL}}
\newcommand{\PSU}{\operatorname{PSU}}

\providecommand{\V}{\mathrm{V}}
\providecommand{\E}{\mathrm{E}}
\providecommand{\ir}{\mathrm{Irr_{rv}}}
\providecommand{\Irrr}{\mathrm{Irr_{rv}}}
\providecommand{\re}{\mathrm{Re}}

\def\irrp#1{{\rm Irr}_{p'}(#1)}

\def\Z{{\mathbb Z}}
\def\C{{\mathbb C}}
\def\Q{{\mathbb Q}}
\def\irr#1{{\rm Irr}(#1)}
\def\irrv#1{{\rm Irr}_{\rm rv}(#1)}
\def \c#1{{\cal #1}}
\def\cent#1#2{{\bf C}_{#1}(#2)}
\def\syl#1#2{{\rm Syl}_#1(#2)}
\def\nor{\triangleleft\,}
\def\oh#1#2{{\bf O}_{#1}(#2)}
\def\Oh#1#2{{\bf O}^{#1}(#2)}
\def\zent#1{{\bf Z}(#1)}
\def\det#1{{\rm det}(#1)}
\def\ker#1{{\rm ker}(#1)}
\def\norm#1#2{{\bf N}_{#1}(#2)}
\def\alt#1{{\rm Alt}(#1)}
\def\iitem#1{\goodbreak\par\noindent{\bf #1}}
   \def \mod#1{\, {\rm mod} \, #1 \, }
\def\sbs{\subseteq}

\newcommand{\fS}{{\mathfrak{S}}}
\def\gc{{\bf GC}}
\def\ngc{{non-{\bf GC}}}
\def\ngcs{{non-{\bf GC}$^*$}}
\newcommand{\notd}{{\!\not{|}}}
\newcommand{\Out}{{\mathrm {Out}}}
\newcommand{\Mult}{{\mathrm {Mult}}}
\newcommand{\Inn}{{\mathrm {Inn}}}
\newcommand{\IBR}{{\mathrm {IBr}}}
\newcommand{\IBRL}{{\mathrm {IBr}}_{\ell}}
\newcommand{\IBRP}{{\mathrm {IBr}}_{p}}
\newcommand{\ord}{{\mathrm {ord}}}
\def\id{\mathop{\mathrm{ id}}\nolimits}
\renewcommand{\Im}{{\mathrm {Im}}}
\newcommand{\Ind}{{\mathrm {Ind}}}
\newcommand{\diag}{{\mathrm {diag}}}
\newcommand{\soc}{{\mathrm {soc}}}
\newcommand{\End}{{\mathrm {End}}}
\newcommand{\sol}{{\mathrm {sol}}}
\newcommand{\Hom}{{\mathrm {Hom}}}
\newcommand{\Mor}{{\mathrm {Mor}}}
\newcommand{\Mat}{{\mathrm {Mat}}}
\def\rank{\mathop{\mathrm{ rank}}\nolimits}
\newcommand{\Tr}{{\mathrm {Tr}}}
\newcommand{\tr}{{\mathrm {tr}}}
\newcommand{\Gal}{{\mathrm {Gal}}}
\newcommand{\Spec}{{\mathrm {Spec}}}
\newcommand{\ad}{{\mathrm {ad}}}
\newcommand{\Sym}{{\mathrm {Sym}}}
\newcommand{\Char}{{\mathrm {char}}}
\newcommand{\pr}{{\mathrm {pr}}}
\newcommand{\rad}{{\mathrm {rad}}}
\newcommand{\abel}{{\mathrm {abel}}}
\newcommand{\codim}{{\mathrm {codim}}}
\newcommand{\ind}{{\mathrm {ind}}}
\newcommand{\Res}{{\mathrm {Res}}}
\newcommand{\Ann}{{\mathrm {Ann}}}
\newcommand{\Ext}{{\mathrm {Ext}}}
\newcommand{\Alt}{{\mathrm {Alt}}}
\newcommand{\AAA}{{\sf A}}
\newcommand{\SSS}{{\sf S}}
\newcommand{\CC}{{\mathbb C}}
\newcommand{\CB}{{\mathbf C}}
\newcommand{\RR}{{\mathbb R}}
\newcommand{\QQ}{{\mathbb Q}}
\newcommand{\ZZ}{{\mathbb Z}}
\newcommand{\NB}{{\mathbf N}}
\newcommand{\ZB}{{\mathbf Z}}
\newcommand{\OB}{{\mathbf O}}
\newcommand{\EE}{{\mathbb E}}
\newcommand{\PP}{{\mathbb P}}
\newcommand{\GC}{{\mathcal G}}
\newcommand{\HC}{{\mathcal H}}
\newcommand{\PC}{{\mathcal P}}
\newcommand{\QC}{{\mathcal Q}}
\newcommand{\MC}{{\mathcal M}}
\newcommand{\TC}{{\mathcal T}}
\newcommand{\LC}{{\mathcal L}}
\newcommand{\CL}{{\mathcal C}}
\newcommand{\EC}{{\mathcal E}}
\newcommand{\GCD}{\GC^{*}}
\newcommand{\TCD}{\TC^{*}}
\newcommand{\tV}{\tilde{V}}
\newcommand{\bFq}{{\bar{\FF}_q}}
\newcommand{\FD}{F^{*}}
\newcommand{\GD}{G^{*}}
\newcommand{\HD}{H^{*}}
\newcommand{\GCF}{\GC^{F}}
\newcommand{\TCF}{\TC^{F}}
\newcommand{\PCF}{\PC^{F}}
\newcommand{\GCDF}{(\GC^{*})^{F^{*}}}
\newcommand{\RGTT}{R^{\GC}_{\TC}(\theta)}
\newcommand{\RGTA}{R^{\GC}_{\TC}(1)}
\newcommand{\SR}{{^*R}}
\newcommand{\Om}{\Omega}
\newcommand{\eps}{\epsilon}
\newcommand{\varep}{\varepsilon}
\newcommand{\al}{\alpha}
\newcommand{\chis}{\chi_{s}}
\newcommand{\sigmad}{\sigma^{*}}
\newcommand{\PA}{\boldsymbol{\alpha}}
\newcommand{\gam}{\gamma}
\newcommand{\lam}{\lambda}
\newcommand{\la}{\langle}
\newcommand{\ra}{\rangle}
\newcommand{\hs}{\hat{s}}
\newcommand{\htt}{\hat{t}}
\newcommand{\hP}{\hat{P}}
\newcommand{\hR}{\hat{R}}
\newcommand{\hG}{\hat{G}}
\newcommand{\bM}{\bar{M}}
\newcommand{\tv}{\tilde\varepsilon}
\newcommand{\tCk}{\tilde{C}_\kappa}
\newcommand{\orm}{{\mathrm {o}}}
\newcommand{\tn}{\hspace{0.5mm}^{t}\hspace*{-0.2mm}}
\newcommand{\ta}{\hspace{0.5mm}^{2}\hspace*{-0.2mm}}
\newcommand{\tb}{\hspace{0.5mm}^{3}\hspace*{-0.2mm}}
\def\skipa{\vspace{-1.5mm} & \vspace{-1.5mm} & \vspace{-1.5mm}\\}
\newcommand{\tw}[1]{{}^#1\!}
\renewcommand{\mod}{\bmod \,}

\marginparsep-0.5cm

\renewcommand{\thefootnote}{\fnsymbol{footnote}}
\footnotesep6.5pt

\title{Restriction of odd degree characters and natural correspondences}

\author[E. Giannelli]{Eugenio Giannelli}
\address{FB Mathematik, TU Kaiserslautern, Postfach 3049,
        67653 Kaisers\-lautern, Germany.}
\email{giannelli@mathematik.uni-kl.de}

\author[A. Kleshchev]{Alexander Kleshchev}
\address{Department of Mathematics 
University of Oregon
Eugene, OR 97403-1222
USA }
\email{klesh@uoregon.edu} 
\author[G. Navarro]{Gabriel Navarro}
\address{Departament d'\`Algebra, Universitat de Val\`encia, 46100 Burjassot,
Val\`encia, Spain}
\email{gabriel.navarro@uv.es}
\author[P. H. Tiep]{Pham Huu Tiep}
\address{Department of Mathematics, University of Arizona, Tucson, AZ 85721, USA}
\email{tiep@math.arizona.edu}

\thanks{The first author gratefully acknowledges financial support by the
  ERC Advanced Grant 291512. The second author was supported by the NSF (grant DMS-1161094) and the Max-Planck-Institut. 
The research of the third author is supported by the Prometeo/Generalitat Valenciana,
Proyectos MTM2013-40464-P. The fourth author
was supported by the NSF (grant DMS-1201374).}

\thanks{We thank Rod Gow for communicating his observation concerning $GL_n(q)$ in Corollary F to us,
as well as for suggesting that the same statement should hold for $GU_n(q)$.}
 
\keywords{Restriction, Odd Degree Characters, Natural Correspondences}

\subjclass[2010]{Primary 20C15}

\begin{abstract}
Let $q$ be an odd prime power, $n > 1$, and let $P$ denote a maximal parabolic subgroup of 
$GL_n(q)$ with Levi subgroup $GL_{n-1}(q) \times GL_1(q)$. 
We restrict the odd-degree irreducible characters of $GL_n(q)$ to
$P$ to discover a natural correspondence of characters, both for $GL_n(q)$ and $SL_n(q)$. 
A similar result is established for certain finite groups with self-normalizing Sylow $p$-subgroups.
We also construct a canonical bijection between the odd-degree irreducible characters of $\SSS_n$ 
and those of $M$, where $M$ is any maximal subgroup of $\SSS_n$ of odd index; as well as 
between the odd-degree irreducible characters of $G = GL_n(q)$ or $GU_n(q)$ with $q$ odd and those of 
$N_{G}(P)$, where $P$ is a Sylow $2$-subgroup of $G$.
Since our bijections commute with the action of the absolute Galois group over the
rationals,  we conclude that the fields of values of character
correspondents  are the same. We use this to answer some questions of R. Gow.

\end{abstract}

\maketitle

\section{Introduction}
It is not often the case that a natural correspondence of characters between a group $G$
and a subgroup $H$ of $G$ is found. Even more rarely this correspondence
can be described by inspecting the restriction of characters from $G$ to $H$. The paradigmatic
example of this is the Glauberman correspondence between the $P$-invariant
irreducible characters of a finite group $G$
of order not divisible by a prime $p$, acted by the $p$-group $P$, and the
irreducible characters of the fixed point subgroup $\cent GP$. 

If $G$ is a finite group and
$p$ is a prime, the {\it McKay conjecture} (cf. \cite{McKay,McK}) asserts that $|\Irr_{p'}(G)|=|\Irr_{p'}(\NB_G(P))|$ for $P\in\Syl_p(G)$, where 
$\operatorname{Irr}_{p'}(G)$ is the set of 
the irreducible complex characters of $G$ of degree not divisible
by $p$.

The focus of this paper is on the existence of {\it canonical correspondences} between 
$\Irr_{p'}(G)$ and $\Irr_{p'}(H)$ for certain pairs $(G,H)$ of finite groups with $G > H \geq \NB_G(P)$. Even in the cases where the McKay 
conjecture is known to hold for $G$ and $H$ (cf. \cite{O1}, \cite{MS}) and thereby
the existence of a bijection between $\Irr_{p'}(G)$ and $\Irr_{p'}(H)$ is guaranteed, this
resulting bijection may not give a canonical correspondence.  
For instance, one expects a canonical correspondence 
to commute with the action of the absolute Galois group over the
rationals, and in this case, the fields of values of character
correspondents  must be the same. This does not happen often.
Also, one expects that canonical correspondences
between $\Irr_{p'}(G)$ and $\Irr_{p'}(H)$ will commute with 
every automorphism of $G$ that stabilizes $H$, and  provide 
essential information
on cohomological character theoretical questions.
Furthermore, there is some hope that certain
canonical correspondences will play 
an important role in proving various refinements of the McKay conjecture (eg. \cite{N2}).  

\medskip

 For any character $\chi$ of $G$ we denote by $\chi_H$ its restriction to a subgroup $H$. 
\begin{defi}
{\rm 
An arbitrary subgroup $H\leq G$ is called {\em $p$-restriction good}\, if for every $\chi\in\operatorname{Irr}_{p'}(G)$, there exists a unique $\chi^*\in \operatorname{Irr}_{p'}(H)$ such that  
$\chi_H=\chi^*+\Delta$  
and either $\Delta = 0$ or all irreducible constituents of $\Delta$ have degrees divisible by $p$. A $p$-restriction good subgroup $H\leq G$ is 
called {\em $p$-restriction canonical} if the map $\chi \mapsto \chi^*$ yields a bijection between $\operatorname{Irr}_{p'}(G)$
and $\operatorname{Irr}_{p'}(H)$.
}
\end{defi}

\medskip
Very recently, the following result\footnote{In fact, we independently conjectured this statement. But while we were working
on the proof of it, we learned of the preprint \cite{APS} in which the conjecture was proved.}
has been proved: 

\begin{thm}\label{sn} {\rm \cite{APS}} 
Let $n\in\Z_{>1}$. Then $\SSS_{n-1}$ is a $2$-restriction good subgroup in $\SSS_n$. Moreover, if $n$ is odd, then $\SSS_{n-1}$ is a $2$-restriction canonical subgroup in $\SSS_n$. \end{thm}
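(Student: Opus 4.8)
The plan is to reduce the statement to combinatorics of partitions and settle it using the branching rule together with the classical description of the odd-degree irreducible characters of symmetric groups. Recall (Macdonald; Olsson; James--Kerber) that for $\lambda\vdash n$ the character $\chi^\lambda$ has odd degree if and only if the $2$-core tower of $\lambda$ is \emph{binary}: writing $n=\sum_{j\ge 0}m_j2^j$ with $m_j\in\{0,1\}$, at each level $j$ with $m_j=1$ exactly one of the $2^j$ $2$-cores at level $j$ is a single box and the rest are empty, while at each level $j$ with $m_j=0$ all level-$j$ cores are empty. In particular the $2$-core of an odd-degree $\lambda\vdash n$ is a single box if $n$ is odd and is empty if $n$ is even, and counting the available choices of positions for the boxes recovers Macdonald's formula $|\Irr_{2'}(\SSS_n)|=2^{\sum_{j:\,m_j=1}j}$; this quantity is unchanged on passing from $n$ to $n-1$ whenever $n$ is odd.

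By the branching rule, $\chi^\lambda|_{\SSS_{n-1}}=\sum_{\square}\chi^{\lambda\setminus\square}$, the sum over removable boxes $\square$ of $\lambda$. Since $\sum_{\square}\chi^{\lambda\setminus\square}(1)=\chi^\lambda(1)$ is odd, an odd number of these constituents have odd degree, so for the ``$2$-restriction good'' part it suffices to prove that \emph{at most one} does. Thus the heart of the matter is the combinatorial lemma: if $\chi^\lambda$ has odd degree, there is a unique removable box $\square$ of $\lambda$ with $\chi^{\lambda\setminus\square}$ again of odd degree. I would prove this by tracking how removing a single box affects the $2$-core tower. Realizing $\lambda$ on the $2$-abacus with an even number of beads, removing a box amounts to sliding one bead forward by one position, which transfers it between the two runners; this shifts the $2$-core by one ``charge'' unit and perturbs the $2$-quotient, hence the higher levels of the tower, in a controlled way. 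One then checks that exactly one such slide produces the tower forced on an odd-degree partition of $n-1$, namely the binary tower of $n-1$: when $n$ is odd this means altering only level $0$ (deleting the unique core box) and leaving the $2$-quotient intact, while when $n$ is even it means a prescribed redistribution across levels $0,\dots,a$, where $2^{a}$ exactly divides $n$.

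To organize this lemma I would induct on the number of binary digits of $n$. In the base case $n=2^{a}$ the odd-degree characters are exactly the hook characters $\chi^{(2^{a}-\ell,\,1^{\ell})}$ with $0\le\ell\le 2^{a}-1$, and the claim reduces to the observation that of the two binomial coefficients $\binom{2^{a}-2}{\ell}$ and $\binom{2^{a}-2}{\ell-1}$, the degrees of the two restriction constituents, exactly one is odd, by Lucas' theorem. For the inductive step one peels off the unique removable rim hook $R$ of length $2^{n_1}$ (where $2^{n_1}\le n<2^{n_1+1}$; uniqueness holds because an odd-degree $\lambda$ has $2^{n_1}$-weight exactly $1$), reducing $\lambda$ to an odd-degree partition $\lambda\setminus R$ of $n-2^{n_1}$, and then analyses separately the removable boxes of $\lambda$ lying inside $R$ and those lying in $\lambda\setminus R$, invoking the inductive hypothesis for the latter family.

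Finally, for the ``$2$-restriction canonical'' assertion when $n$ is odd it remains to see that $\chi^\lambda\mapsto\chi^*$ is a bijection $\Irr_{2'}(\SSS_n)\to\Irr_{2'}(\SSS_{n-1})$. As the two sets have equal cardinality, injectivity suffices, and this is immediate from the refined description above: for $n$ odd the passage from $\lambda$ to the unique good $\lambda\setminus\square$ changes only the level-$0$ entry of the $2$-core tower and leaves every higher level — which together carry all the remaining combinatorial data of $\lambda$ — untouched, so $\chi^*$ determines $\lambda$; equivalently, for each odd-degree $\mu\vdash n-1$ there is a unique addable box $\square'$ with $\chi^{\mu\cup\square'}$ of odd degree, which produces the inverse map. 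The main obstacle throughout is the bookkeeping in the central lemma: precisely controlling how a single-box slide on the $2$-abacus propagates through the recursively defined $2$-core tower, and verifying the ``exactly one good box'' statement in every case; everything else is either classical input or a bounded parity computation.
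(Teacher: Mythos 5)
First, a point of comparison: the paper does not prove Theorem~1.1 at all --- it is imported from \cite{APS} (see the footnote attached to the theorem) --- so there is no internal proof to measure your argument against. Your architecture is nevertheless the right one, and essentially the one used in \cite{APS}: Macdonald's characterization of odd-degree $\chi^\lambda$ via binary $2$-core towers, the branching rule, and the parity observation that $\chi^\lambda(1)=\sum_{\square}\chi^{\lambda\setminus\square}(1)$ forces an odd number of odd-degree constituents, so that everything reduces to showing that \emph{at most one} removable box of an odd $\lambda\vdash n$ yields an odd partition of $n-1$. Your base case $n=2^a$ is correct: the odd partitions are exactly the hooks, and exactly one of $\binom{2^a-2}{\ell}$, $\binom{2^a-2}{\ell-1}$ is odd because their sum $\binom{2^a-1}{\ell}$ is odd. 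Note also that the ``canonical'' half for $n$ odd is cheaper than you make it: every $\rho\in\Irr_{2'}(\SSS_{n-1})$ induces to a character of odd degree $n\rho(1)$, hence lies under some $\chi\in\Irr_{2'}(\SSS_n)$ and equals $\chi^*$; surjectivity together with the equality of cardinalities you extract from Macdonald's count gives bijectivity. This is precisely Lemma~\ref{trivial} of the paper, and it avoids your unproved injectivity claim.

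The genuine gap is in the central lemma, and it is not ``bookkeeping'': it is the entire content of the \cite{APS} theorem. In your inductive step, the corners of $\lambda$ not lying on the unique rim $2^{n_1}$-hook $R$ can indeed be handled as you indicate --- such a corner $c$ survives as a corner of $\mu=\lambda\setminus R$, the partition $\lambda\setminus c$ still has $2^{n_1}$-weight $1$ with $2^{n_1}$-core $\mu\setminus c$ (because $2^{n_1}\leq n-1<2^{n_1+1}$), so $\lambda\setminus c$ is odd if and only if $\mu\setminus c$ is, and induction bounds the number of good corners off $R$ by one. But you say nothing concrete about the corners lying \emph{on} $R$: removing such a corner destroys $R$, the resulting partition may or may not acquire a different rim $2^{n_1}$-hook, and you must exclude the configurations in which two good corners occur (two on $R$, or one on $R$ together with an off-$R$ corner hitting the good corner of $\mu$). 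The abacus heuristic as stated does not decide this, and your assertion for odd $n$ that the good removal ``leaves the $2$-quotient intact'' is likewise unproved and not obvious --- sliding a bead between the two runners changes both quotient components in general, so an argument is needed to see that the level-$\geq 1$ data of the core tower is preserved. Until the on-$R$ analysis is carried out, the proposal is a correct reduction plus a base case, not a proof.
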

 
In this paper, we prove:

\begin{thmA}
Let $n\in\Z_{>1}$, $q$ be an odd prime power, and $P$ be  a maximal parabolic subgroup of 
$GL_n(q)$ with Levi subgroup $GL_{n-1}(q) \times GL_1(q)$. Then 
$P$ is a $2$-restriction good subgroup in $GL_n(q)$. Moreover, if $n$ is odd, then $P$ is a $2$-restriction canonical subgroup in $GL_n(q)$.
\end{thmA}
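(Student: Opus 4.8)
The strategy is to reduce the statement to an explicit understanding of the odd-degree irreducible characters of $GL_n(q)$ and of $P$, and of how induction/restriction relates them. First I would recall the combinatorial parametrization of $\irr{GL_n(q)}$ à la Green/Lusztig--Srinivasan: irreducible characters correspond to functions assigning to each Galois orbit of linear characters of $\bar{\FF}_q^\times$ (equivalently, each ``type'') a partition, with the constraint that the total size is $n$. The key number-theoretic input is a description of $\irrp{GL_n(q)}$ for $p=2$: since $q$ is odd, a Green-function/degree computation shows that $\chi$ has odd degree exactly when the associated partitions are all $2$-cores (staircase partitions) and, moreover, the relevant Sylow-$2$ structure forces these cores to live over semisimple classes of a very restricted kind. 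This is the analogue, on the $GL_n(q)$ side, of the hook-length/$2$-core description of $\irrp{\SSS_n}$ used in Theorem~\ref{sn}, and I expect it to follow from the standard formula $\chi_s(1) = |GL_n(q):C|_{p'}\cdot \psi(1)$ together with a $2$-adic valuation count.

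Next I would analyze $P = U \rtimes (GL_{n-1}(q)\times GL_1(q))$, where $U$ is the unipotent radical (an $\FF_q$-vector space of dimension $n-1$ on which the Levi acts). Because $U$ is a $p'$-group here (as $q$ is odd, $|U|$ is odd), Clifford theory over $U$ is clean: $\irrp{P}$ is controlled by the $GL_{n-1}(q)\times GL_1(q)$-orbits on $\irr U$ and their stabilizers. The trivial orbit contributes $\irrp{GL_{n-1}(q)}\otimes\irrp{GL_1(q)}$ (and $GL_1(q)$ is cyclic of even order, so only one of its characters — or a controlled set — survives the odd-degree condition), while nontrivial orbits have stabilizers of the form $q^{n-2}:(GL_{n-2}(q)\times\text{torus})$ whose contribution to $\irrp{P}$ I would show matches, degree for degree and field-of-values for field-of-values, the ``extra'' odd-degree characters of $GL_n(q)$ that are not already seen in $GL_{n-1}(q)$. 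Running this recursively in $n$ is the natural bookkeeping device.

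The heart of the argument — and the main obstacle — is to prove the restriction statement itself: for each $\chi\in\irrp{GL_n(q)}$, that $\chi_P$ contains a \emph{unique} odd-degree irreducible constituent $\chi^*$, with everything else of even degree. I would approach this by restricting in two stages, $GL_n(q)\supseteq P \supseteq GL_{n-1}(q)\times GL_1(q)\supseteq GL_{n-1}(q)$, and using the already-known symmetric-group-flavoured behaviour as a template: first understand $\chi$ restricted to the Levi via the Pieri-type branching rule for $GL_n \downarrow GL_{n-1}$ (removing a box from one partition), checking that among the resulting constituents exactly one has odd degree; then lift this through $U$ using that $U$ is a $p'$-group, so that an odd-degree character of the Levi that is fixed by no nontrivial $U$-character extends/induces to a unique odd-degree character of $P$, and the branching multiplicities of even-degree pieces stay even. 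The delicate points will be (a) controlling the $GL_1(q)$ factor and the center, since $GL_1(q)$ has even order and its characters interact with the determinant/central characters of $GL_n(q)$, and (b) verifying the \emph{uniqueness} of the odd constituent rather than just its existence — this likely requires a careful $2$-adic degree count showing that the sum of the even-degree multiplicities, together with the one odd piece, reconstructs $\chi_P(1)$ with the correct parity, which in turn rests on the precise $2$-core description from the first step. Finally, when $n$ is odd, a counting argument (or the recursion together with Theorem~\ref{sn}-type input) gives $|\irrp{GL_n(q)}| = |\irrp{P}|$, upgrading ``good'' to ``canonical''; the Galois-equivariance claimed in the abstract will be automatic because every step — the parametrization, branching, and the $p'$-Clifford correspondence over $U$ — is defined over $\QQ$ and hence commutes with $\Gal(\QQ^{\mathrm{ab}}/\QQ)$.
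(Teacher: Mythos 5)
There are several genuine gaps here, and the central one is the mechanism you propose for computing the odd part of $\chi_P$. You plan to restrict through the chain $GL_n(q)\supseteq P\supseteq GL_{n-1}(q)\times GL_1(q)\supseteq GL_{n-1}(q)$ using ``the Pieri-type branching rule for $GL_n\downarrow GL_{n-1}$''; no such clean rule exists for the \emph{ordinary} restriction of an irreducible character of $GL_n(q)$ to the subgroup $GL_{n-1}(q)$ (that restriction is enormous and badly behaved). What the proof actually requires is the \emph{Harish-Chandra} restriction $\SR^G_L(\chi)$, i.e.\ the character of $L$ on the $U$-fixed points: since $L$ acts transitively on the $q^{n-1}-1$ nontrivial characters of $U$ and $q^{n-1}-1$ is even, every irreducible character of $P$ nontrivial on $U$ has even degree, so all odd-degree constituents of $\chi_P$ live in $\SR^G_L(\chi)$. (This also shows your claim that the nontrivial $U$-orbits contribute odd-degree characters of $P$ matching ``extra'' characters of $GL_n(q)$ is wrong --- they contribute nothing to $\Irr_{2'}(P)$.) The computation of $\SR^G_L(\chi)$ then needs the Howlett--Lehrer Comparison Theorem (to reduce the principal-series case to the restriction $\chi^{\lam}\downarrow\SSS_{n-1}$ and invoke Theorem~\ref{sn}) together with the Mackey formula for Harish-Chandra induction/restriction and binomial parity lemmas to kill the second Mackey summand; none of these tools appear in your outline. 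Your fallback for uniqueness --- a $2$-adic parity count on $\chi_P(1)$ --- cannot work: it only shows the number of odd-degree constituents (with multiplicity) is odd, not that it equals one, and the $SL_3(2)$ example in the introduction (three odd constituents in every odd-index restriction) shows exactly this failure mode.

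Two further incorrect inputs: the odd-degree characters of $GL_n(q)$ are \emph{not} those whose attached partitions are $2$-cores (staircases); the correct statement, via the congruence $\chi^\lam(1)\equiv \deg S(s,\lam)\pmod 2$, is that each $s_i$ lies in $\FF_q^\times$, each $\chi^{\lam_i}\in\Irr_{2'}(\SSS_{k_i})$ (an ``odd partition'' in the Macdonald sense, which is almost never a $2$-core), and the $2$-adic valuations $[k_i]_2$ are pairwise distinct. And for the canonicity when $n$ is odd, the clean route is not a direct recursion but Olsson's McKay count $|\Irr_{2'}(G)|=|\Irr_{2'}(\NB_G(S))|$ for $S\in\Syl_2(G)$ combined with $\NB_G(S)\leq L$, giving $|\Irr_{2'}(G)|=|\Irr_{2'}(L)|=|\Irr_{2'}(P)|$. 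Your overall architecture (isolate the Levi, lean on the symmetric-group result, count at the end) points in the right direction, but as written the key steps either rest on false statements or on tools that do not exist in the form you need.
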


\begin{thmB}
Let $n\in\Z_{>1}$ be odd, $q$ an odd prime power, and $Q$ be a maximal parabolic subgroup of 
$SL_n(q)$ with Levi subgroup $(GL_{n-1}(q) \times GL_1(q)) \cap SL_n(q)$. Then 
$Q$ is a $2$-restriction canonical subgroup in $SL_n(q)$.
\end{thmB}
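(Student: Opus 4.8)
The plan is to deduce Theorem B from Theorem A by a Clifford-theoretic descent along the normal inclusions $SL_n(q)\trianglelefteq GL_n(q)$ and $Q\trianglelefteq P$. Write $G=GL_n(q)$, $S=SL_n(q)$, and let $P$ be the maximal parabolic of $G$ with Levi subgroup $GL_{n-1}(q)\times GL_1(q)$, so $Q=P\cap S$. Since the determinant restricts to a surjection of $P$ onto $\FF_q^\times$ with kernel $Q$, we have $G=SP$, $Q\trianglelefteq P$, and $P/Q\cong G/S\cong\FF_q^\times$ is cyclic of order $q-1$. Let $\hat A:=\Irr(\FF_q^\times)$; each $\nu\in\hat A$ inflates to a linear character $\hat\nu$ of $G$ (through $G/S$) and, compatibly, to one of $P$ (through $P/Q$), which we denote the same way, and $\hat A$ acts on $\Irr(G)$ and on $\Irr(P)$ via $\chi\mapsto\hat\nu\chi$; these actions preserve degrees, hence $\Irr_{2'}$. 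First I would observe that the correspondence $\Phi\colon\Irr_{2'}(G)\to\Irr_{2'}(P)$, $\chi\mapsto\chi^*$, of Theorem A is $\hat A$-equivariant: from $\chi_P=\chi^*+\Delta$ with $\Delta=0$ or of even degree we get $(\hat\nu\chi)_P=\hat\nu\chi^*+\hat\nu\Delta$, and the uniqueness clause in the definition of $2$-restriction good forces $(\hat\nu\chi)^*=\hat\nu\chi^*$. Hence $\Phi$ induces a bijection between $\hat A$-orbits on $\Irr_{2'}(G)$ and on $\Irr_{2'}(P)$ preserving $\hat A$-point-stabilizers.

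The observation that makes $n$ odd decisive is a parity lemma. Since $d:=\gcd(n,q-1)$ divides the odd integer $n$, it is odd; but $G/SZ(G)\cong\FF_q^\times/(\FF_q^\times)^n$ has order $d$, so a Sylow $2$-subgroup of $G/S$ lies in $SZ(G)/S$ and therefore acts on $S$ by inner automorphisms (because $Z(G)$ centralizes $S$). Consequently every $\psi\in\Irr(S)$ is invariant under a Sylow $2$-subgroup of $G/S$, so $|G:I_G(\psi)|$ is odd, and when $\psi\in\Irr_{2'}(S)$ the whole $\hat A$-orbit $\Irr(G\mid\psi)$ lies in $\Irr_{2'}(G)$. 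The same argument, using $Z(G)\leq P$, shows a Sylow $2$-subgroup of $P/Q$ acts on $Q$ by inner automorphisms, so $|P:I_P(\rho)|$ is odd for every $\rho\in\Irr(Q)$. I would then record two consequences of standard Clifford theory for the cyclic quotients $G/S$ and $P/Q$. First, $\psi\mapsto\Irr(G\mid\psi)$ gives a bijection between $G$-orbits on $\Irr_{2'}(S)$ and $\hat A$-orbits on $\Irr_{2'}(G)$, under which the $G$-orbit of $\psi$ (which equals its $P$-orbit, as $G=SP$ and $S$ acts by inner automorphisms) is precisely the set of constituents of $\chi_S$ for $\chi\in\Irr(G\mid\psi)$ — a multiplicity-free sum of characters all of odd degree, of cardinality equal to the order of the $\hat A$-stabilizer of $\chi$; and likewise downstairs for $Q\trianglelefteq P$. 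Second, writing $\theta^{(o)}$ for the sum, with multiplicities, of the odd-degree irreducible constituents of a character $\theta$, the fact that $|P:I_P(\rho)|$ is odd for all $\rho\in\Irr(Q)$ shows that $\Ind_Q^P$ carries odd-degree (resp.\ even-degree) characters of $Q$ to sums of odd-degree (resp.\ even-degree) irreducible characters of $P$; hence $(\Ind_Q^P\theta)^{(o)}=\Ind_Q^P(\theta^{(o)})$ for every character $\theta$ of $Q$.

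Now fix $\psi\in\Irr_{2'}(S)$, pick $\chi\in\Irr(G\mid\psi)\subseteq\Irr_{2'}(G)$, set $\chi^*=\Phi(\chi)$, and let $\mathcal O^*\subseteq\Irr_{2'}(Q)$ be the $P$-orbit of constituents of $\chi^*_Q$. Mackey's formula and $G=SP$ give $\Ind_Q^P(\psi_Q)=\Res^G_P\Ind^G_S\psi=\sum_{\chi'\in\Irr(G\mid\psi)}\chi'_P$, and taking odd-degree parts, invoking Theorem A and the equivariance of $\Phi$, yields
\[
\Ind_Q^P\bigl((\psi_Q)^{(o)}\bigr)=\bigl(\Ind_Q^P(\psi_Q)\bigr)^{(o)}=\sum_{\chi'\in\Irr(G\mid\psi)}(\chi')^*=\sum_{\xi\in\Irr(P\mid\rho_0)}\xi=\Ind_Q^P(\rho_0)
\]
for any $\rho_0\in\mathcal O^*$, the third equality because $\Phi(\Irr(G\mid\psi))$ is the $\hat A$-orbit $\Irr(P\mid\rho_0)$ and the fourth a standard identity for cyclic quotients. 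Since the right-hand side is multiplicity-free and $\Irr(P\mid\rho_0)$ lies over a single $P$-orbit, comparing what $\Ind_Q^P$ does to each irreducible constituent of $(\psi_Q)^{(o)}$ forces $(\psi_Q)^{(o)}$ to be a single element $\psi^*\in\mathcal O^*$ occurring with multiplicity one. Thus $\psi_Q=\psi^*+\Delta'$ with $\psi^*\in\Irr_{2'}(Q)$, $\Delta'$ zero or of even degree, and $\psi^*$ is the unique such constituent; that is, $Q$ is $2$-restriction good in $S$.

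Finally, $\psi\mapsto\psi^*=(\psi_Q)^{(o)}$ is $P$-equivariant (restriction and passage to the odd part commute with $P$-conjugation, $Q$ being normal in $P$), so it maps the transitive $P$-set $\mathcal O_\psi$ onto the transitive $P$-set $\mathcal O^*$, and since $|\mathcal O_\psi|$ equals the order of the $\hat A$-stabilizer of $\chi$, which equals that of $\chi^*$, which equals $|\mathcal O^*|$, this is a bijection $\mathcal O_\psi\to\mathcal O^*$. Distinct $G$-orbits on $\Irr_{2'}(S)$ go to distinct $P$-orbits on $\Irr_{2'}(Q)$ and every $P$-orbit is hit, because $\Phi$ is a bijection of $\hat A$-orbits — here the canonical half of Theorem A, which requires $n$ odd, is used. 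Hence $\psi\mapsto\psi^*$ is a bijection $\Irr_{2'}(S)\to\Irr_{2'}(Q)$ and $Q$ is $2$-restriction canonical in $S$. The hypothesis that $n$ be odd enters exactly twice, in the parity lemma and in the invocation of the canonical half of Theorem A, and the step I expect to require the most care is the identification in the third paragraph, matching the odd-degree constituents of $\psi_Q$ with those of $\chi^*_Q$; the device that makes it work is the compatibility of $\Ind_Q^P$ with the odd/even decomposition, itself a consequence of the parity lemma for $P/Q$.
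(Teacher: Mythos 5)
Your proof is correct, but it takes a genuinely different route from the paper's. The paper deduces Theorem B from two strong inputs: the theorem of Kleshchev--Tiep that $\chi_S$ is irreducible for every $\chi\in\Irr_{2'}(G)$ (applied once for $SL_n(q)\leq GL_n(q)$ and once more for $SL_{n-1}(q)\leq GL_{n-1}(q)$ to see that $(\chi^*)_{L_1}$ is irreducible), together with Olsson's count $|\Irr_{2'}(G)|=|\Irr_{2'}(L)|$; this pins down the unique odd constituent of $\theta_Q$ explicitly as $(\chi^*)_{L_1}$ and then finishes by the counting lemma. You instead run a purely formal Clifford-theoretic descent along the cyclic quotients $G/S\cong P/Q\cong\FF_q^\times$, whose only arithmetic input is that $\gcd(n,q-1)$ is odd, so that all inertia indices $|G:I_G(\psi)|$ and $|P:I_P(\rho)|$ are odd and induction $\Ind_Q^P$ respects the odd/even split; the comparison of $\Ind_Q^P((\psi_Q)^{(o)})$ with the multiplicity-free character $\Ind_Q^P\rho_0$ then does the work that irreducibility of $\chi_S$ does in the paper. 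Your argument is self-contained modulo Theorem A and would transfer to other normal-subgroup situations with cyclic quotient and odd inertia indices, whereas the paper's route yields the extra information that every $\theta\in\Irr_{2'}(S)$ extends to $G$ and gives a closed formula for $\psi^*$. One small presentational point: the canonical (bijective) half of Theorem A is already needed in your displayed chain of equalities --- the third equality requires $\Phi$ to be injective on the $\hat A$-orbit $\Irr(G\mid\psi)$, equivalently $\Stab_{\hat A}(\chi)=\Stab_{\hat A}(\chi^*)$, not only in the final orbit-counting step --- so you should flag that use of the hypothesis $2\nmid n$ there as well; since $n$ is odd throughout, this does not affect correctness.
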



\begin{thmC}
Let $G$ be a finite group, $p$ be a prime, and  $P \in \Syl_p(G)$.
Suppose that $P = \norm GP$, and in addition that $G$ is a solvable group if $p=2$. 
Let $P \leq H \le G$. Then $H$ is a $p$-restriction canonical subgroup in $G$.
\end{thmC}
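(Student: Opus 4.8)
The plan is to reduce the statement to the special case $H=P$ and then to handle that case by induction on $|G|$.

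\medskip

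\textbf{Reduction to $H=P$.} It suffices to prove the theorem when $H=P$. Granting this, $P$ is $p$-restriction canonical both in $G$ and in $H$ (note $\NB_H(P)=P$, $P\in\Syl_p(H)$, and $H$ is solvable whenever $G$ is). For $\psi\in\Irr_{p'}(H)$ write $\lambda_\psi\in\Irr(P/P')$ for the unique linear constituent of $\psi_P$, and likewise $\mu_\chi\in\Irr(P/P')$ for $\chi\in\Irr_{p'}(G)$; by hypothesis $\psi\mapsto\lambda_\psi$ and $\chi\mapsto\mu_\chi$ are bijections onto $\Irr(P/P')$. Given $\chi\in\Irr_{p'}(G)$, write $\chi_H=\sum_i a_i\psi_i+E$ with the $\psi_i\in\Irr_{p'}(H)$ pairwise distinct, $a_i\ge 1$, and $E$ a (possibly zero) sum of irreducible characters of degree divisible by $p$. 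Restricting once more to $P$ and keeping only the degree-one constituents, the contribution of $\sum_i a_i\psi_i$ is $\sum_i a_i\lambda_{\psi_i}$, a sum of \emph{distinct} linear characters with positive coefficients (distinctness since $\psi\mapsto\lambda_\psi$ is injective), whereas the degree-one part of $\chi_P$ is the single character $\mu_\chi$, with multiplicity $1$. As $\chi(1)=\chi_H(1)$ is prime to $p$, some $\psi_i$ really occurs; comparing degree-one parts forces exactly one $\psi_i$, occurring with $a_i=1$ and $\lambda_{\psi_i}=\mu_\chi$ (and $E_P$ then has no degree-one constituent). Hence $\chi^{*}:=\psi_i$ is the unique $p'$-degree constituent of $\chi_H$, it has multiplicity $1$, and $\lambda_{\chi^{*}}=\mu_\chi$; thus $H$ is $p$-restriction good in $G$, and $\chi\mapsto\chi^{*}$ is a bijection because composing it with the bijection $\psi\mapsto\lambda_\psi$ yields the bijection $\chi\mapsto\mu_\chi$.

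\medskip

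\textbf{The case $H=P$.} For $p$ odd it is known (using the classification of finite simple groups) that a group with a self-normalizing Sylow $p$-subgroup is solvable, so we may assume throughout that $G$ is solvable, and we induct on $|G|$, the case $G=P$ being trivial. If $K:=\OB_{p'}(G)\ne 1$, then $\mathbf{C}_K(P)\le\NB_G(P)\cap K=1$, so coprime action (the Glauberman correspondence) shows $1_K$ is the only $P$-invariant irreducible character of $K$; by Clifford theory every $\chi\in\Irr_{p'}(G)$ is then trivial on $K$, and since $\NB_{G/K}(PK/K)=PK/K$ the inductive statement for $G/K$ gives the result (restriction to $P$ being read off from restriction to $PK/K\cong P$). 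So assume $\OB_{p'}(G)=1$; then $V:=\OB_p(G)\ne 1$, and we fix a minimal normal subgroup $N\le V$ of $G$, an elementary abelian $p$-group with $N\le P$. A Sylow argument, using $\NB_G(P)=P$, shows that each $G$-orbit on $\Irr(N)$ of $p'$-length contains a unique $P$-invariant character, while every irreducible character of $G$ lying over an orbit of $p$-divisible length has degree divisible by $p$; hence $\Irr_{p'}(G)=\bigsqcup_{\vartheta}\Irr_{p'}(G\mid\vartheta)$ over the $P$-invariant $\vartheta\in\Irr(N)$, and it is enough to treat each summand. Put $T=I_G(\vartheta)\supseteq P$. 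If $T<G$, the Clifford correspondence (of $p'$-index) identifies $\Irr_{p'}(G\mid\vartheta)$ with $\Irr_{p'}(T\mid\vartheta)$, and combining the inductive statement for $T$ with Mackey's formula shows that, for $\chi=\psi^{G}$, the restriction $\chi_P$ has a unique linear constituent, namely the linear constituent of $\psi_P$, with multiplicity $1$. If $T=G$ and $\vartheta=1_N$, then $\Irr_{p'}(G\mid\vartheta)=\Irr_{p'}(G/N)$ and we apply the inductive statement to $G/N$ (where $\NB_{G/N}(P/N)=P/N$). If $T=G$ and $\vartheta\ne 1_N$, minimality of $N$ forces $N\le\mathbf{Z}(G)$; if $\vartheta$ extends to $G$ we use Gallagher's theorem together with the inductive statement for $G/N$, and if it does not we show $\Irr_{p'}(G\mid\vartheta)=\emptyset$. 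Assembling these pieces over all $\vartheta$ yields the bijection $\Irr_{p'}(G)\to\Irr(P/P')$ implemented by restriction.

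\medskip

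\textbf{The main obstacle.} The delicate step is $T<G$: one must check that, in the Mackey expansion of $\chi_P=(\psi^{G})_P$, every double-coset term $\Ind_{{}^{g}T\cap P}^{P}({}^{g}\psi)$ other than the principal one $\psi_P$ carries \emph{no} linear constituent. This is exactly where $\NB_G(P)=P$ is indispensable: a linear character of $P$ restricts to a $P$-invariant character of $N$, whereas $\Ind_{{}^{g}T\cap P}^{P}({}^{g}\psi)$ restricted to $N$ is supported on the $P$-orbit of ${}^{g}\vartheta$, and for $g\notin T$ the character ${}^{g}\vartheta$ is not $P$-invariant --- precisely because $\NB_G(P)=P$ forces $g\in T$ as soon as $P^{g^{-1}}\le T$. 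A second point needing care is the non-extendible subcase above: the extension obstruction lies in $H^{2}(G/N,\mathbb{C}^{\times})$ and has $p$-power order (since $\vartheta$ has $p$-power order), so it is nonzero on restriction to $\Syl_p(G/N)=P/N$; as a nontrivial cocycle on a $p$-group admits no projective character of degree prime to $p$, the relevant projective characters of $G/N$ all have degree divisible by $p$, giving $\Irr_{p'}(G\mid\vartheta)=\emptyset$. The same $p$-torsion/restriction remark also shows that, in the case $T<G$, $\vartheta$ extends to $T$ if and only if it extends to $P$, which is what makes the orbit-by-orbit counting balance.
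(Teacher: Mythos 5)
Your reduction from general $H$ to the case $H=P$ is correct, and it is essentially the same argument the paper uses; likewise your induction for solvable $G$ is in substance the paper's Theorem 3.1 (due to [N1] for $H=P$), so the $p=2$ case and, more generally, the $p$-solvable case are fine. The genuine gap is the sentence ``for $p$ odd it is known (using the classification) that a group with a self-normalizing Sylow $p$-subgroup is solvable.'' The Guralnick--Malle--Navarro theorem proves this only for $p>3$, and the statement is \emph{false} for $p=3$: take $G=PSL_2(27)\rtimes C_3$, the extension by the field automorphism $\phi$ of order $3$. Here $P=U\rtimes\langle\phi\rangle$ with $U$ elementary abelian of order $27$ is a Sylow $3$-subgroup; any element of $\NB_G(P)$ normalizes $U=P\cap PSL_2(27)$, hence lies in $(U\rtimes C_{13})\rtimes\langle\phi\rangle$, and since the image of $P$ in the Frobenius group $C_{13}\rtimes C_3$ is a self-normalizing complement, $\NB_G(P)=P$. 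Theorem C asserts something about this non-solvable $G$ (no solvability is assumed for odd $p$), and your proof never reaches it.

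The paper closes exactly this hole by invoking \cite[Corollary B]{NTV}, a CFSG-dependent result valid for \emph{all} odd primes, which supplies precisely the two inputs your reduction needs for $H=P$: that $\chi_P$ has a unique linear constituent, occurring with multiplicity one, for every $\chi\in\Irr_{p'}(G)$, and that $|\Irr_{p'}(G)|=|P/P'|$. If you replace your ``hence solvable'' step by that citation, the remainder of your argument (which then coincides with the paper's) goes through. A minor further remark: in your subcase $T=G$, $N\le\ZB(G)$, $\vartheta$ non-extendible, the cohomological discussion is unnecessary --- if some $\chi\in\Irr_{p'}(G\mid\vartheta)$ existed, then $\chi_P$ would contain a linear constituent lying over $\vartheta$, so $\vartheta$ would extend to $P$, and then by coprimality and the standard local-to-global criterion (\cite[Corollary 6.27 and Theorem 6.26]{Is}) to all of $G$, exactly as in the paper's proof of Theorem 3.1.
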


All these theorems might suggest that further results of this type can hold true
for arbitrary finite groups with self-normalizing Sylow $2$-subgroups. However, 
the group $G=SL_3(2)$ has a self-normalizing Sylow $2$-subgroups,
and an irreducible character $\chi \in \irr G$ of degree 7 such that
the restriction of $\chi$ to {\it every} odd-index proper subgroup $H$ of $G$
has exactly three irreducible constituents of odd degree. This example also shows that
Theorem A does not hold when $2|q$. Other examples also show that analogues of Theorem \ref{sn} and 
Theorems A, B do not seem to hold when $p > 2$.
However, canonical character correspondences, although
not necessarily defined by restriction, can be obtained for symmetric groups and finite general linear and unitary 
groups, again for $p=2$. 
The meaning of the word {\em canonical} in the following theorems will become clear in Sections~\ref{SD}, \ref{SE};
in particular, the constructed character correspondences commute with the action of Galois automorphisms and 
group automorphisms.

\begin{thmD}
Let $n \in \Z_{>1}$ and let $M$ be a maximal subgroup of $\SSS_n$ of odd index. Then there is a canonical 
bijection between $\Irr_{2'}(\SSS_n)$ and $\Irr_{2'}(M)$.
\end{thmD}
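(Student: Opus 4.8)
The plan is first to classify the maximal subgroups $M$ of $\SSS_n$ of odd index, and then to handle each class separately, leveraging Theorem~\ref{sn} as the key input. By a classification due to Liebeck--Praeger--Saxl (and elementary arguments), a maximal subgroup of odd index in $\SSS_n$ is either (i) an intransitive subgroup $\SSS_k \times \SSS_{n-k}$ with $\binom{n}{k}$ odd, (ii) an imprimitive subgroup $\SSS_a \wr \SSS_b$ with $ab = n$, $b > 1$, of odd index, or (iii) a primitive subgroup of odd index, which by the classification of primitive odd-index subgroups forces $n = 2^m$ and $M$ is (the affine/linear type group) $\mathrm{AGL}_m(2)$ acting on $2^m$ points, or a few small sporadic exceptions. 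The first step is thus to reduce to these cases; most of the work is in (i), since the binomial coefficient $\binom{n}{k}$ being odd is, by Kummer's theorem, equivalent to the binary digits of $k$ being a ``submask'' of those of $n$, and one can build the bijection by iterating the $\SSS_{m-1} \le \SSS_m$ correspondence of Theorem~\ref{sn}.

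For case~(i): write $n = \sum_{i} 2^{a_i}$ in binary. Since $\binom{n}{k}$ is odd, $k = \sum_{i \in I} 2^{a_i}$ for some subset $I$ of the binary digits of $n$, and $n - k = \sum_{i \notin I} 2^{a_i}$. The point is that both $k$ and $n-k$ are then sums of \emph{distinct} binary digits appearing in $n$, and the $2$-Sylow structure splits accordingly: $\Syl_2(\SSS_n) \cong \Syl_2(\SSS_k) \times \Syl_2(\SSS_{n-k})$. I would prove, by induction on the number of binary digits, that the iterated application of Theorem~\ref{sn} gives a canonical bijection between $\Irr_{2'}(\SSS_n)$ and $\Irr_{2'}(\SSS_{2^{a_1}} \times \cdots \times \SSS_{2^{a_r}})$; since $\Irr_{2'}$ of a direct product is the ``product'' of the $\Irr_{2'}$'s (as $2 \nmid \deg$ is multiplicative), regrouping the factors according to $I$ versus its complement yields the desired bijection $\Irr_{2'}(\SSS_n) \leftrightarrow \Irr_{2'}(\SSS_k \times \SSS_{n-k})$. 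The canonicity is automatic because each step is the canonical correspondence of Theorem~\ref{sn} (for $n$ a $2$-power, $n$ is even so one uses only the ``$2$-restriction good'' part, but the resulting bijection between $\Irr_{2'}(\SSS_{2^a})$ and $\Irr_{2'}(\SSS_{2^a - 1})$ is still canonical, being the one singled out by restriction); moreover it commutes with the Galois action on characters and with $\Aut(\SSS_n) = \SSS_n$-conjugacy, since each elementary step does.

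For cases~(ii) and~(iii): here a direct restriction correspondence need not exist, so ``canonical'' must be interpreted via a combinatorial model. For the wreath product case $\SSS_a \wr \SSS_b$, I would use the hook-length/core description of odd-degree characters of $\SSS_n$ (the Macdonald count $|\Irr_{2'}(\SSS_n)| = 2^{\sum a_i}$ when $n = \sum 2^{a_i}$, and Olsson's / Giannelli's explicit parametrization of $\Irr_{2'}(\SSS_n)$ by sequences indexing the $2$-core tower), together with the fact that $\Irr_{2'}$ of $\SSS_a \wr \SSS_b$ likewise has a tower description; matching the two parametrizations canonically (both are indexed by the same combinatorial data coming from the binary expansion of $n$) produces the bijection, and one checks compatibility with the Clifford-theoretic action and the Galois action. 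Finally, for the sporadic primitive case $n = 2^m$, $M = \mathrm{AGL}_m(2)$ (and the handful of small exceptions like $M_{12} \le \SSS_{12}$, etc.), I would verify the statement by hand / direct computation, since both character tables are explicit. The main obstacle is case~(ii): establishing a genuinely \emph{canonical} (Galois- and automorphism-equivariant) bijection for $\SSS_a \wr \SSS_b$ requires carefully aligning the $2$-core tower parametrization of $\Irr_{2'}(\SSS_n)$ with the Clifford theory of the wreath product, and verifying that the combinatorial identification does not depend on auxiliary choices; this is where the bulk of the technical effort lies, and I expect it will be settled in Section~\ref{SD} using the machinery developed there.
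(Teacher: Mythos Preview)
Your case split contains a genuine error and your approach to case~(i) has a real gap.

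First, case~(iii) is spurious. The classification of Liebeck--Saxl \cite{LS} and Kantor \cite{K} (which the paper cites) shows that for $n \geq 5$ the maximal subgroups of $\SSS_n$ of odd index are \emph{only} the intransitive $\SSS_k \times \SSS_{n-k}$ and the imprimitive $\SSS_k \wr \SSS_t$; no primitive examples survive. In particular $\mathrm{AGL}_3(2)$ has index $30$ in $\SSS_8$ and $M_{12}$ has index $5040$ in $\SSS_{12}$, both even. So your ``by hand'' verifications would be attacking cases that do not occur.

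Second, and more seriously, your treatment of case~(i) does not work as written. Theorem~\ref{sn} gives a map $\Irr_{2'}(\SSS_m) \to \Irr_{2'}(\SSS_{m-1})$, but it is a \emph{bijection} only when $m$ is odd; for even $m$ the two sets have different cardinalities (e.g.\ $|\Irr_{2'}(\SSS_8)| = 8$ while $|\Irr_{2'}(\SSS_7)| = 8$, but $|\Irr_{2'}(\SSS_4)| = 4$ while $|\Irr_{2'}(\SSS_3)| = 2$). So you cannot iterate Theorem~\ref{sn} down a chain $\SSS_n > \SSS_{n-1} > \cdots$ and preserve bijectivity, since consecutive integers cannot both be odd. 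Moreover, Theorem~\ref{sn} lands in $\SSS_{m-1}$, not in a direct product, so it is unclear how iteration would ever reach the Young subgroup $\SSS_{2^{a_1}} \times \cdots \times \SSS_{2^{a_r}}$.

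The paper circumvents both issues by \emph{not} iterating Theorem~\ref{sn}. Instead it first establishes directly (Theorem~\ref{sym}, via the hook and core combinatorics of Lemmas~\ref{hook} and~\ref{core}) a canonical bijection $\Irr_{2'}(\SSS_n) \to \Theta(n) := \HC(2^{n_1}) \times \cdots \times \HC(2^{n_r})$, and thence to $\Irr_{2'}(P)$ for $P \in \Syl_2(\SSS_n)$. This bijection $\chi \mapsto \chi^\sharp$ is then used as a bridge: for a Young subgroup one simply regroups the factors of $P$ (Corollary~\ref{young}), and for a wreath product $\SSS_k \wr \SSS_t$ one analyzes $\Irr_{2'}(M)$ and $\Irr_{2'}(P)$ in parallel via Clifford theory, matching the resulting parametrizations through $\chi^\sharp$. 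Your instinct in case~(ii) to use a combinatorial parametrization is correct in spirit, but the concrete mechanism the paper uses---passing through the Sylow subgroup on \emph{both} sides---is what makes the argument go, and it is precisely this mechanism that also handles case~(i) without any appeal to iterated restriction.
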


\begin{thmE}
Let $n \in \Z_{\geq 1}$, $q$ be an odd prime power, $G = GL_n(q)$ or $GU_n(q)$, and $P \in \Syl_2(G)$. Then there is a canonical 
bijection $\chi \mapsto \chi^\sharp$ between $\Irr_{2'}(G)$ and $\Irr_{2'}(\NB_G(P))$.
\end{thmE}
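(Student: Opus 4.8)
The plan is to treat the two families at once, writing $G=GL^{\varepsilon}_n(q)$ with $\varepsilon=+$ for $GL_n(q)$ and $\varepsilon=-$ for $GU_n(q)$, and to build $\chi\mapsto\chi^{\sharp}$ by a recursion running along the binary expansion of $n$ and the iterated-wreath-product structure of $P$. The first step is the reduction to $n=2^a$. Writing $n=2^{a_1}+\cdots+2^{a_k}$ with $a_1>\cdots>a_k\geq 0$, one has (up to conjugacy) $P=P_1\times\cdots\times P_k$ with $P_i\in\Syl_2(GL^{\varepsilon}_{2^{a_i}}(q))$ acting on a nondegenerate $2^{a_i}$-dimensional summand $V_i$ of the natural module; since each $P_i$ contains a fixed-point-free central $2$-element and the $P_i$ have pairwise distinct orders, $N_G(P)$ preserves each $V_i$, so $\NB_G(P)=\prod_i\NB_{GL^{\varepsilon}_{2^{a_i}}(q)}(P_i)$ and $\Irr_{2'}(\NB_G(P))=\prod_i\Irr_{2'}(\NB_{GL^{\varepsilon}_{2^{a_i}}(q)}(P_i))$. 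On the character side I would use the Lusztig/Jordan-decomposition parametrisation of $\Irr(G)$ together with the known fact that the $2$-part of a unipotent character degree of $GL^{\varepsilon}_m(q)$ mirrors the corresponding degree in $\Irr(\SSS_m)$: this forces any odd-degree character to decompose along the same block structure, and produces a $\Gal(\bar\QQ/\QQ)$- and $\Aut(G)$-equivariant identification $\Irr_{2'}(G)=\prod_i\Irr_{2'}(GL^{\varepsilon}_{2^{a_i}}(q))$, reducing Theorem~E to the case $n=2^a$.

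For $n=2^a$ I would induct on $a$. Put $K=GL^{\varepsilon}_{2^{a-1}}(q)\wr\SSS_2\leq GL^{\varepsilon}_{2^a}(q)$; a direct $2$-adic order computation (lifting the exponent applied to $v_2(q^i-\varepsilon^i)$) shows $[GL^{\varepsilon}_{2^a}(q):K]$ is odd whenever $a\geq 2$, and for $a=1$ precisely when $q\equiv\varepsilon\pmod 4$. A Sylow $2$-subgroup $P=P_0\wr\SSS_2$ of $GL^{\varepsilon}_{2^a}(q)$ (with $P_0\in\Syl_2(GL^{\varepsilon}_{2^{a-1}}(q))$) then lies in $K$, and $\NB_{GL^{\varepsilon}_{2^a}(q)}(P)=\NB_K(P)$ is an odd-index subgroup of $M\wr\SSS_2$ — the fibre product $\{(g_1,g_2)\sigma^j : g_1P_0=g_2P_0\}$ — where $M=\NB_{GL^{\varepsilon}_{2^{a-1}}(q)}(P_0)$. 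So Theorem~E for $GL^{\varepsilon}_{2^a}(q)$ reduces to two canonical bijections: (i) $\Irr_{2'}(GL^{\varepsilon}_{2^a}(q))\leftrightarrow\Irr_{2'}(K)$, and (ii) $\Irr_{2'}(K)\leftrightarrow\Irr_{2'}(\NB_K(P))$. For (ii) one combines: that $\Irr_{2'}(A\wr\SSS_2)$ is exactly the set of the two extensions to $A\wr\SSS_2$ of $\psi\times\psi$ for $\psi\in\Irr_{2'}(A)$ (the induced characters $\Ind(\psi\times\phi)$, $\psi\neq\phi$, having even degree); the analogous description of $\Irr_{2'}(\NB_K(P))$ over its index-$2$ base; the inductive bijection $\Irr_{2'}(GL^{\varepsilon}_{2^{a-1}}(q))\leftrightarrow\Irr_{2'}(M)$; and a Clifford bijection between $\Irr_{2'}(M)$ and the $\sigma$-stable odd-degree characters of that base — the two extension choices being pinned down on each side by the value $\pm1$ taken on the transposition $\sigma$. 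The recursion bottoms out at $GL^{\varepsilon}_1(q)$, which is abelian and self-normalising (so the correspondence is the identity), and — when $q\not\equiv\varepsilon\pmod 4$ — at $GL^{\varepsilon}_2(q)$, where $P$ is (semi)dihedral or generalised quaternion, $\NB_G(P)$ is an explicit maximal-torus normaliser, and the bijection and its equivariance are written down by hand.

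Step (i) is the crux and the step I expect to be the main obstacle. I would establish it by proving $K$ is $2$-restriction good in $GL^{\varepsilon}_{2^a}(q)$ — equivalently, by analysing the restriction of an odd-degree $\chi\in\Irr(GL^{\varepsilon}_{2^a}(q))$ to the odd-index wreath subgroup $K$ directly on the Lusztig parametrisations of both sides, using Clifford theory for $GL^{\varepsilon}_{2^{a-1}}(q)\times GL^{\varepsilon}_{2^{a-1}}(q)\lhd K$ and Theorems~A and~C to dispose of the parabolic and solvable pieces that appear — thereby isolating a unique odd-degree constituent $\chi^{*}$ of $\chi_K$ and, by a count, making $\chi\mapsto\chi^{*}$ bijective. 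The delicate point is precisely to control this restriction well enough to read off a clean correspondence on odd-degree characters while tracking the interaction with unipotent supports and folding the generic/non-generic dichotomy at $n=2$ into one uniform scheme. Finally, \emph{canonicity} requires checking at each stage that every identification commutes with $\Gal(\bar\QQ/\QQ)$ and with field and diagonal automorphisms: Galois permutes the Jordan-decomposition labels in the standard way and fixes unipotent parts up to their known rationality; the wreath-extension choices are Galois-stable because the prescribed transposition value lies in $\{\pm1\}\subseteq\QQ$; and the Ennola-type matching $GL\leftrightarrow GU$ used in the non-generic base case, realised by $q\mapsto-q$ on character-degree polynomials, preserves fields of values. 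This equivariance is exactly what yields the equality of fields of values of character correspondents referred to in the abstract.
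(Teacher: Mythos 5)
Your opening reduction (binary decomposition of $n$, $P=\prod_i P_i$, $\NB_G(P)=\prod_i\NB_{G_i}(P_i)$, and the splitting of $\Irr_{2'}(G)$ along the corresponding Levi via the Jordan/Dipper--James labels) coincides with the paper's first step. After that your route diverges, and it contains a genuine gap exactly where you flag it: step (i), the existence of a canonical bijection $\Irr_{2'}(GL^{\varepsilon}_{2^a}(q))\leftrightarrow\Irr_{2'}(GL^{\varepsilon}_{2^{a-1}}(q)\wr\SSS_2)$ obtained by showing that the odd-index wreath subgroup $K$ is $2$-restriction good, is asserted as a plan rather than proved, and it is the entire content of the theorem in the $n=2^a$ case. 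Nothing in the paper (or in your sketch) establishes that an odd-degree $\chi$ restricted to $K$ has a unique odd-degree constituent: the unipotent radical of the parabolic with Levi $GL^{\varepsilon}_{2^{a-1}}(q)\times GL^{\varepsilon}_{2^{a-1}}(q)$ is no longer a single $L$-orbit of nontrivial characters (unlike the $GL_{n-1}\times GL_1$ case underlying Theorem A), the swap $\sigma$ does not lie in that parabolic, and the paper's own counterexamples ($SL_3(2)$ with every odd-index subgroup, and $\SSS_7$ restricted to $\SSS_5\times\SSS_2$) show that ``restriction good'' is a delicate property that must be verified, not presumed. The appeal to Theorem C is also inapplicable here: for $p=2$ it requires $G$ solvable, and $GL^{\varepsilon}_{2^a}(q)$ is not. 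So as written the proposal does not constitute a proof.

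For comparison, the paper sidesteps restriction entirely in Theorem E. It parametrizes both sides by the same combinatorial set $\Omega(n)=\prod_i\bigl(\tCk\times\HC(2^{n_i})\bigr)$: on the group side, every $\chi\in\Irr_{2'}(G)$ is $S(s_1,\lam_1)\circ\cdots\circ S(s_m,\lam_m)$ with each $\chi^{\lam_i}$ of odd degree, and the symmetric-group bijection of Theorem D/Corollary 4.4 converts each $\lam_i$ into a tuple of hook partitions; on the normalizer side, Kondrat'ev's $\NB_{G_i}(P_i)=\OB_{2'}(\ZB(G_i))\times P_i$ together with the wreath structure $P_i=R\wr Q$ and Giannelli's hook-partition labelling of $\Irr_{2'}$ of Sylow $2$-subgroups of symmetric groups produces the same set of labels (Lemma 5.1). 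Galois- and $D$-equivariance are then read off from the labels. Your inductive wreath scheme would, if step (i) could be completed, yield a correspondence partially realized by restriction --- a stronger and more attractive statement --- but that is precisely the part that is missing, and it is not a routine verification.
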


We will prove (see Theorem \ref{main-glu}) that
our canonical bijection in Theorem E
commutes with Galois action, and this will imply, for instance, that the fields of values of the odd-degree
 irreducible characters of $GL_n(q)$ and $GU_n(q)$, if $q$ is odd, are in bijection with the fields
of values of the odd-degree characters of $\NB_G(P)$ for $P \in \Syl_2(G)$. This does not happen in 
$GL_2(4)$ or $GL_2(8)$. Two other cases where there exist a canonical
correspondence for the McKay conjecture for $p=2$  are in solvable groups
\cite{Isa73}  and symmetric groups \cite{G}, see also Theorem \ref{sym}.
To illustrate the power of canonical maps, we can answer a question
of Gow, which was privately communicated to us.

\begin{corF}
The number of real-valued, irreducible characters of odd degree of $G = GL_n(q)$ and $GU_n(q)$, with $q$ any odd prime 
power, is equal to that of $\NB_G(P)$ for $P \in \Syl_2(G)$, which is 
$2^{n_1+n_2+ \ldots +n_r+r}$ if $n = 2^{n_1} + 2^{n_2} + \ldots +2^{n_r}$ is the $2$-adic decomposition of $n$. Furthermore,
 all such characters are rational-valued. 
\end{corF}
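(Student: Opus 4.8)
The plan is to deduce Corollary F directly from Theorem E together with its Galois-equivariant refinement (Theorem \ref{main-glu}), reducing everything to an explicit count on the local side $\NB_G(P)$. First I would observe that since the canonical bijection $\chi \mapsto \chi^\sharp$ between $\Irr_{2'}(G)$ and $\Irr_{2'}(\NB_G(P))$ commutes with the action of the absolute Galois group $\Gal(\overline{\QQ}/\QQ)$, in particular it commutes with complex conjugation; hence $\chi$ is real-valued if and only if $\chi^\sharp$ is real-valued, and more precisely $\QQ(\chi) = \QQ(\chi^\sharp)$. Thus the number of real-valued odd-degree irreducible characters of $G$ equals the number of real-valued odd-degree irreducible characters of $L := \NB_G(P)$, and the final ``rational-valued'' assertion will follow if I can show that every real-valued $\lambda \in \Irr_{2'}(L)$ is in fact rational-valued (equivalently, by the correspondence, that every real-valued $\chi\in\Irr_{2'}(G)$ is rational, but it is cleaner to argue on the local side).

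Next I would analyze the structure of $L = \NB_G(P)$ for $P \in \Syl_2(GL_n(q))$ or $\Syl_2(GU_n(q))$ with $q$ odd. Writing the $2$-adic decomposition $n = 2^{n_1} + \cdots + 2^{n_r}$, a Sylow $2$-subgroup of $GL_n(q)$ (resp. $GU_n(q)$) is a direct product of Sylow $2$-subgroups of $GL_{2^{n_i}}(q)$ (resp. $GU_{2^{n_i}}(q)$) embedded blockwise, and correspondingly $\NB_G(P)$ is (up to the precise description recorded in Section~\ref{SE}) a direct product $\prod_{i=1}^r \NB_{G_i}(P_i)$ where $G_i = GL_{2^{n_i}}(q)$ or $GU_{2^{n_i}}(q)$. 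Since $\Irr_{2'}$ of a direct product is the ``product'' of the $\Irr_{2'}$'s, and a character of a direct product is real-valued (resp. rational-valued) iff each tensor factor is, it suffices to treat the case $n = 2^m$ a single power of $2$. There one uses the known explicit description of $\NB_{G}(P)$: it is an extension of the Sylow $2$-subgroup $P$ by a $2$-group coming from the normalizer of a maximal torus / the relevant Weyl-group contribution, and in the prime-power case $|\Irr_{2'}(\NB_G(P))| = 2$ (this is consistent with $2^{n_1+\cdots+n_r+r}$ collapsing to $2^{m+1}$), the two odd-degree characters being linear characters that are manifestly rational-valued (they are characters of a quotient which is an elementary abelian $2$-group, or at worst have values in $\{\pm1\}$ that one checks are actually all $+1$ after accounting for which characters survive to odd degree). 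Multiplying back over the blocks, every odd-degree irreducible character of $\NB_G(P)$ is then a product of rational linear characters, hence rational, and the count of real-valued (= all of them, once rationality is known, but we only need the real ones) odd-degree characters is $\prod_i 2 = 2^r$ times $\prod_i 2^{n_i}$...

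Wait — more carefully: the asserted total $2^{n_1+\cdots+n_r+r}$ should equal $|\Irr_{2'}(G)| = |\Irr_{2'}(\NB_G(P))|$, so the claim is that \emph{all} odd-degree irreducible characters of $\NB_G(P)$ (equivalently of $G$) are real, hence rational. So the key structural input I need is that $\Irr_{2'}(\NB_G(P))$ consists entirely of rational-valued characters, together with the count $|\Irr_{2'}(\NB_G(P))| = 2^{n_1+\cdots+n_r+r}$; the former comes from the explicit block-product description of $\NB_G(P)$ and an inductive check in the prime-power case, the latter is a straightforward multiplicativity computation reducing $|\Irr_{2'}(GL_{2^m}(q))|$ (resp. $GU$) to a known value. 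The main obstacle, and the step requiring genuine work, is establishing the rationality of $\Irr_{2'}(\NB_G(P))$ in the prime-power case $n=2^m$: this needs the concrete generators-and-relations description of the Sylow normalizer in $GL_{2^m}(q)$ and $GU_{2^m}(q)$ (iterated wreath-product-like structure, plus the diagonal torus contribution), and a careful verification that the odd-degree irreducibles — which factor through a small $2$-quotient — take only the value $1$ off the kernel that they must, i.e. are trivial or nontrivial linear characters of an elementary abelian $2$-group and so rational. Everything else is bookkeeping with direct products and invoking Theorem E and Theorem \ref{main-glu}.
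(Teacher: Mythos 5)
Your opening reduction is the same as the paper's: by Theorem \ref{main-glu} the bijection $\chi\mapsto\chi^\sharp$ commutes with the Galois action and hence preserves fields of values, so the problem becomes counting the real-valued odd-degree characters of $\NB_G(P)$ and checking their rationality; by \eqref{norm2} this normalizer factors as a direct product over the $2$-adic blocks of $n$, reducing everything to the case $n=2^m$. Up to that point you agree with the intended argument, which at this stage simply cites Lemma \ref{power} and \eqref{norm2}.

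The proposal then goes wrong in a way that would make the proof fail. You assert that the total $2^{n_1+\cdots+n_r+r}$ ``should equal $|\Irr_{2'}(G)|=|\Irr_{2'}(\NB_G(P))|$,'' and hence that \emph{all} odd-degree irreducible characters of $\NB_G(P)$ (equivalently of $G$) are real, indeed rational. This is false. For $n=2^m$ one has $\NB_G(P)=\ZB(G)P$, and by Lemma \ref{power}(ii) the set $\Irr_{2'}(\NB_G(P))$ is in bijection with $\tCk\times\HC(2^m)$, so it has $(q-\kappa 1)\cdot 2^m$ elements: the center $\ZB(G)\cong C_\kappa$ of order $q-\kappa 1$ contributes a full group of linear characters, of which only two are self-conjugate. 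Already for $G=GL_1(5)\cong C_4$ there are four odd-degree (linear) characters but only two real ones, matching $2^{0+1}=2$ and not $4$. The correct local statement, which is exactly the content of Lemma \ref{power}(iii) and is what the paper invokes, is that precisely $2^{m+1}$ of the $(q-\kappa 1)2^m$ characters in $\Irr_{2'}(\NB_G(P))$ are real --- namely those trivial on $\OB_{2'}(\ZB(G))$ and whose $2$-part $\gamma$ of the central character satisfies $\gamma=\bar\gamma$ (two choices), with the hook-partition coordinate free --- and that each of these is in fact rational; multiplying over the $r$ blocks gives $\prod_i 2^{n_i+1}=2^{n_1+\cdots+n_r+r}$. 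So your ``key structural input'' (rationality of all of $\Irr_{2'}(\NB_G(P))$) must be replaced by an identification of \emph{which} odd-degree local characters are real, and the claimed equality with $|\Irr_{2'}(G)|$ must be dropped. (Your parenthetical claim that $|\Irr_{2'}(\NB_G(P))|=2$ in the prime-power case is likewise incorrect, and inconsistent with the $2^{m+1}$ appearing in the same sentence.)
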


\section{Restriction to a maximal parabolic subgroup}
Unless otherwise stated, we always assume that $p$ is a prime and $H$ is a subgroup of a finite group $G$. 
We begin with some simple observations.
Note that if the $p$-restriction good subgroup $H$ of $G$ satisfies $|\Irr_{p'}(G)| = |\Irr_{p'}(H)|$, then it is $p$-restriction canonical, by  the following lemma.
(The lemma also indicates a possible weakening of the notion of $p$-restriction good subgroups when one allows a multiplicity $>1$ of the 
$p'$-degree irreducible constituent).

\begin{lem}\label{trivial}
Let $H$ have $p'$-index in $G$ and $|\Irr_{p'}(G)| = |\Irr_{p'}(H)|$. Suppose that for every $\chi \in \Irr_{p'}(G)$, among the irreducible constituents of $\chi_H$ 
there is only one (but possibly with multiplicity $>1$), denoted by $\chi^*$, that has $p'$-degree.
Then the map $*:\chi \mapsto \chi^*$ is a 
bijection between $\Irr_{p'}(G)$ and $\Irr_{p'}(H)$.
\end{lem}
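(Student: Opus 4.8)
The plan is to prove this by a straightforward counting argument combined with the standard fact that odd-index (more generally, $p'$-index) subgroups detect $p'$-degree irreducible characters via restriction. First I would set up the basic observation: since $H$ has $p'$-index in $G$, for every $\chi \in \Irr_{p'}(G)$ the restriction $\chi_H$ has at least one irreducible constituent of $p'$-degree. Indeed, if every irreducible constituent $\psi$ of $\chi_H$ had degree divisible by $p$, then $\chi(1) = \chi_H(1) = \sum_\psi m_\psi \psi(1)$ would be divisible by $p$ as well (here I use that $[G:H]$ is prime to $p$ only implicitly — actually the point is simpler: $\chi(1)$ equals the sum of the degrees of the constituents of $\chi_H$ with multiplicity), contradicting $\chi \in \Irr_{p'}(G)$. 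By the hypothesis of the lemma, this $p'$-degree constituent is unique; call it $\chi^* \in \Irr_{p'}(H)$. Thus the map $* : \chi \mapsto \chi^*$ is well defined from $\Irr_{p'}(G)$ to $\Irr_{p'}(H)$.

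Next I would show that $*$ is injective. Suppose $\chi_1, \chi_2 \in \Irr_{p'}(G)$ with $\chi_1^* = \chi_2^* = \psi$. Then $\psi$ is a constituent of both $(\chi_1)_H$ and $(\chi_2)_H$, so by Frobenius reciprocity both $\chi_1$ and $\chi_2$ are constituents of $\psi^G = \Ind_H^G \psi$. Now I would use that $H$ has $p'$-index: the induced character $\psi^G$ has degree $[G:H]\psi(1)$, which is prime to $p$, and more to the point one shows that among the irreducible constituents of $\psi^G$ there is a unique one of $p'$-degree. This is the dual statement to the one just used and follows from the same kind of argument: write $\psi^G = \sum_i a_i \chi_i$ with $\chi_i \in \Irr(G)$; by reciprocity $a_i = [\psi^G, \chi_i] = [\psi, (\chi_i)_H] > 0$ exactly when $\psi$ is a constituent of $(\chi_i)_H$, and for such $\chi_i$ of $p'$-degree the hypothesis forces $\psi = \chi_i^*$ — hence $\chi_i$ is determined. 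Combined with the fact (just proven) that $\psi^G$ does have at least one $p'$-degree constituent (since $\psi^G(1) = [G:H]\psi(1)$ is prime to $p$), we get a unique one, forcing $\chi_1 = \chi_2$.

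Finally, injectivity of $*$ together with the hypothesis $|\Irr_{p'}(G)| = |\Irr_{p'}(H)|$ (both finite sets of equal cardinality) immediately gives that $*$ is a bijection, completing the proof.

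The only mildly delicate point — really the crux — is the assertion that for $\psi \in \Irr_{p'}(H)$ with $[G:H]$ prime to $p$, the induced character $\psi^G$ has a unique irreducible constituent of $p'$-degree. Existence is clear from the degree $[G:H]\psi(1)$ being a $p'$-number; uniqueness is where the running hypothesis of the lemma must be invoked (as sketched above), since a priori $\psi^G$ could have several $p'$-degree constituents $\chi_i$, but each such $\chi_i$ would then satisfy $\chi_i^* = \psi$, and the argument needs care to conclude these are all equal — in fact this is precisely the content of injectivity of $*$, so the existence-of-a-$p'$-constituent and injectivity steps must be interwoven rather than proven in strict sequence. I expect no serious obstacle here; everything is elementary character theory of $p'$-index subgroups, and this lemma is essentially a bookkeeping device that reduces the substantive theorems (A, B, C) to verifying the "$p$-restriction good" property plus a count of $p'$-degree characters.
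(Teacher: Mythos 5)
Your proposal assembles the right ingredients, but the injectivity step is circular as written --- a point you half-acknowledge in your final paragraph without actually resolving. To conclude $\chi_1 = \chi_2$ from $\chi_1^* = \chi_2^* = \psi$ you need $\psi^G$ to have a \emph{unique} irreducible constituent of $p'$-degree, and your argument for this is that any such constituent $\chi_i$ satisfies $\chi_i^* = \psi$, ``hence $\chi_i$ is determined.'' But the assertion that $\chi_i^*=\psi$ determines $\chi_i$ is precisely the injectivity of $*$ that you are trying to establish; nothing in the hypotheses rules out, a priori, two distinct $p'$-degree characters of $G$ whose restrictions to $H$ both have $\psi$ as their unique $p'$-degree constituent. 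Saying the steps ``must be interwoven'' names the problem but does not fix it.

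The fix is immediate and is exactly the paper's (three-line) proof: run your Frobenius reciprocity computation in the direction of \emph{surjectivity} instead. For $\rho \in \Irr_{p'}(H)$, the induced character $\rho^G$ has degree $|G:H|\,\rho(1)$ prime to $p$, hence has some constituent $\chi \in \Irr_{p'}(G)$; by reciprocity $\rho$ is then a $p'$-degree constituent of $\chi_H$, so the uniqueness hypothesis gives $\rho = \chi^*$. Thus $*$ is surjective, and the hypothesis $|\Irr_{p'}(G)| = |\Irr_{p'}(H)|$ (a finite count) forces $*$ to be a bijection --- injectivity comes for free from the cardinality assumption rather than being proved directly. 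All the material for this is already in your write-up; only the logical order needs to be reversed.
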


\begin{proof}
For any $\rho \in \Irr_{p'}(H)$, the induced character $\rho^G$ has $p'$-degree, 
so it contains a constituent $\chi \in \Irr_{p'}(G)$. By assumption, $\rho = \chi^*$. Thus $*$ is 
surjective, and so it is injective as well.
\end{proof}

The following result is well known, see for example \cite[22.4]{James}. 

\begin{lem}\label{binom1}
Let $a,b\in\Z_{\geq 0}$, $n=a+b$, and consider $2$-adic decompositions
$$n = \sum^t_{i=1}2^in_i,~~a = \sum^t_{i=1}2^ia_i,b = \sum^t_{i=1}2^ib_i.$$
Then the following statements are equivalent:
\begin{enumerate}[\rm(a)]
\item The binomial coefficient $n \choose a$ is odd.
\item $a_i+b_i = n_i$ for all $i$.
\item $0 \leq a_i \leq n_i$ for all $i$.
\end{enumerate}
\end{lem}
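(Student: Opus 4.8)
The plan is to reduce everything to two classical facts about binary expansions: Lucas' congruence $\binom{n}{a} \equiv \prod_i \binom{n_i}{a_i} \pmod 2$ (equivalently Kummer's carry criterion for $v_2\bigl(\binom{a+b}{a}\bigr)$), and the uniqueness of the base-$2$ representation of a nonnegative integer. With these in hand the three conditions become straightforward transcriptions of one another.

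First I would prove (a) $\Leftrightarrow$ (c). By Lucas' theorem, $\binom{n}{a}$ is odd if and only if $\binom{n_i}{a_i}$ is odd for every $i$. Since each $n_i, a_i \in \{0,1\}$, the coefficient $\binom{n_i}{a_i}$ equals $1$ in all cases except $n_i = 0$ and $a_i = 1$, where it equals $0$. Hence $\binom{n}{a}$ is odd precisely when it never happens that $n_i = 0$ while $a_i = 1$; that is, $a_i \leq n_i$ for all $i$, which together with the a priori bound $0 \leq a_i$ is exactly (c). Alternatively, the same argument runs through Kummer's theorem: $v_2\bigl(\binom{a+b}{a}\bigr)$ is the number of carries in the base-$2$ addition of $a$ and $b$, and ``no carries'' translates digit by digit into $a_i + b_i \leq 1$ with $n_i = a_i + b_i$, which is (b); this variant yields (a) $\Leftrightarrow$ (b) directly.

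To close the cycle I would prove (b) $\Leftrightarrow$ (c) by a short direct computation. If (b) holds then $a_i \leq a_i + b_i = n_i$ for all $i$, so (c) holds. Conversely, suppose (c) holds, so $0 \leq a_i \leq n_i \leq 1$ for all $i$; then each $n_i - a_i$ lies in $\{0,1\}$, so $\sum_i 2^i (n_i - a_i)$ is a legitimate base-$2$ expansion, and it represents the integer $n - a = b$. By uniqueness of the binary representation of $b$ we get $b_i = n_i - a_i$, i.e.\ $a_i + b_i = n_i$, which is (b).

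The argument is entirely routine; the only point that deserves attention is that in the step (c) $\Rightarrow$ (b) one must invoke uniqueness of the binary expansion of $b$, rather than merely comparing $\sum_i 2^i b_i = \sum_i 2^i (n_i - a_i)$ as integers. A reader content to quote the literature may instead simply cite \cite[22.4]{James} for the full equivalence.
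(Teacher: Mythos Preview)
Your argument is correct; the equivalences via Lucas' theorem (or Kummer's carry criterion) together with the uniqueness of binary expansions are exactly the standard way to verify this fact, and your care in invoking uniqueness for the step (c) $\Rightarrow$ (b) is well placed. The paper itself does not give a proof at all: it simply records the result as well known and refers the reader to \cite[22.4]{James}. So you have supplied precisely the details that the paper chose to suppress, and there is nothing to compare beyond that.
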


For $r\in\Z_{>0}$ we denote by $[r]_p$ the largest $p$-power that divides $r$; we also set
$[0]_p := \infty$.

\begin{cor}\label{binom2}
Let $a_1, \ldots, a_m\in\Z_{> 0}$ and $n = \sum^m_{i=1}a_i$. Suppose that $n!/\prod^m_{i=1}a_i!$ is odd. Then, by relabeling $a_1, \ldots,a_m$  suitably, we may assume that 
$$[n]_2 = [a_1]_2 < [a_2]_2 < \ldots < [a_m]_2.$$
\end{cor}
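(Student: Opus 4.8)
The plan is to reduce Corollary~\ref{binom2} to Lemma~\ref{binom1} by an induction on $m$, peeling off one summand at a time. First I would observe that the hypothesis $n!/\prod_{i=1}^m a_i!$ being odd is equivalent, by writing $n!/\prod a_i! = \binom{n}{a_1}\binom{n-a_1}{a_2}\cdots\binom{a_{m-1}+a_m}{a_{m-1}}$ as a product of binomial coefficients, to each of these binomial coefficients being odd. In particular $\binom{n}{a_1}$ is odd, so by Lemma~\ref{binom1} the $2$-adic digits of $a_1$ are dominated by those of $n$; and $a_2,\dots,a_m$ sum to $n-a_1$ with $(n-a_1)!/\prod_{i=2}^m a_i!$ odd, so we may apply the inductive hypothesis to $a_2,\dots,a_m$.

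The key point that makes the ordering come out is the following elementary fact about $2$-adic valuations: if $x+y = z$ and $\binom{z}{x}$ is odd, then (writing $v$ for the $2$-adic valuation $v(r) = \log_2 [r]_2$) we have $v(z) = \min(v(x),v(y))$ and moreover $v(x) \neq v(y)$ unless one of $x,y$ is $0$. Indeed, by Lemma~\ref{binom1}(b) the binary digits of $x$ and $y$ add with no carries to give those of $z$; the lowest set bit of $z$ therefore comes from exactly one of $x$ or $y$ (they cannot share it, or there would be a carry), which forces $v(x)\neq v(y)$ and $v(z) = \min(v(x),v(y))$. I would isolate this as the crux of the argument.

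Now I would run the induction. Relabel so that $[a_1]_2 = \min_i [a_i]_2$ is minimal, equivalently $v(a_1)$ is minimal. Since $\binom{n}{a_1}$ is odd and $n = a_1 + (a_2+\cdots+a_m)$, the crux gives $v(n) = \min\bigl(v(a_1),\,v(a_2+\cdots+a_m)\bigr)$; but $v(a_2+\cdots+a_m) \geq \min_{i\geq 2} v(a_i) \geq v(a_1)$ by minimality, so $v(n) = v(a_1)$, i.e.\ $[n]_2 = [a_1]_2$. Applying the inductive hypothesis to $a_2,\dots,a_m$ (whose multinomial coefficient is odd and which sum to $n - a_1$) we may relabel them so that $[n-a_1]_2 = [a_2]_2 < [a_3]_2 < \cdots < [a_m]_2$. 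It remains only to check $[a_1]_2 < [a_2]_2$: we have $v(a_2) = v(n-a_1)$, and since $\binom{n}{a_1}$ is odd with $n = a_1 + (n-a_1)$ and both summands nonzero, the crux gives $v(a_1) \neq v(n-a_1) = v(a_2)$; combined with $v(a_1) = v(n) = \min(v(a_1), v(n-a_1)) \leq v(a_2)$, this yields $v(a_1) < v(a_2)$, as desired. The base case $m=1$ is trivial (and $m=2$ is exactly the crux).

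The main obstacle is really just bookkeeping: making sure the relabeling is done consistently so that the single index extracted at each stage is the one of smallest valuation, and verifying the strict inequality $[a_1]_2 < [a_2]_2$ at the seam between the extracted term and the inductively ordered tail. Everything else is a direct translation of Lucas-type digit conditions (Lemma~\ref{binom1}) into statements about $2$-adic valuations; no genuinely hard step is expected.
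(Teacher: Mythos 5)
Your proof is correct and takes essentially the same route as the paper: the ``crux'' you isolate is exactly the paper's $m=2$ case (the no-carry digit argument from Lemma~\ref{binom1}, rephrased in terms of $2$-adic valuations), and the paper handles $m>2$ by the same peel-off-the-lowest-bit induction that you merely spell out in more detail. No gaps.
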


\begin{proof}
There is nothing to prove for $m = 1$. Assume $m = 2$, and consider the $2$-adic decompositions 
$$n = \sum^t_{i=k}2^in_i,~~a_1 = \sum^t_{i=0}2^ib_i,~~a_2 = \sum^t_{i=0}2^ic_i,$$ 
with $k \geq 0$ and $n_k=1$. By Lemma \ref{binom1} we have $b_i = c_i = 0$ for $0 \leq i < k$ and, moreover, 
relabeling $a_1$ and $a_2$ if necessary, we may assume that $(b_k,c_k) = (1,0)$. Thus 
$[n]_2 = [a_1]_2 < [a_2]_2$ as needed. Now the case $m > 2$ follows by an easy induction on $m$ using the case $m = 2$.
\end{proof}

\begin{cor}\label{binom3}
Suppose that $a,b\in\Z_{>0}$, $n=a+b$, and ${n \choose a}$ is odd. Then there is a unique $c\in\{a-1,a\}$ such that ${n-1 \choose c}$ is odd. Moreover, if we assume additionally that $[a]_2 \leq [b]_2$, then ${n-1 \choose a-1}$ is odd. 
\end{cor}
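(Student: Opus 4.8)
The plan is to split Corollary~\ref{binom3} into its two assertions and treat each with Pascal's identity and Lemma~\ref{binom1}.

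\emph{Existence and uniqueness.} Since $a,b\in\Z_{>0}$ we have $1\le a\le n-1$, so Pascal's identity $\binom{n}{a}=\binom{n-1}{a-1}+\binom{n-1}{a}$ holds with both summands on the right genuine binomial coefficients. Reducing modulo $2$, the hypothesis that $\binom{n}{a}$ is odd forces exactly one of $\binom{n-1}{a-1}$ and $\binom{n-1}{a}$ to be odd (a binary digit equal to $1$ is a sum of two binary digits in only one way up to order). This is precisely the claim that there is a unique $c\in\{a-1,a\}$ with $\binom{n-1}{c}$ odd.

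\emph{The case $[a]_2\le[b]_2$.} Here I would first apply Corollary~\ref{binom2} with $m=2$ to $n=a+b$: since $\binom{n}{a}=n!/(a!\,b!)$ is odd, after a possible relabeling of $a$ and $b$ we have $[n]_2=[a]_2<[b]_2$; but the hypothesis $[a]_2\le[b]_2$ (together with $[a]_2\ne[b]_2$, which the relabeling forces) shows that no relabeling is needed, so in fact $[n]_2=[a]_2<[b]_2$. Write $[a]_2=[n]_2=2^k$ and $[b]_2=2^\ell$, so $\ell>k$. Now pass to binary expansions. Since $2^k$ exactly divides $a$, the binary digit of $a$ in position $k$ is $1$ and its digits in positions $0,\dots,k-1$ vanish; hence $a-1$ has digit $0$ in position $k$, digit $1$ in positions $0,\dots,k-1$, and the same digits as $a$ in all positions $>k$. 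Because $\binom{n}{a}$ is odd, Lemma~\ref{binom1}(b) gives that in each position the digit of $n$ equals the sum of the corresponding digits of $a$ and $b$, and no carries occur since that sum equals a binary digit of $n$; as $2^\ell$ with $\ell>k$ divides $b$, the digits of $b$ in positions $0,\dots,k$ vanish, so $n$ and $a$ agree in positions $0,\dots,k$. Running the same ``subtract one'' analysis for $n$, we get that $n-1$ has digit $0$ in position $k$, digit $1$ in positions $0,\dots,k-1$, and the same digits as $n$ in positions $>k$.

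It then remains to compare $a-1$ with $n-1$ position by position and apply Lemma~\ref{binom1}(c) to $n-1=(a-1)+b$: in positions $0,\dots,k-1$ both numbers have digit $1$; in position $k$ both have digit $0$; and in positions $>k$ the digit of $a-1$ equals that of $a$, which is at most the corresponding digit of $n$, hence of $n-1$, because the digits of $a$ and $b$ sum to those of $n$. Thus every binary digit of $a-1$ is at most the corresponding digit of $n-1$, and Lemma~\ref{binom1}(c) yields that $\binom{n-1}{a-1}$ is odd. The only delicate point is the bookkeeping of binary borrows in passing from $a$ to $a-1$ and from $n$ to $n-1$; once the relation $[n]_2=[a]_2<[b]_2$ has been extracted from Corollary~\ref{binom2}, this is entirely mechanical, so I do not anticipate a genuine obstacle.
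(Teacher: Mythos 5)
Your proof is correct and uses the same ingredients as the paper's: Pascal's identity for the first claim, and Corollary~\ref{binom2} plus the digit criterion of Lemma~\ref{binom1} for the second. The only (minor) difference is that you verify directly that every binary digit of $a-1$ is at most the corresponding digit of $n-1$, hence $\binom{n-1}{a-1}$ is odd, whereas the paper instead rules out the alternative by showing $\binom{n-1}{a}$ is even and invoking the uniqueness of $c$; both routes are equally valid.
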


\begin{proof}
As ${n \choose a} = {n-1 \choose a-1} +{n-1 \choose a}$, the first claim follows.  For the second claim, let $c\in\{a-1,a\}$ be such that ${n-1 \choose c}$ is odd. By Corollary \ref{binom2}, the assumption $[a]_2 \leq [b]_2$ implies that 
$[n]_2=[a]_2 < [b]_2$. If $n$ is odd, then $a$ is odd and $b$ is even, and, by Lemma \ref{binom1}, $c$ is even, whence $c=a-1$.  
If $n$ is even, we consider the $2$-adic decompositions 
$$n = \sum^t_{i=k}2^in_i,~a = \sum^t_{i=0}2^ia_i,~b = \sum^t_{i=0}2^ib_i,$$ 
with $k \geq 1$ and $n_k = 1$.  By Lemma \ref{binom1}, 
$$(a_0, \ldots ,a_k) = (0, \ldots ,0,1),~(b_0, \ldots ,b_k) = (0, \ldots ,0,0),$$
and so ${n-1 \choose a}$ is even. Hence again we must have that $c= a-1$. 
\end{proof}

Recall that complex irreducible characters of $\SSS_n$ are labelled by partitions 
$\lam \vdash n$: $\chi = \chi^\lam$. By Theorem \ref{sn}, there is a canonical map
$\lam \mapsto \lam^*$ such that, if $\chi^\lam \in \Irr(\SSS_n)$ is of odd degree then 
$\chi^* = \chi^{\lambda^*}$ is the unique odd-degree irreducible constituent of $\chi_{\SSS_{n-1}}$. 

From now on, we fix an odd prime power $q$. For any $n \geq 1$, let $G = GL_n(q)$, with a natural 
module $V = \FF_q^n = \langle e_1, \ldots ,e_n \rangle_{\FF_q}$. As in \cite{KT2}, it is convenient for us to use 
the Dipper-James classification of complex irreducible characters of $G$, as described in \cite{JamesGL}. Namely, every $\chi\in \Irr(G)$ can be written uniquely, up to a permutation of the pairs 
$\{(s_1,\lam_1),\dots,(s_m,\lam_m)\}$, in the form 
\begin{equation}\label{gl1}
  \chi = S(s_1,\lam_1) \circ S(s_2,\lam_2) \circ \ldots \circ S(s_m,\lam_m).
\end{equation}
Here, $s_i \in \bFq^\times$ has degree $d_i$ over $\FF_q$, $\lam_i \vdash k_i$, 
$\sum^m_{i=1}k_id_i = n$, and the $m$ elements $s_i$ have pairwise distinct minimal polynomials over $\FF_q$. 
In particular, $S(s_i,\lam_i)$ is an irreducible character of $GL_{k_id_i}(q)$. 

Furthermore, there is a parabolic subgroup 
$P_\chi = U_\chi L_\chi$ of $G$ with Levi subgroup $L_\chi = GL_{k_1d_1}(q) \times \ldots GL_{k_md_m}(q)$ and 
unipotent radical $U_\chi$. The (outer) tensor product 
$$\psi := S(s_1,\lam_1) \otimes S(s_2,\lam_2) \otimes \ldots \otimes S(s_m,\lam_m)$$
is an $L_\chi$-character, and $\chi$ is obtained from $\psi$ via the Harish-Chandra induction $R^{G}_{L_\chi}$, 
i.e. we first inflate $\psi$ to a $P_\chi$-character and then induce it to $G$. 
The adjoint operation of Harish-Chandra restriction  $\SR^{G}_{L_\chi}$ 
takes any character $\rho$ of $G$, afforded by a $\CC G$-module $W$ to the $L_\chi$-character afforded by 
$W^{U_\chi}$, the fixed point subspace for $U_\chi$ on $W$. 

Let $P=UL$ be a maximal parabolic subgroup of $G$ with Levi subgroup $L = GL_1(q) \times GL_{n-1}(q)$ and 
the unipotent radical $U$. Conjugating suitably in 
$G$ and applying the transpose-inverse automorphism if necessary, we may assume that 
$P = \operatorname{Stab}_{G}(\langle e_1 \rangle_{\FF_q})$ and the second factor $GL_{n-1}(q)$ of $L$ fixes 
both $e_1$ and $\langle e_2, \ldots ,e_n \rangle_{\FF_q}$. 

\smallskip
Given the above notation, we can now prove the following theorem which implies Theorem A:

\begin{thm}\label{main-gl}
Let $q$ be an odd prime power, $n \geq 2$, $G  = GL_n(q)$, $P = \operatorname{Stab}_G(\langle e_1 \rangle_{\FF_q})$. 
Suppose that $\chi \in \Irr_{2'}(G)$. Then the following statements hold:

\begin{enumerate}[\rm(i)]
\item One can choose a label \eqref{gl1} for $\chi$ such that $s_i \in \FF_q^\times$ (so that $d_i = 1$) and 
$\chi^{\lam_i} \in \Irr_{2'}(\SSS_{k_i})$ for all $i=1,\dots,m$, and 
$$[n]_2 = [k_1]_2 < [k_2]_2 < \ldots < [k_m]_2;$$

\item $\chi_P = \chi^* + \Delta$, where $\chi^* \in \Irr_{2'}(P)$ and 
either $\Delta = 0$ or $\Delta$ is a $P$-character all irreducible constituents of which are of even degree;

\item $\chi^*$ is trivial on $U$, and equal to 
$S(s_1,(1)) \otimes (S(s_1,\lam^*_1) \circ S(s_2,\lam_2) \circ \ldots \circ S(s_m,\lam_m))$ 
when viewed as a character of $GL_1(q) \times GL_{n-1}(q)$;

\item If $n$ is odd, then the map $\chi \mapsto \chi^*$ is a bijection between $\Irr_{2'}(G)$ and $\Irr_{2'}(P)$.
\end{enumerate}
\end{thm}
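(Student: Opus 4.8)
The plan is to prove the four statements of Theorem~\ref{main-gl} in order, bootstrapping from Theorem~\ref{sn} (the symmetric group case) via Harish-Chandra theory. First I would establish (i). By the Dipper-James parametrization \eqref{gl1}, the degree of $\chi = S(s_1,\lam_1)\circ\cdots\circ S(s_m,\lam_m)$ factors through the Harish-Chandra induction, so $\chi(1) = [G:L_\chi]_{p'} \cdot \prod_i S(s_i,\lam_i)(1)$ where the index $[G:L_\chi]$ carries a multinomial coefficient $\frac{n!}{\prod_i(k_id_i)!}$ up to a power of $q$ (odd). Since $\chi$ has odd degree, each $S(s_i,\lam_i)(1)$ is odd and the multinomial coefficient is odd. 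Here one uses the known formula for $S(s,\lam)(1)$: it equals (a power of $q$) times a product of a ``$q$-analogue'' hook-length type factor; the $2$-part of this forces $d_i$ to be odd, and in fact forces $d_i=1$ and $\chi^{\lam_i}\in\Irr_{2'}(\SSS_{k_i})$ (this is where one invokes the analysis of odd-degree characters of $GL_n(q)$, essentially as in \cite{KT2}). The ordering $[n]_2=[k_1]_2<[k_2]_2<\cdots<[k_m]_2$ then comes from applying Corollary~\ref{binom2} to the odd multinomial coefficient $\frac{n!}{\prod_i k_i!}$.

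Next, for (ii) and (iii), the key is a Mackey-type / Harish-Chandra computation of the restriction $\chi_P$. Write $\chi = R^G_{L_\chi}(\psi)$ with $\psi = \bigotimes_i S(s_i,\lam_i)$. I would compute $\SR^G_L(\chi)$ first, where $L = GL_1(q)\times GL_{n-1}(q)$, using transitivity of Harish-Chandra restriction together with the combinatorial rule describing $\SR^{GL_k}_{GL_1\times GL_{k-1}}$ applied to $S(s,\lam)$ --- namely this peels off one box from the partition $\lam$ in all possible ways (a ``$q$-branching'' rule). Because of part~(i), $[k_1]_2\le[k_j]_2$ for all $j$, so by Corollary~\ref{binom3} the only way to keep the multinomial coefficient $\frac{(n-1)!}{(k_1-1)!\prod_{j\ge 2}k_j!}$ odd is to remove the box from the first factor $S(s_1,\lam_1)$; and among the ways to remove a box from $\lam_1$, only $\lam_1^*$ (from Theorem~\ref{sn}) gives an odd-degree constituent $\chi^{\lam_1^*}$ of $(\chi^{\lam_1})_{\SSS_{k_1-1}}$. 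Hence $\SR^G_L(\chi)$ has a unique odd-degree constituent, which is exactly $S(s_1,(1))\otimes\bigl(S(s_1,\lam_1^*)\circ S(s_2,\lam_2)\circ\cdots\circ S(s_m,\lam_m)\bigr)$. Since $\chi_P$ is the inflation of $\SR^G_L(\chi)$ to $P$ plus a part involving nontrivial $U$-characters, and any constituent of $\chi_P$ not trivial on $U$ has degree divisible by $|U|_2 = 1$... --- wait, $|U|$ is a power of $q$, hence odd, so that argument needs care; instead I would argue that the constituents of $\chi_P$ on which $U$ acts nontrivially are induced from proper subgroups in a way that makes their degrees even, or more cleanly, count degrees: $\chi(1) = \chi^*(1) + \Delta(1)$ with $\chi^*(1)$ odd forces $\Delta(1)$ even, and then one shows $\chi^*$ is the \emph{only} odd-degree constituent by the uniqueness just established for the $U$-trivial part together with a parity argument on the $U$-nontrivial part (here $\chi(1)/\chi^*(1)$ being controlled, or using that $\chi_P - \mathrm{Infl}(\SR^G_L\chi)$ is a genuine character all of whose constituents move $U$ and hence, by Clifford theory over $U\nor P$, have degree divisible by the orbit size, which is $q$-power; the real input is that $[G:P]_{p'}={n\choose 1}_{p'}$ and the parity bookkeeping closes up). This parity/uniqueness argument for the $U$-nontrivial part is the step I expect to be the main obstacle, since it requires knowing that no ``new'' odd-degree constituent sneaks in from the non-cuspidal part of the restriction.

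Finally, (iv) follows formally: assume $n$ odd. By parts (ii)--(iii), $P$ is a $2$-restriction good subgroup, and the map $\chi\mapsto\chi^*$ is well-defined into $\Irr_{2'}(P)$. To conclude it is a bijection, by Lemma~\ref{trivial} it suffices to show $|\Irr_{2'}(G)| = |\Irr_{2'}(P)|$, which in turn (again by Lemma~\ref{trivial} applied in reverse, or directly) follows once we check that each $\rho\in\Irr_{2'}(P)$ arises as $\chi^*$ for some $\chi$; equivalently, that $\rho^G$ has an odd-degree constituent --- which is automatic since $[G:P]$ is odd when $n$ is odd (as ${n\choose 1}=n$ is odd). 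Thus $*$ is surjective, hence bijective. So the structure is: (i) is character-degree arithmetic reducing to Corollary~\ref{binom2} and the classification of odd-degree characters of $GL_n(q)$; (ii)--(iii) are a Harish-Chandra restriction computation reducing the partition bookkeeping to Theorem~\ref{sn} and Corollary~\ref{binom3}; (iv) is the soft counting argument of Lemma~\ref{trivial} using that $n$ is odd.
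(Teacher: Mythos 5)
Your overall strategy --- degree arithmetic for (i), a Harish-Chandra/Mackey computation of $\SR^G_L(\chi)$ for (ii)--(iii), and a counting argument for (iv) --- is the same as the paper's, and your treatment of (i) and of the $U$-trivial part of the restriction (in particular the use of Corollary~\ref{binom3} to kill any contribution from the factors $S(s_j,\lam_j)$ with $j\geq 2$, and of Theorem~\ref{sn} to isolate $\lam_1^*$) is essentially the paper's argument. But the two steps you leave open are genuine gaps, not bookkeeping. For the constituents of $\chi_P$ that are nontrivial on $U$, neither of your suggested arguments works: $|U|=q^{n-1}$ is odd, and the relevant orbit size is not a $q$-power. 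The correct (and short) argument is that $L\cong GL_1(q)\times GL_{n-1}(q)$ acts \emph{transitively} on the $q^{n-1}-1$ nontrivial irreducible characters of $U\cong\FF_q^{n-1}$, and $q^{n-1}-1$ is \emph{even} since $q$ is odd and $n\geq 2$; hence by Clifford theory every irreducible character of $P$ lying over a nontrivial character of $U$ has degree divisible by $q^{n-1}-1$ and so has even degree. This is exactly what reduces (ii)--(iii) to the computation of $\SR^G_L(\chi)$, and without it your parity bookkeeping does not close.

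The second gap is in (iv). Surjectivity of $*$ does follow from $2\nmid[G:P]$ as you say, but ``surjective, hence bijective'' is a non sequitur: surjectivity only gives $|\Irr_{2'}(G)|\geq|\Irr_{2'}(P)|$, and Lemma~\ref{trivial} \emph{assumes} the equality of cardinalities rather than producing it. The paper supplies this equality by a substantive external input: for $n=2k+1$ one can choose a Sylow $2$-subgroup $S$ of $G$ inside $L$ with $\NB_G(S)=\NB_L(S)$, and then Olsson's verification of the McKay conjecture for $GL_n(q)$ at $p=2$ gives $|\Irr_{2'}(G)|=|\Irr_{2'}(\NB_G(S))|=|\Irr_{2'}(L)|=|\Irr_{2'}(P)|$. (Alternatively one could prove injectivity directly from the explicit formula in (iii), since for $n$ odd one has $k_1$ odd and $\lam_1\mapsto\lam_1^*$ is then a bijection by Theorem~\ref{sn}; but some such argument must be made.) A smaller point of rigor: your ``$q$-branching rule'' for $\SR^{GL_{k}(q)}_{GL_1(q)\times GL_{k-1}(q)}(S(s,\lam))$ needs justification --- the paper obtains it from the Mackey formula for Harish-Chandra functors together with the Howlett--Lehrer Comparison Theorem, which transports the computation to the ordinary branching $(\chi^{\lam})_{\SSS_{k-1}}$, and from the fact that Harish-Chandra restriction preserves Lusztig series (so that the removed node stays attached to the same $s_1$).
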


Note that in \ref{main-gl}(iii), the symbol $S(s_1,\lam^*_1)$ is considered void if $k_1 = 1$. We proceed in a series of lemmas.

\begin{lem}\label{order}
Statement {\rm (i)} of Theorem \ref{main-gl} holds.
\end{lem}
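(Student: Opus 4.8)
I want to prove Lemma~\ref{order}, i.e. statement (i) of Theorem~\ref{main-gl}: every odd-degree $\chi \in \Irr(GL_n(q))$ admits a Dipper--James label \eqref{gl1} all of whose ``semisimple parts'' $s_i$ lie in $\FF_q^\times$, all of whose unipotent parts $\chi^{\lam_i}$ have odd degree, and with the $2$-adic valuations of the $k_i$ strictly increasing (after relabeling, with $[k_1]_2 = [n]_2$). The strategy is to use the degree formula for the character $\chi = S(s_1,\lam_1) \circ \cdots \circ S(s_m,\lam_m)$, which by \cite{JamesGL} is a product of a Gaussian-binomial-type factor coming from the Harish-Chandra induction $R^G_{L_\chi}$ together with the degrees of the individual $S(s_i,\lam_i)$ inside $GL_{k_id_i}(q)$. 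The key point is to read off, from $\chi(1)$ being odd, strong $2$-divisibility constraints on each of the three sources of $2$'s in that formula.

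\textbf{Step 1: the $s_i$ must have odd degree $d_i = 1$.} The Harish-Chandra induction contributes a factor which is (up to an odd cyclotomic-polynomial part) the ``index'' $|G|_{q'}/\prod_i |GL_{k_id_i}(q)|_{q'}$ times appropriate $q$-power corrections. More to the point, the degree of $S(s_i,\lam_i)$ as a character of $GL_{k_id_i}(q)$ is divisible by the unipotent-character degree pattern evaluated at $q^{d_i}$, and the Weyl-group index forces a factor of $q^{d_i}-1$-type contributions. Since $q$ is odd, $q^{d_i} - 1$ (in fact $q^{d_i}+1$ when relevant for the twisted/unitary situation, but here we are in the split case) is even, and $q^{d_i}-1 \equiv 0 \pmod{2^{[d_i]_2}\cdot(\text{something})}$ — in particular if $d_i > 1$ the factor $\Phi_{d_i}(q)$ or $q^{d_i-1} + \cdots + 1$ is even, contributing a $2$ to $\chi(1)$. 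Concretely: the degree of $S(s,\lam)$ for $s$ of degree $d$ and $\lam \vdash k$ equals $\chi^\lam(1)_{q^d}$-adjusted times $\prod (q^{di}-1)/(\text{unipotent degree denominator})$, and one checks that $d > 1$ forces $\chi(1)$ even. I would quote or re-derive the relevant divisibility from \cite{JamesGL} (or \cite{KT2}, where this ``all $s_i$ rational'' reduction for odd-degree characters is surely already used). So $d_i = 1$ for all $i$, hence each $S(s_i,\lam_i)$ is a character of $GL_{k_i}(q)$ of the form $\widehat{s_i}\otimes$(unipotent), with degree equal to the degree of the unipotent character labelled $\lam_i$, i.e. $\chi^{\lam_i}(1)_q := \prod_{\text{hooks}} \frac{q^n-1}{q^{h}-1}$-type $q$-analogue; and since $q$ is odd this is odd iff $\chi^{\lam_i}(1)$ is odd. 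That gives $\chi^{\lam_i} \in \Irr_{2'}(\SSS_{k_i})$.

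\textbf{Step 2: the Harish-Chandra factor and the $2$-adic condition.} Now $d_i = 1$ for all $i$, $\sum k_i = n$, and the remaining factor in $\chi(1)$ coming from $R^G_{L_\chi}$ is, up to an odd power of $q$ and an odd product of cyclotomic-polynomial values, the multinomial-type Gaussian coefficient $\genfrac[]{0pt}{}{n}{k_1,\dots,k_m}_q$. At $q$ odd this Gaussian multinomial is odd iff the ordinary multinomial $n!/\prod k_i!$ is odd (each factor $\frac{q^j-1}{q^1-1} = 1 + q + \cdots + q^{j-1} \equiv j \pmod 2$, so $2$-divisibility of the $q$-factorial matches that of the ordinary factorial). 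Hence $n!/\prod_i k_i!$ is odd. Now Corollary~\ref{binom2} applies verbatim: relabeling the pairs $(s_i,\lam_i)$, we may assume $[n]_2 = [k_1]_2 < [k_2]_2 < \cdots < [k_m]_2$, which is exactly the asserted ordering. Combining Steps 1 and 2 gives all three clauses of (i).

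\textbf{Main obstacle.} The conceptual content is light; the real work is Step~1 — pinning down the exact $2$-adic valuation of $\deg S(s,\lam)$ in terms of $d = \deg(s)$ and $\lam$, and being sure that $d>1$ genuinely forces an even degree (one must handle small cases and make sure no ``miraculous cancellation'' of $2$'s occurs between the $q^d - 1$ factors and the unipotent denominator). I expect this to reduce to the clean statement that $[\,\deg S(s,\lam)\,]_2 = [\,\chi^\lam(1)\,]_2 \cdot (\text{a factor that is} > 1 \text{ whenever } d>1)$, which should already be implicit in the degree formulas of \cite{JamesGL}; alternatively one can invoke the known parametrization of $\Irr_{2'}(GL_n(q))$ for $q$ odd (e.g. as in \cite{KT2}) which already isolates the $d_i = 1$, $\chi^{\lam_i}$ odd-degree data, and then only Step~2 (the Gaussian-multinomial-to-multinomial reduction plus Corollary~\ref{binom2}) remains. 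Either way, the $2$-adic ordering in the conclusion is exactly Corollary~\ref{binom2}, so no new combinatorics is needed.
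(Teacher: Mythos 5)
Your proposal is correct and follows essentially the same route as the paper: odd degree of $\chi$ forces each $S(s_i,\lam_i)$ to have odd degree (whence $d_i=1$, for which the paper simply cites \cite[Lemma 5.7(ii)]{KT2}, exactly the reduction you anticipate) and forces $|G:P_\chi|$, hence $n!/\prod_i k_i!$, to be odd, after which Corollary~\ref{binom2} and the mod-$2$ congruence between $\deg S(s,\lam)$ and $\chi^\lam(1)$ finish the argument. The only cosmetic difference is that for the parity of the index you give the elementary $1+q+\cdots+q^{j-1}\equiv j \pmod 2$ computation, where the paper instead invokes \cite[Lemma 4.4(i)]{NT1}.
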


\begin{proof}
Since the degree of $\chi$ is odd, so are the degrees of each $S(s_i,\lam_i)$, which implies that 
each $d_i = \deg(s_i) = 1$, for example by \cite[Lemma 5.7(ii)]{KT2}, i.e. $s_i \in \FF_q^\times$ for all $i=1,\dots,m$. Next, we also must have that $|G:P_\chi|$ is odd, which implies by 
a repeated application of \cite[Lemma 4.4(i)]{NT1} that $n!/\prod^m_{i=1}k_i!$ is odd. So we may assume by 
Corollary \ref{binom2} that $[n]_2 = [k_1]_2 < [k_2]_2 < \ldots < [k_m]_2$. Finally, it is well known (and follows from 
the hook formula for the degree of unipotent characters of $G$, see \cite[(1.15)]{FS}) that 
\begin{equation}\label{gl2}
  \chi^\lam(1) \equiv \deg(S(s,\lam)) (\mod 2)
\end{equation}  
if $s \in \FF_q^\times$, and so we conclude that $\chi^{\lam_i} \in \Irr_{2'}(\SSS_{k_i})$.
\end{proof}

\begin{lem}\label{hc}
Let $X = X_1 \times X_2$, where $X_1 \cong GL_m(q)$, $X_2 \cong GL_n(q)$, $P_1 = UL_1$ a parabolic subgroup 
of $X_1$ with unipotent radical $U$ and Levi subgroup $L_1$,  and let $L = L_1 \times X_2$.
\begin{enumerate}[\rm(i)]
\item If $\al$ is a character of $X_1$ and $\beta$ is a character of $X_2$, then 
$\SR^X_L(\al \otimes \beta) = \SR^{X_1}_{L_1}(\al) \otimes \beta$.
\item If $\gam$ is a character of $L_1$ and $\delta$ is a character of $X_2$, then 
$R^X_L(\gam \otimes \delta) = R^{X_1}_{L_1}(\gam) \otimes \delta$.
\end{enumerate}
\end{lem}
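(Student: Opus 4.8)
\textbf{Proof proposal for Lemma \ref{hc}.}

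The plan is to argue this directly from the definitions of Harish-Chandra induction and restriction, using the fact that everything factors through the direct product $X = X_1 \times X_2$. First I would set up notation: write $P_1 = U L_1 \leq X_1$, and observe that $P := P_1 \times X_2 = U(L_1 \times X_2) = UL$ is a parabolic subgroup of $X$ with unipotent radical $U$ (now viewed inside $X$, acting trivially on the $X_2$-factor) and Levi complement $L = L_1 \times X_2$. This is the parabolic subgroup implicit in the statement, so both sides of (i) and (ii) are legitimate Harish-Chandra operations. The key structural point, to be stated explicitly, is that $U$ commutes with $X_2$ inside $X$ and that $P = P_1 \times X_2$ as an internal direct product.

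For part (i), let $W$ be a $\CC X_1$-module affording $\al$ and $W'$ a $\CC X_2$-module affording $\beta$, so that $W \otimes W'$ affords $\al \otimes \beta$ as a $\CC X$-module. By definition $\SR^X_L(\al \otimes \beta)$ is the $L$-character afforded by the fixed-point space $(W \otimes W')^U$. Since $U$ acts trivially on $W'$, we have $(W \otimes W')^U = W^U \otimes W'$ as $L = L_1 \times X_2$-modules, and $W^U$ affords $\SR^{X_1}_{L_1}(\al)$ by definition. Hence $\SR^X_L(\al \otimes \beta) = \SR^{X_1}_{L_1}(\al) \otimes \beta$, which is exactly (i). (If one prefers to present Harish-Chandra restriction as a composite of ordinary restriction to $P$ and the "$U$-fixed points" functor $\CC P \to \CC L$, the same one-line computation goes through.)

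For part (ii), I would use that Harish-Chandra induction is "inflate to the parabolic, then induct": $R^X_L(\gam \otimes \delta) = \Ind_P^X\bigl(\widetilde{\gam \otimes \delta}\,\bigr)$, where $\widetilde{\gam \otimes \delta}$ is the inflation of $\gam \otimes \delta$ from $L = L_1 \times X_2$ to $P = P_1 \times X_2$ through the projection $P \twoheadrightarrow L$ with kernel $U$. Because $P = P_1 \times X_2$ and $U \leq P_1$, this inflated character is precisely $\widetilde{\gam} \otimes \delta$, where $\widetilde{\gam}$ is the inflation of $\gam$ from $L_1$ to $P_1$. Now I invoke the fact that induction commutes with outer tensor products over a direct product: for subgroups $P_1 \leq X_1$ and $X_2 \leq X_2$ one has $\Ind_{P_1 \times X_2}^{X_1 \times X_2}(\widetilde{\gam} \otimes \delta) = \Ind_{P_1}^{X_1}(\widetilde{\gam}) \otimes \Ind_{X_2}^{X_2}(\delta) = \Ind_{P_1}^{X_1}(\widetilde{\gam}) \otimes \delta$. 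The right-hand side is $R^{X_1}_{L_1}(\gam) \otimes \delta$ by the definition of $R^{X_1}_{L_1}$, giving (ii). Alternatively, (ii) can be deduced from (i) by adjunction (Frobenius reciprocity / the fact that $R$ and $\SR$ are adjoint) once one knows the claim on enough characters, but the direct computation above is cleaner.

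There is essentially no serious obstacle here; the only thing requiring mild care is bookkeeping around which parabolic of $X$ is being used and the verification that inflation through $P \to L$ restricts correctly to the first factor — i.e. that $U$, as a subgroup of $X$, really is the unipotent radical of $P = P_1 \times X_2$ and acts trivially on the $X_2$-part. Once that identification $P = P_1 \times X_2$ is recorded, both statements are immediate from (a) compatibility of fixed-point functors with outer tensor products and (b) compatibility of induction with outer tensor products over direct products, both of which are standard.
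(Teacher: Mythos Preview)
Your proof is correct and essentially the same as the paper's: part (i) is verbatim the module-level argument $(W \otimes W')^U = W^U \otimes W'$, and for part (ii) both you and the paper inflate to $P = P_1 \times X_2$ and then pass the induction through the direct product. The only cosmetic difference is that the paper justifies the last step via the push--pull identity $((\tilde\gam \otimes 1_{X_2}) \cdot (1_{X_1} \otimes \delta)_P)^X = (\tilde\gam \otimes 1_{X_2})^X \cdot (1_{X_1} \otimes \delta)$, whereas you invoke the equivalent fact that $\Ind_{P_1 \times X_2}^{X_1 \times X_2}(\tilde\gam \otimes \delta) = \Ind_{P_1}^{X_1}(\tilde\gam) \otimes \delta$.
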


\begin{proof}
(i) Let $\al$, respectively $\beta$, be afforded by a $\CC X_1$-module $A$, respectively a $\CC X_2$-module $B$. Then 
$\SR^X_L(\al \otimes \beta)$ is afforded by the $L$-module $(A \otimes B)^U = A^U \otimes B$ and so equal to 
$\SR^{X_1}_{L_1}(\al) \otimes \beta$.

\smallskip
(ii) Inflate $\gam$ to the character $\tilde\gam$ of $P_1$ using $P_1/U \cong L_1$, and inflate $\gam \otimes \delta$ 
to the character $\rho = \tilde\gam \otimes \delta$ of $P = P_1 \times X_2$ using $P/U \cong L$. Then
$$R^X_L(\gam \otimes \delta) = \rho^X = \left((\tilde\gam \otimes 1_{X_2}) \cdot (1_{X_1} \otimes \delta)_P\right)^X=$$
$$= (\tilde\gam \otimes 1_{X_2})^X \cdot (1_{X_1} \otimes \delta) 
    = (R^{X_1}_{L_1}(\gam) \otimes 1_{X_2}) \cdot (1_{X_1} \otimes \delta) = R^{X_1}_{L_1}(\gam) \otimes \delta.$$
\end{proof}

\begin{pro}\label{restr}
Statements {\rm (ii)} and {\rm (iii)} of Theorem \ref{main-gl} hold.
\end{pro}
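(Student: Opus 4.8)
The plan is to prove statements (ii) and (iii) of Theorem~\ref{main-gl} simultaneously by a reduction to the symmetric group via Harish-Chandra theory. By Lemma~\ref{order}, we may fix a label \eqref{gl1} for $\chi$ with all $s_i\in\FF_q^\times$, $\chi^{\lam_i}\in\Irr_{2'}(\SSS_{k_i})$, and $[n]_2=[k_1]_2<[k_2]_2<\dots<[k_m]_2$. In particular $k_1$ is odd, so $[k_1]_2\le[k_i]_2$ for $i\ge2$, and $\chi$ is Harish-Chandra induced from $\psi=S(s_1,\lam_1)\otimes\dots\otimes S(s_m,\lam_m)$ on the Levi $L_\chi=GL_{k_1}(q)\times\dots\times GL_{k_m}(q)$. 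The key computation is to decompose $\chi_P$, where $P=\Stab_G(\langle e_1\rangle)$, using the ``Mackey-type'' formula relating restriction to a parabolic and Harish-Chandra induction/restriction.

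The main step is as follows. First I would compute $\SR^G_L(\chi)$ for $L=GL_1(q)\times GL_{n-1}(q)$: by transitivity of Harish-Chandra restriction and the standard formula for $\SR\circ R$ in terms of double cosets of parabolics (equivalently, by restricting the partition $n=k_1+\dots+k_m$ by one box in each possible summand), one gets
\begin{equation*}
\SR^G_L(\chi)=\sum_{j}\,S(s_j,(1))\otimes\Bigl(S(s_1,\lam_1)\circ\dots\circ \SR(S(s_j,\lam_j))\circ\dots\circ S(s_m,\lam_m)\Bigr),
\end{equation*}
where $\SR(S(s_j,\lam_j))$ denotes Harish-Chandra restriction of $S(s_j,\lam_j)$ from $GL_{k_j}(q)$ to $GL_1(q)\times GL_{k_j-1}(q)$, projected onto its $S(s_j,(1))\otimes(-)$ part. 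Now by \eqref{gl2} and the branching rule for symmetric groups (Theorem~\ref{sn}), the restriction $\SR(S(s_j,\lam_j))$ has a unique odd-degree irreducible constituent in its relevant component precisely when $S(s_j,\lam^*_j)$ exists and is of odd degree, i.e.\ when $\chi^{\lam_j}_{\SSS_{k_j-1}}$ has one. The parity bookkeeping then comes from Lemma~\ref{binom1} and Corollary~\ref{binom3}: among the summands indexed by $j$, one checks that the outer product of odd-degree pieces has odd degree only for $j=1$, because $[k_1]_2\le[k_j]_2$ forces (via Corollary~\ref{binom3}) that removing a box from $\lam_1$ keeps things odd while removing a box from any $\lam_j$ with $j\ge2$ introduces an even binomial factor $\binom{n-1}{\ \cdot\ }$ in the degree of the Harish-Chandra induced character. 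Concretely, for $j\ge2$ every constituent of $S(s_j,(1))\otimes(S(s_1,\lam_1)\circ\dots\circ\SR(S(s_j,\lam_j))\circ\dots)$ has degree divisible by $2$ because $\binom{n-1}{k_1}$ is even (by Lemma~\ref{binom1}, using $[k_1]_2<[k_j]_2$ and $n$ possibly even). This isolates a single odd-degree constituent
\begin{equation*}
\chi^*=S(s_1,(1))\otimes\bigl(S(s_1,\lam^*_1)\circ S(s_2,\lam_2)\circ\dots\circ S(s_m,\lam_m)\bigr),
\end{equation*}
viewed as a character of $L=GL_1(q)\times GL_{n-1}(q)$, inflated trivially across $U$; and the degree of $\chi^*$ is odd by \eqref{gl2} and Corollary~\ref{binom3}. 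Finally, since $\SR^G_L(\chi)$ records exactly the $U$-fixed points of $\chi_P$, and any other constituent of $\chi_P$ is induced from a proper subgroup of $P$ strictly containing $U$, hence has degree divisible by a nontrivial power of $q$ (in particular even), the ``$\Delta$ has only even-degree constituents'' assertion follows. This proves (ii) and (iii).

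For the injectivity/surjectivity needed in (iv) I would invoke Lemma~\ref{trivial}: when $n$ is odd one has $|\Irr_{2'}(GL_n(q))|=|\Irr_{2'}(P)|$ by a counting argument (the odd-degree characters of $GL_n(q)$ are parametrized by the data in (i), and unwinding the parametrization of $\Irr_{2'}(P)$ via Clifford theory over $U$ and the odd-order group $GL_1(q)$ together with the inductive description of $\Irr_{2'}(GL_{n-1}(q))$ gives the same count), so the map $\chi\mapsto\chi^*$ is automatically a bijection. The main obstacle I anticipate is making the Harish-Chandra restriction computation of $\SR^G_L(\chi)$ fully rigorous — in particular controlling the ``cross terms'' where a box is removed from $\lam_j$ for $j\ge2$ and showing every resulting irreducible constituent really has even degree, and separately verifying that the non-$U$-trivial part $\Delta$ of $\chi_P$ contributes only even degrees; the symmetric-group parity input (Theorem~\ref{sn}) and the $2$-adic combinatorics (Lemmas~\ref{binom1}, Corollary~\ref{binom3}) should handle the bookkeeping once the restriction formula is in place. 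I would also need the fact, likely to be recorded separately, that for an odd-order group like $GL_1(q)$ the outer product with $S(s_1,(1))$ does not affect parity, which is immediate.
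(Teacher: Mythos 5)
Your overall strategy is the one the paper uses: reduce the question to the $U$-trivial part $\SR^G_L(\chi)$, decompose $\SR^G_L\circ R^G_{L_\chi}$ by a Mackey/branching formula into terms indexed by which block loses a box, identify the surviving odd-degree constituent via the Howlett--Lehrer comparison with $\SSS_{k_1}$-branching and Theorem~\ref{sn}, and kill the terms with $j\geq 2$ by the $2$-adic binomial combinatorics (Corollary~\ref{binom3} together with the fact that an odd-degree constituent of a Harish-Chandra induced character forces the relevant parabolic to have odd index). The only organizational difference is that you work with the full $m$-block Levi and an $m$-term branching formula, whereas the paper groups the blocks as $n=k_1+(n-k_1)$ and applies the two-term Mackey formula of Dipper--Fleischmann to $R^G_M(\al\otimes\beta)$ with $M=GL_{k_1}(q)\times GL_{n-k_1}(q)$; this grouping is exactly what lets one quote a clean two-cosets Mackey statement and dispose of all $j\geq 2$ terms at once via the single even binomial coefficient $\binom{n-1}{k_1}$, which addresses the rigor concern you raise at the end. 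You should also note explicitly that the odd constituent occurs with multiplicity one (it does, since the branching multiplicity in $\SSS_{k_1}$ is one and the outer product $S(s_1,\lam_1^*)\circ S(s_2,\lam_2)\circ\cdots\circ S(s_m,\lam_m)$ is irreducible), as the definition of $2$-restriction good requires this.

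There is, however, one step whose justification is wrong as written: your claim that every irreducible constituent of $\chi_P$ nontrivial on $U$ ``has degree divisible by a nontrivial power of $q$ (in particular even).'' Since $q$ is odd, divisibility by a power of $q$ would not imply evenness; and the degrees in question are in fact not divisible by $q$ in general. The correct argument, which is the one the paper gives, is via Clifford theory over the abelian normal subgroup $U$: the Levi $L$ acts transitively on the $q^{n-1}-1$ nontrivial irreducible characters of $U$, so any $\varphi\in\Irr(P)$ lying over a nontrivial character of $U$ has degree divisible by the orbit length $q^{n-1}-1$, which is even precisely because $q$ is odd and $n\geq 2$. With that repair (and the multiplicity-one remark), your argument matches the paper's proof.
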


\begin{proof}
(a) First we note that, since $L$ acts transitively on the $q^{n-1}-1$ non-principal irreducible characters of $U$ and $q$ is odd, 
any irreducible character of $P$ which is nontrivial on $U$ has even degree. Hence all the odd-degree irreducible constituents
of $\chi_P$ are contained in $\SR^G_L(\chi)$. Next, by Lemma \ref{order},  we already know that 
$s_i \in \FF_q^\times$ for all $i=1,\dots,m$. 

Suppose that $m = 1$. Then $\chi$ is a unipotent character of $G$  tensored with a linear character; in particular, 
$\chi$ belongs to the principal series. By the Comparison Theorem \cite[Theorem 5.9]{HL} (see also 
\cite[Theorem 5.1]{C} for the case of the principal series), the computation of $\SR^G_L(\chi)$ can be replaced by the 
computation of $(\chi^{\lam_1})_{\SSS_{n-1}}$, where we identify $\SSS_n$, respectively $\SSS_{n-1}$, with the Weyl group of
$\GC$, respectively of $\LC$. (See also \cite[Proposition (1C)]{FS} for the explicit formula in the case of $G$.)
Applying Theorem \ref{sn} and \eqref{gl2}, we are done in this case.

\smallskip
(b) Now we will assume $m \geq 2$ and set $a= k_1$, $b = n-k_1$, where $k_1, \ldots ,k_m$ satisfy \ref{main-gl}(i); in particular,
\begin{equation}\label{ab}
  [a]_2 < [b]_2,
\end{equation}   
and so $a \neq b$. Let $M=GL_a(q)\times GL_b(q)$ so that $\chi = R^G_M(\alpha \otimes \beta)$, where 
$$\alpha = S(s_1,\lam_1),\quad\beta = S(s_2,\lam_2) \circ \ldots \circ S(s_m,\lam_m).$$
This follows by the transitivity of the Harish-Chandra induction \cite[Proposition 4.7]{DM}. 
By the Mackey formula for Harish-Chandra induction and restriction (see e.g. \cite[Theorem 1.14]{DF}),
\begin{equation}\label{gl3}
\begin{split}
\SR^G_L\chi=  \SR^G_L ( R^G_M(\alpha\otimes \beta) )
=
  &R^L_{L \cap M} \left( \SR^{M}_{L \cap M}(\al\otimes \beta)\right) \\
  &\oplus 
   R^L_{L \cap wMw^{-1}} \left(\operatorname{conj}_w( \SR^{M}_{M\cap w^{-1}L w} (\al\otimes\beta))\right),
   \end{split}
\end{equation}  
where $w$ is the permutation matrix corresponding to the cycle $(1,2,\dots,a+1)$ and $\operatorname{conj}_w$
denotes the conjugation by $w$.

\smallskip
(c)  Here we consider the first summand in the right hand side of \eqref{gl3}.  By Lemma \ref{hc}(i),
$$\SR^M_{L \cap M}(\alpha \otimes \beta) = \SR^{GL_a(q)}_{T_1 \times GL_{a-1}(q)}(\al) \otimes \beta.$$
It is well known (see e.g. \cite[p. 70]{L}) that the Harish-Chandra induction and restriction respect the Lusztig series,
which in our case is labeled by the semisimple element in the dual group $G^* \cong G$ that has each 
$s_i \in \FF_q^\times$ as eigenvalue with multiplicity $k_i$. Hence we can write
\begin{equation}\label{gl4}
  \SR^{GL_a(q)}_{T_1 \times GL_{a-1}(q)}(\al) = \sum^r_{j=1}S(s_1,(1)) \otimes S(s_1,\mu_j)
\end{equation}  
for some $r \geq 1$ and some partitions $\mu_j \vdash (a-1)$. It then follows by Lemma \ref{hc}(ii) and the 
transitivity of the Harish-Chandra induction that
$$R^L_{L \cap M} ( \SR^M_{L \cap M}(\alpha \otimes \beta)) = 
    R^L_{T_1 \times GL_{a-1}(q) \times GL_b(q)}\left(\left(\sum^r_{j=1}S(s_1,(1)) \otimes S(s_1,\mu_j)\right) \otimes \beta\right)$$
$$= \sum^r_{j=1}S(s_1,(1)) \otimes R^{GL_{n-1}(q)}_{GL_{a-1}(q) \times GL_{b}(q)}(S(s_1,\mu_j) \otimes \beta)= 
    \sum^r_{j=1}S(s_1,(1)) \otimes \gam_j$$
where $\gam_j :=  S(s_1,\mu_j) \circ S(s_2,\lam_2) \circ \ldots \circ S(s_m,\lam_m) \in \Irr(GL_{n-1}(q))$. 
Certainly, the 
irreducible constituent $S(s_1,(1)) \otimes \gam_j$ can be of odd degree only when $\deg(S(s_1,\mu_j))$ and 
$|GL_{n-1}(q):(GL_{a-1}(q) \times GL_b(q))|$ are both odd. The former implies, by applying (a) to \eqref{gl4} that  
$\mu_j = \lam^*_1$. The latter implies by \cite[Lemma 4.4(i)]{NT1} that ${n-1 \choose a-1}$ is odd.   

We have shown that the first summand in \eqref{gl3} contains at most one irreducible constituent 
of odd degree, namely
$$S(s_1,(1)) \otimes (S(s_1,\lam^*_1) \circ S(s_2,\lam_2) \circ \ldots \circ S(s_m,\lam_m)),$$ 
which can occur with multiplicity $\leq 1$ in the summand only when ${n-1 \choose a-1}$ is odd.

\smallskip
(d) Arguing as in (c), we can show that each irreducible constituent in the second summand of \eqref{gl3} is of form 
$$S(s'_1,(1)) \otimes (S(s'_2,\nu_1) \circ S(s'_3,\nu_2) \circ \ldots \circ S(s'_m,\nu_{m-1}) \circ S(s_1,\lam_1)),$$
where $s'_1 \in \{s'_2, \ldots ,s'_m\} = \{s_2, \ldots ,s_m\}$ and $\nu_1, \ldots ,\nu_{m-1}$ are some partitions with the lengths totalling to
$b-1$. By \cite[Lemma 4.4(i)]{NT1}, such an irreducible constituent can have odd degree only when ${n-1 \choose b-1}={n-1 \choose a}$ is odd.

Given the condition \eqref{ab}, Corollary \ref{binom3} implies that ${n-1 \choose a}$ is even. Thus the second summand 
in \eqref{gl3} contains no odd-degree irreducible constituent. Since $\chi(1)$ is odd, we are done by the result of (c).   
\end{proof}

\begin{proof}[Completion of the proof of Theorem \ref{main-gl}]
In view of Lemma \ref{order}, Proposition \ref{restr}, and Lemma \ref{trivial}, it remains to prove that 
$|\Irr_{2'}(G)| = |\Irr_{2'}(P)|$ when $n = 2k+1$. We can choose a Sylow $2$-subgroup $S = S_1 \times S_2 < L$ of $G$, where 
$S_1 = \OO_2(GL_1(q))$, and $S_2 \in \Syl_2(GL_{2k}(q))$ has the form $S_2 = A \wr B$, where $A \in \Syl_2(GL_2(q))$ and 
$B \in \Syl_2(\SSS_k)$. Then the $S$-module $V$ decomposes as $V_0 \oplus V'$, where $V_0 = \langle e_1 \rangle_{\FF_q}$
and all irreducible constituents of $V' = \langle e_2, \ldots ,e_n \rangle_{\FF_q}$ are of even dimension. It follows that 
$\NB_G(S)$ is contained in $\operatorname{Stab}_G(V_0) \cap \operatorname{Stab}_G(V') = L$ and so $\NB_G(S) = \NB_L(S)$. By the main result of \cite{O1}, 
\begin{equation}\label{gl5}
  |\Irr_{2'}(G)| = |\Irr_{2'}(\NB_G(S))| = |\Irr_{2'}(\NB_L(S))| = |\Irr_{2'}(L)|.
\end{equation}  
As mentioned in part (a) of the proof of Proposition \ref{restr}, $|\Irr_{2'}(L)| = |\Irr_{2'}(P)|$, and so we are done.
\end{proof}

\begin{proof}[Proof of Theorem B]
Let $G := GL_n(q)$, and consider normal subgroups $S := SL_n(q)$ and $H := \ZB(G)S$ of $G$.

\smallskip
(i) Consider any $\theta \in \Irr_{2'}(S)$. Since $H = \ZB(G) * S$ is a central product, $\theta$ extends to $H$ which has odd index 
$\gcd(n,q-1)$ in $G$. Hence $\theta$ lies under some $\chi \in \Irr_{2'}(G)$. Now we can find a label \eqref{gl1} for $\chi$ that 
satisfies \ref{main-gl}(i). Applying \cite[Theorem 1.1]{KT1} to $\chi$, we then see that $\chi_S$ is irreducible, i.e. $\chi_S = \theta$. 
We have shown that every $\theta \in \Irr_{2'}(S)$ extends to $G$, when
\begin{equation}\label{sl1}
  |\Irr_{2'}(S)| = |\Irr_{2'}(G)|/(q-1)
\end{equation}
as $G/S \cong C_{q-1}$.  

\smallskip
(ii) We may assume that $Q = P \cap S$, where $P$ is the maximal parabolic subgroup of $G$ considered in Theorem \ref{main-gl}.
Then $Q = UL_1$ with $L_1:= L \cap S \cong GL_{n-1}(q)$. Note that the normal subgroup
$$K := \operatorname{Stab}_S(e_1) \cap \operatorname{Stab}_S(\langle e_2, \ldots ,e_n\rangle_{\FF_q}) \cong SL_{n-1}(q)$$ 
of $L_1$ acts transitively on $\Irr(U) \smallsetminus \{1_U\}$ since $n \geq 3$. It follows that every irreducible character of
$Q$ that is nontrivial on $U$ has even degree.  In particular,
\begin{equation}\label{sl2}
  |\Irr_{2'}(Q)| = |\Irr_{2'}(L_1)|.
\end{equation}

\smallskip
(iii) Now we keep the notation of (i) and 
consider any odd-degree irreducible constituent $\zeta$ of $\theta_Q$ (which exists since $\theta$ has odd degree). 
We have shown that $\zeta$ is trivial on $U$ and so is a constituent of $\SR^G_L(\chi)$. We can be view $\zeta$ as an $L_1$-character. 
Any irreducible character of $\ZB(G)L_1 = \ZB(G) * L_1$ lying above $\zeta$ then also has degree equal to $\zeta(1)$. 
Next, $\ZB(G)L_1$ has odd index $\gcd(n,q-1)$ in $L$. Thus any irreducible character of $L$ lying above $\zeta$ must have odd degree. 
Applying Theorem \ref{main-gl}, we now see that $\zeta$ is a constituent of $(\chi^*)_{L_1}$, and 
$$(\chi^*)_{\tilde K} = S(s_1,\lam^*_1) \circ S(s_2,\lam_2) \circ \ldots \circ S(s_m,\lam_m)$$
for 
$$\tilde K :=  \operatorname{Stab}_G(e_1) \cap \operatorname{Stab}_G(\langle e_2, \ldots ,e_n\rangle_{\FF_q}) \cong GL_{n-1}(q).$$ 
Since $\chi^*(1)$ is odd,  
by \cite[Lemma 4.4(i)]{NT1} and Corollary \ref{binom2} we have that $[k_1-1]_2$, $[k_2]_2$, $\ldots$, $[k_m]_2$ are pairwise different;
in particular, $k_1-1, k_2, \ldots ,k_m$ are pairwise different. It then follows by \cite[Theorem 1.1]{KT1} that $(\chi^*)_K$ is irreducible. 
Hence, $(\chi^*)_{L_1}$ is irreducible and $\zeta = (\chi^*)_{L_1}$.

\smallskip
(iv) To complete the proof of Theorem B, by Lemma \ref{trivial} it remains to show that $|\Irr_{2'}(S)| = |\Irr_{2'}(Q)|$. 
Since 
$$|\Irr_{2'}(L)| = |\Irr_{2'}(GL_1(q) \times GL_{n-1}(q))| = (q-1)|\Irr_{2'}(GL_{n-1}(q))| = (q-1)|\Irr_{2'}(L_1)|,$$
we are done by combining \eqref{gl5}, \eqref{sl1}, and \eqref{sl2}.
\end{proof}

\section{Finite groups with self-normalizing Sylow subgroups}

In this section we prove Theorem C. In fact, in this case the key hypothesis
is the self-normalizing Sylow and not the nature of the prime $p$.
The next result was proved in \cite{N1} for $P=H$ (with a more complicated proof).

\begin{thm}\label{solvable}
Suppose that $G$ is a $p$-solvable group, $P \in \syl pG$,
and assume that $P=\norm GP$. Let $P \leq H \le G$.
If $\chi \in \irr G$ has $p'$-degree, then the restriction
of $\chi$ to $H$ contains a unique $p'$-degree irreducible constituent $\psi \in \irr H$.
This establishes a natural bijection between the $p'$-degree irreducible
characters of $G$ and $H$.
\end{thm}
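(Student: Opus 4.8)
The plan is to prove Theorem~\ref{solvable} by induction on $|G|$, using the standard machinery of character triples and Clifford theory for $p$-solvable groups, exploiting crucially that $P=\norm GP$ is inherited by suitable quotients and subgroups. First I would reduce to the case where $H<G$ is proper (if $H=G$ there is nothing to prove) and pick a minimal normal subgroup $N$ of $G$. Since $G$ is $p$-solvable, $N$ is either a $p$-group or a $p'$-group, and these two cases are handled differently. In both cases the key point is that $\norm{G/N}{PN/N}=PN/N$: indeed, if $gN$ normalizes $PN/N$ then $PN$ is normalized modulo $N$, and a Frattini-type argument together with $\norm GP=P$ forces $g\in PN$. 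Thus the hypothesis ``$P$ self-normalizing'' passes to $G/N$, so the inductive hypothesis applies to $G/N$ and its subgroup $HN/N$ (which contains $PN/N$).

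When $N$ is a $p$-group, $N\le P\le H$, so characters of $G$ and $H$ with $N$ in their kernel correspond via the inductive hypothesis for $G/N\ge H/N$. For a general $\chi\in\irr G$ of $p'$-degree, one does \emph{not} in general have $N\subseteq\ker\chi$; here I would use the fact that, since $N$ is a normal $p$-subgroup, $\chi$ lies over a $P$-invariant (indeed $G$-invariant constituents can be arranged after passing to stabilizers) character $\theta\in\irr N$, and one reduces via the Clifford correspondence to $\irr{G_\theta\mid\theta}$. Using a character triple isomorphism $(G_\theta,N,\theta)\to(G^\star,N^\star,\theta^\star)$ with $N^\star$ central of order coprime to... — actually the cleaner route is: since $N$ is a $p$-group and $\chi(1)$ is $p'$, the unique (up to conjugacy) constituent $\theta$ of $\chi_N$ is linear-ish only after reduction, so I would instead invoke the standard fact (cf. Isaacs, \emph{Character Theory}, or the $\pi$-special character theory) that in a $p$-solvable group every $p'$-degree character is determined by its behaviour on $p$-complements, and handle the $N$ a $p$-group case by passing to $G/N$ directly after noting $O_{p'}(G)\ne 1$ or $O_p(G)\ne 1$.

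When $N$ is a $p'$-group: let $\theta\in\irr N$ lie under $\chi$, let $G_\theta$ be its stabilizer and $H_\theta=H\cap G_\theta$. By the Clifford correspondence, $\Irr_{p'}(G\mid\theta)\leftrightarrow\Irr_{p'}(G_\theta\mid\theta)$ and similarly for $H$; and $\chi_H$ decomposes compatibly with these correspondences since $H\supseteq P$ and restriction commutes with induction along the relevant double cosets (the Mackey formula, with the double cosets controlled by a Frattini argument: $G_\theta$ contains a conjugate of $P$, and $H_\theta$ contains a Sylow $p$-subgroup of $G_\theta$ with $\norm{G_\theta}{\cdot}=\cdot$). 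If $G_\theta<G$ we finish by induction applied to $G_\theta\ge H_\theta$. If $\theta$ is $G$-invariant, use the character triple $(G,N,\theta)$: replace it by an isomorphic triple $(\Gamma,Z,\lambda)$ with $Z$ central and cyclic, apply the inductive hypothesis (or the base case, once $|\Gamma|<|G|$, which holds unless $N\le\zent G$ and $\theta$ linear — a case one handles by going to $G/\ker\theta$ or directly), and pull back. The uniqueness of the $p'$-constituent $\psi$ and the bijectivity both transfer through character-triple isomorphisms because these preserve character degrees modulo $p$ and the subgroup $H$ corresponds to a subgroup containing the image of $P$.

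The main obstacle I anticipate is the ``anchor'' case where $G$ has no proper normal subgroup on which to induct in the naive way, i.e.\ the case $N$ a $p'$-group with $\theta$ $G$-invariant and $N$ not central, forcing genuine use of character triples and verification that $H$ (which need only satisfy $P\le H\le G$, \emph{not} any normality) corresponds correctly under the triple isomorphism. One must check that a subgroup $H$ with $P\le H$ and $N\trianglelefteq G$ gives $HN/N\cdot$(stuff) behaving well; the subtlety is that $H$ is arbitrary, so $HN$ need not equal $H$ and $H\cap N$ need not be all of $N$, and the Mackey/double-coset bookkeeping for $\chi_H$ must be shown to involve only the trivial double coset contributing an odd ($p'$)-degree piece — exactly as in the parabolic computation of Proposition~\ref{restr}, but now abstractly, via the equality $\norm GP=P$ which kills the extra double cosets. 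Making this double-coset vanishing rigorous in the arbitrary-$H$ setting is the crux; everything else is the standard $p$-solvable Clifford-theoretic induction.
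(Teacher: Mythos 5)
There is a genuine gap: what you have written is a plan with its two decisive steps left unresolved, and in both places you miss the specific (and much simpler) way the hypothesis $P=\norm GP$ enters. For a normal $p'$-subgroup $N$, the paper's argument is that the $P$-invariant irreducible constituent $\theta$ of $\chi_N$ (which exists since $p\nmid\chi(1)$) must be the trivial character: indeed $\cent NP\le N\cap\norm GP=N\cap P=1$, so by the Glauberman correspondence the only $P$-invariant character of $N$ is $1_N$, whence $N\le\ker{\chi}$ and one passes to $G/N$ outright. Your proposed route through $G_\theta$, the Mackey formula, and character-triple isomorphisms for $G$-invariant $\theta$ is not needed, and more importantly you yourself flag that "making this double-coset vanishing rigorous in the arbitrary-$H$ setting is the crux" without supplying the argument — so the $p'$-case is not actually proved in your write-up.

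For a normal $p$-subgroup $N$, your argument visibly trails off ("actually the cleaner route is... handle the case by passing to $G/N$ directly"), and passing to $G/N$ directly fails precisely because $N\not\le\ker{\chi}$ in general. The missing idea is a uniqueness statement: $\chi_P$ contains a linear constituent $\lambda$, and $\nu:=\lambda_N\in\irr N$ is the \emph{unique} $P$-invariant irreducible constituent of $\chi_N$, because any two such are $\norm GP$-conjugate and $\norm GP=P$ fixes both. This is what kills the unwanted part of $\chi_H$: writing $\chi_H=(\psi_{T\cap H})^H+\Delta$ with $T=G_\nu$ and $\psi$ the Clifford correspondent, any $p'$-degree constituent of $\Delta$ would produce a second $P$-invariant constituent of $\chi_N$, a contradiction. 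One then finishes by induction in $T$ (noting $T$ again has a self-normalizing Sylow $p$-subgroup and $T\cap H$ sits between $P$ and $T$) when $T<G$, and by extending $\nu$ to $G$ and applying Gallagher when $T=G$. Your proposal never isolates this uniqueness of $\nu$, which is the actual content of the theorem in the $p$-group case; without it the claim that only "the trivial double coset" contributes a $p'$-degree piece remains an assertion.
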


\begin{proof} We proceed by induction on $|G|$. Let $\chi \in \irr G$ be
of $p'$-degree.
Let $N$ be a normal subgroup of $p'$-order.
Since $\chi(1)$ is not divisible by $p$, using the Clifford correspondence (Theorem (6.11) of \cite{Is}),
let $\theta \in \irr N$ be $P$-invariant under $\chi$. Then $\cent NP=1$ by hypothesis,
and $\theta=1_N$, by the Glauberman correspondence
(Theorem (13.1) of \cite{Is}). Therefore,
$N\leq \ker\chi$.  Now consider $\bar\chi \in \irr{G/N}$ the corresponding
character in the factor group.
By induction $\bar\chi_{NH}$ contains a unique $p'$-irreducible constituent
$\psi$. Now, all the constituents of this character restrict irreducibly to $H$,
so we are done in this case. So we may assume that
$N=1$. Let now $N$ be a normal $p$-subgroup of $G$.
Then $\chi_P$ contains a linear constituent $\lambda$ and
$\lambda_N=\nu \in \irr N$. Let $P \leq T$ be the stabilizer
of $\nu$ in $G$, and let $\psi \in \irr T$ be the Clifford correspondent of $\chi$
over $\nu$. It is well-known by Clifford theory (see \cite[Lemma 2.5]{S}) that
$$\chi_H=(\psi_{T\cap H})^H + \Delta \, $$
where no irreducible constituent of $\Delta$ lies over $\nu$.
We claim that no irreducible constituent of $\Delta$
has $p'$-degree. If $\xi$ is an irreducible constituent
of $\Delta$ of $p'$-degree, then $\xi_N$ has some linear
$P$-invariant constituent $\epsilon$. But then $\chi_N$ has
$P$-invariant irreducible constituents $\nu$ and $\epsilon$, 
and by a standard argument they are $\norm GP$-conjugate.
However, $\norm GP=P$, and since both are $P$-invariant, they
are equal. This contradicts the nature of $\Delta$.
Suppose now that $T<G$.  Notice that $T$ has a self-normalizing
Sylow $p$-subgroup. By induction,
$$\psi_{T\cap H}=\mu  + \rho\,,$$
where $\mu$ has $p'$-degree and every irreducible
constituent of $\rho$ has degree divisible by $p$.
All these constituents lie over $\nu$, so all of them induce
irreducibly to $H$ by the Clifford correspondence.
We conclude that
$$\chi_H=(\psi_{T\cap H})^H + \Delta=\mu^H + \rho^H + \Delta \, ,$$
and the theorem is again proved in this case.

Hence the last case is that $T=G$. We claim that $\nu$ extends to
$G$. This is because $\nu$ extends to $P$,  by coprimeness it
extends to every   $Q/N\in \syl q{G/N}$ if $q \ne p$ (by Corollary (6.27) of \cite{Is}),
and therefore it extends to $G$ by Theorem (6.26)
of \cite{Is}. Let $\lambda\in \irr G$
be a linear extension of $\nu$.  By Gallagher's Corollary (6.17) of \cite{Is},
we can write $\chi=\beta\lambda$, where $\beta \in \irr{G/N}$,
has $p'$-degree.  By induction,
$$\beta_H=\tau + \Xi\,,$$ 
where every irreducible constituent of $\Xi$ has degree divisible by $p$.
Now
$$\chi_H=\lambda_H \beta_H =\lambda_H\tau + \lambda_H\Xi \, ,$$
and this proves half of the theorem. The bijectivity of the map is proved
similarly.
\end{proof}

The fact that $\norm GP=P$ is key. For instance,
take the product of an extraspecial group $F = 3^{1+2}_+$ of 
exponent $3$ acted on by $C_4$ in such a way that $C_4$ acts trivially
on the center of $F$. The normalizer of a Sylow $2$-subgroup
in $G$ is $N=C_4 \times C_3$, is even maximal.
Every restriction of the irreducible characters of degree 3 has
three irreducible linear constituents.

\begin{proof}[Proof of Theorem C]
In view of Theorem \ref{solvable}, we may assume $p > 2$. 
Decompose $\chi_H =\sum^t_{i=1}\rho_i + \sum^s_{j=1}\psi_j$,
where $t \geq 1$, $s \geq 0$, $\rho_i,\psi_j \in \Irr(H)$, $p \nmid \rho_i(1)$, $p |\psi_j(1)$.
Now each $\rho_i|_P$ contains a linear character, and by \cite[Corollary B]{NTV},
$\chi_P = \lambda + \delta$ contains a unique linear character
$\lambda$ (so that either $\delta = 0$ or all irreducible constituents of $\delta$ have degree 
divisible by $p$). Hence $t = 1$, and
we have a well-defined map
$$*: \chi \mapsto \chi^* := \rho_1$$ 
from $X := \Irr_{p'}(G)$ to $Y := \Irr_{p'}(H)$.

Now, for any $\rho \in Y$, $\rho^G$ contains a $p'$-degree $\chi$ of $G$ and
then $\chi_H$ contains $\rho$, so $\chi^* = \rho$. Thus $*$ is onto. By \cite[Corollary B]{NTV}, 
$|X| = |P/P'| = |Y|$. So $*$ is a bijection.
\end{proof}

\section{Canonical character correspondences in symmetric groups}\label{SD}
Note that the natural character correspondences described in \cite{APS} (see Theorem \ref{sn}) and 
in Theorems A and B are given by restriction. In the case of $G = \SSS_n$, one may ask whether 
the restriction to a Young subgroup $H$ of odd index in $G$ would also give such a correspondence
between the odd-degree characters of $G$ and those of $H$. Unfortunately, the answer is no: 
if $G=\SSS_7$ and $H=\SSS_5 \times \SSS_2$, then the restriction to $H$ of any irreducible character
of degree $35$ of $G$ have three odd-degree irreducible constituents.

\smallskip
When $G = \SSS_n$ and $P \in \Syl_2(G)$, a canonical bijection between $\Irr_{2'}(G)$ and $\Irr_{2'}(P)$
was defined in \cite{G} (although given by restriction to $P$ only when $n$ is a $2$-power). 
We will construct a new such canonical bijection, where, in addition, the correspondent $\chi^\sharp \in \Irr_{2'}(P)$ of 
$\chi \in \Irr_{2'}(G)$ is an irreducible constituent of $\chi_P$ (although not necessarily occurring with multiplicity one).
Let $\HC(n)$ denote the set of $n$ hook partitions $(n-i,1^i)$, $0 \leq i < n$, of $n$.  
If $\lam \vdash n$, then $Y(\lam)$ is the Young diagram of $\lam$. Furthermore,
we will say that $\lam$ is an {\it odd} partition of $n$ (and write $\lam \vdash_\orm n$) exactly 
when $\chi^\lam \in \Irr(\SSS_n)$ has odd degree.

\begin{lem}\label{hook}
Let $n \in \Z_{\geq 1}$ and let $\gam$ be a partition of $n$ with a hook $H$ of length $m$ in $Y(\gam)$. Let $\beta \in \HC(m)$ be the 
hook partition corresponding to $H$, and let $\al \vdash (n-m)$ be such that $Y(\al)$ is obtained from $Y(\gam)$ by removing the 
rim $m$-hook $R$ of $Y(\gam)$ corresponding to $H$. Then $(\chi^\gam)_{\SSS_{n-m} \times \SSS_m}$ contains $\chi^\al \otimes \chi^\beta$ 
as an irreducible constituent with multiplicity one.
\end{lem}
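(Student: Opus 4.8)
The plan is to reduce the statement to the classical Littlewood--Richardson rule together with the combinatorics of rim hooks. First I would invoke the branching-type formula for the restriction $(\chi^\gam)_{\SSS_{n-m}\times\SSS_m}$, which by Frobenius reciprocity and the Littlewood--Richardson rule says that $\chi^\al\otimes\chi^\beta$ occurs in $(\chi^\gam)_{\SSS_{n-m}\times\SSS_m}$ with multiplicity equal to the Littlewood--Richardson coefficient $c^{\gam}_{\al,\beta}$, i.e. the number of LR skew tableaux of shape $Y(\gam)/Y(\al)$ and content $\beta$. Thus the lemma amounts to showing that, when $\beta\in\HC(m)$ is the hook $(m-j,1^j)$ and $Y(\gam)/Y(\al)$ is precisely the rim $m$-hook $R$ of $Y(\gam)$ corresponding to the hook $H$ (so in particular this skew shape has no two boxes in the same column beyond the ``turn'' and no two in the same row beyond it — it is a connected ribbon), one has $c^{\gam}_{\al,\beta}=1$ if and only if $j$ is the ``leg length'' of the rim hook $R$ (equivalently of the hook $H$), and that this is the value of $j$ determined by $\beta$.

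The key steps, in order, are: (1) recall that removing the rim $m$-hook $R$ of $Y(\gam)$ corresponding to a hook $H$ of arm length $a$ and leg length $\ell$ (so $m=a+\ell+1$) produces a connected skew diagram $Y(\gam)/Y(\al)=R$ which is a ribbon with exactly $\ell+1$ rows and $a+1$ columns; (2) observe that a ribbon of row-lengths $r_1,\dots,r_k$ (read from top) admits a semistandard filling of content $\mu$ that is an LR tableau precisely under a rigid constraint — since consecutive rows of a ribbon overlap in exactly one column, the reverse-reading-word lattice condition together with semistandardness forces, for a ribbon, a \emph{unique} admissible content-shape pairing for each fixed content, and in particular forces the content to be the hook partition $(a+1,1^{\ell})$ read appropriately; (3) conclude that for $\beta=(m-j,1^j)$ the coefficient $c^{\gam}_{\al,\beta}$ is $1$ when $j=\ell$ (the leg length of $H$) and $0$ otherwise, and that $\beta\in\HC(m)$ being \emph{the} hook corresponding to $H$ means precisely $j=\ell$; hence the multiplicity is exactly one. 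Alternatively, one can bypass LR coefficients entirely and argue via the Murnaghan--Nakayama rule applied on the $\SSS_m$-factor: pairing $(\chi^\gam)_{\SSS_{n-m}\times\SSS_m}$ against $\chi^\al\otimes\chi^\beta$ and using that $\chi^\beta$ for a hook $\beta=(m-j,1^j)$ has a clean Murnaghan--Nakayama expansion, one recovers the same count of rim-hook removals.

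The main obstacle I anticipate is step (2): making precise and correct the claim that on a \emph{ribbon} skew shape the Littlewood--Richardson / lattice-word condition pins down both the content and the filling uniquely, and matching the resulting hook content $(a+1,1^{\ell})$ to the partition $\beta$ that the statement says ``corresponds to $H$''. This is where one must be careful about conventions: whether ``the hook partition corresponding to $H$'' records arm length as the first part and leg length as the number of further rows (so $\beta=(a+1,1^{\ell})$), and whether the rim hook $R$ is read so that its ribbon has $\ell+1$ rows — these conventions must be aligned for the ``multiplicity one'' (rather than ``multiplicity zero'') conclusion. Once the ribbon-filling rigidity is stated cleanly, the rest is a short verification; everything else (existence of the hook, the shape of $R$, connectedness of the ribbon) is standard Young-diagram combinatorics that I would cite rather than reprove.
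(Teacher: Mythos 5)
Your proposal is correct and follows essentially the same route as the paper: both reduce the multiplicity to a Littlewood--Richardson count over the ribbon $Y(\gam)/Y(\al)$ and use the rigidity of semistandard lattice fillings with hook content $\beta=(a+1,1^{\ell})$ (the $a+1$ ones are forced to the tops of the $a+1$ columns, and the remaining $\ell$ distinct entries then occupy the remaining $\ell$ rows, one each, in a unique way). One caution: your step (2), asserting that a ribbon admits at most one LR filling for \emph{every} fixed content, is stronger than needed and false for general ribbons (ribbon Schur functions need not be multiplicity-free); the lemma only requires, and your argument really only establishes, uniqueness for the hook content matched to the arm and leg lengths of $H$.
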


\begin{proof}
We apply the Littlewood-Richardson rule as given in \cite[Corollary 2.8.14]{JK}. Write $\beta = (k,1^{m-k})$ for some $1 \leq k \leq m$ and 
consider the symbols $a_{ij}$ where the $(i,j)$-node belongs to $Y(\beta)$:
$$a_{11}, a_{12}, \ldots ,a_{1k},a_{21}, a_{31}, \ldots, a_{m-k+1,1}.$$
We need to count the number of ways of adding symbols $a_{ij}$ to $Y(\al)$ to get 
$Y(\gam)$, that is, to fill up the rim $m$-hook $R$ of $Y(\gam)$ with these symbols $a_{ij}$ in such a way that all conditions (i), (ii), (iii)
of \cite[Corollary 2.8.14]{JK} are fulfilled. First we have to put the $k$ symbols $a_{11}, a_{12}, \ldots ,a_{1k}$ in $k$ different columns of
$R$ consecutively starting from the rightmost to get a proper diagram. Since $R$ has exactly $k$ columns, we have to put these symbols at the top
of these $k$ columns, and there is exactly one way to do it. Then we need to put the $m-k$ symbols $a_{21}, a_{31}, \ldots,a_{m-k+1,1}$
in the $m-k$ remaining rows of $R$ consecutively starting from the highest row. Since there remain only $m-k$ rows, each with one node, 
there is again exactly one way to do it. 
\end{proof}

We refer to \cite[\S2.7]{JK} for the notion of the $m$-core of any partition $\lam \vdash n$ and any $m \in \Z_{\geq 1}$.

\begin{lem}\label{core}
Let $m \in \Z_{\geq 2}$, $n \in \Z$ be such that $m \leq n \leq 2m-1$, and let 
$\al = (a_1,a_2, \ldots ,a_r) \vdash (n-m)$ with $a_1 \geq a_2 \geq \ldots \geq a_r > 0$.

\begin{enumerate}[\rm(i)]
\item For any $\beta = (k,1^{m-k}) \in \HC(m)$, there is exactly one $\gam \vdash n$ such that 
$\gam$ has a hook $H$ of length $m$ that corresponds to $\beta$, and, furthermore, $Y(\al)$ is obtained
from $Y(\gam)$ by removing the rim $m$-hook corresponding to $H$.

\item Suppose in addition that $m$ is a $2$-power and that $\al \vdash_\orm (n-m)$. Then, for each
$\beta = (k,1^{m-k}) \in \HC(m)$, there is a unique $\lam \vdash_\orm n$ such that $\gam$ has hook
$H$ of length $m$ that corresponds to $\beta$, and, furthermore, $\al$ is the $m$-core of $\gam$.   
\end{enumerate}
\end{lem}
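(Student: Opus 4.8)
The plan is to work entirely on the level of Young diagrams and beta-numbers (first-column hook lengths), since the conditions ``$\gam$ has a hook of length $m$ corresponding to $\beta$'' and ``$Y(\al)$ is obtained from $Y(\gam)$ by removing the corresponding rim $m$-hook'' translate cleanly into the arithmetic of beta-sets. First I would set up the abacus/beta-number description: fix a beta-set $B_\al$ for $\al$ with $r+1$ beads (or enough beads), so that adding a rim $m$-hook to $Y(\al)$ corresponds to choosing a bead at position $j \in B_\al$ with $j+m \notin B_\al$ and replacing it by $j+m$; the leg length of the resulting rim hook (equivalently, the value $k$ where the hook partition is $(k,1^{m-k})$) is determined by the number of beads strictly between $j$ and $j+m$. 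So proving (i) amounts to showing: for each $\ell \in \{0,1,\dots,m-1\}$ (the leg length, $\ell = m-k$) there is exactly one admissible pair $(j, j+m)$ with exactly $\ell$ beads strictly between them. The constraint $m \le n \le 2m-1$, i.e.\ $1 \le n-m \le m-1$, forces $|\al| = n-m < m$, which means the total number of ``occupied gaps'' is small enough that the positions are rigidly determined — this is where the hypothesis on $n$ is essential, and I expect this counting to be the main technical content of part (i).

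For part (i), the cleanest route is probably a direct bijective/counting argument on the abacus with $m$ runners (the ``$m$-runner abacus''): since $n - m < m$, the diagram $Y(\al)$ fits inside the ``staircase'' region small enough that on the $m$-runner abacus the beads are distributed so that adding the $m$-hook of leg length $\ell$ is possible in exactly one way. Concretely I would argue: the rim $m$-hooks removable from a would-be $\gam$ with $|\gam| = n \le 2m-1$ all start within the first $m$ diagonals, and the map sending such a $\gam$ to the pair (its $m$-core, leg length of the removed hook) is injective on the fiber over $\al$; surjectivity onto all leg lengths $0,\dots,m-1$ follows because removing $m$ from each of the $m$ possible bead-positions-mod-$m$ compatible with $B_\al$ gives $m$ distinct outcomes, and the range of possible leg lengths is exactly $\{0,\dots,m-1\}$. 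Thus $\gam \leftrightarrow \beta$ is a bijection, proving (i).

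For part (ii), the added hypotheses are that $m = 2^a$ is a $2$-power and $\al \vdash_\orm (n-m)$, and one wants the unique $\gam$ produced in (i) (now with the $m$-core condition in place of the rim-hook-removal condition — but note $m \le n \le 2m-1$ means removing any rim $m$-hook from $\gam$ lands in an $m$-core already, so the two conditions coincide here) to additionally be an \emph{odd} partition of $n$. The key input is the James/Macdonald-type fact — available via Lemma \ref{binom1} and the hook-length formula — that for $m$ a $2$-power, $\chi^\gam$ has odd degree iff its $m$-core $\al$ has odd degree \emph{and} the $m$-quotient of $\gam$ consists of $2$-power-sized pieces arranged compatibly; since $|\gam| - |\al| = m$, the $m$-quotient of $\gam$ has total size $1$, i.e.\ it is a single box on one runner, which automatically has odd degree. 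So $\gam \vdash_\orm n$ iff $\al \vdash_\orm(n-m)$, which holds by hypothesis — giving existence. Uniqueness is immediate from the uniqueness already proved in (i). The main obstacle I anticipate is bookkeeping the beta-number arithmetic in (i) carefully enough to nail down \emph{uniqueness} (not just existence) of $\gam$ for each leg length; the $2$-power/odd-degree part of (ii) is then a short consequence of the standard Nakayama/hook-length machinery (Lemma \ref{binom1}) applied to the size-$1$ quotient.
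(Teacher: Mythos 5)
Your overall framework (beta-sets and the $m$-runner abacus) is viable and genuinely different from the paper's argument, which works directly with the outer rim of $Y(\alpha)$ and a case analysis on whether the added rim hook starts in the first row or the first column. But as written, part (i) has a real gap at exactly the point you yourself flag as ``the main technical content'': you never prove that the $m$ admissible bead moves have pairwise distinct leg lengths. What you do establish (or could easily establish) is that there are exactly $m$ addable rim $m$-hooks --- one per runner, since $|\alpha|=n-m<m$ forces $\alpha$ to be its own $m$-core and the $m$-quotient of any candidate $\gamma$ to be a single box --- and that each leg length lies in $\{0,\dots,m-1\}$. From ``$m$ outcomes, $m$ possible values'' you cannot conclude bijectivity without injectivity of the leg-length map, and that injectivity \emph{is} the uniqueness assertion of part (i); your sentence ``the map \dots\ is injective on the fiber over $\alpha$'' assumes the conclusion. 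The hypothesis $n\le 2m-1$ is genuinely needed here: for $\alpha=(1,1)$, $m=2$, $n=4$, two of the three addable dominoes have leg length $1$. So some honest bookkeeping with the positions of the $m-1$ gaps below the largest bead of $B_\alpha$ --- or the paper's direct analysis of how the rim hook meets the outer rim of $Y(\alpha)$ --- cannot be avoided.

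Part (ii) is fine modulo (i) and takes a legitimately different route from the paper. You correctly note that since $n-m<m$ the ``rim-hook removal'' and ``$m$-core'' conditions coincide, and you deduce existence from the claim that a weight-one $m$-quotient forces $\gamma\vdash_\orm n$ whenever $\alpha\vdash_\orm (n-m)$ and $m$ is a $2$-power. That claim is true, but it rests on Macdonald's $2$-core-tower criterion for odd-degree characters, which is more than Lemma \ref{binom1} plus the hook length formula as you cite them. The paper instead invokes [APS, Lemma 1] (exactly $m$ odd partitions of $n$ have $m$-core $\alpha$, each with a unique $m$-hook) and then uses part (i) to see that their associated hook partitions exhaust $\HC(m)$. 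Either route works once part (i) is actually proved; yours would need the Macdonald criterion stated and referenced properly.
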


\begin{proof}
(i) We determine in how many ways $Y(\al)$ can be obtained from $Y(\gam)$ by removing a rim $m$-hook $R$ that corresponds to $H$,
a hook of length $m$ with associated partition $\beta \in \HC(m)$; in particular, $R$ spans $m-k+1$ rows and $k$ columns.  
Since $n \leq 2m-1$, $H$ must be the hook corresponding to a node that belongs either to the first row or the first column of $Y(\gam)$. Thus 
$R$ must touch either the first row or the first column of $Y(\al)$. Consider the outer rim $S$ of $Y(\al)$, which spans 
$r+1$ rows and $a_1+1$ columns and so has length $L:= a_1+r+1$. Note that
$$a_1+r \leq a_1 + a_2 + \ldots +a_r + 1 = n-m+1 \leq m.$$

Suppose first that $m-k+1 \leq r$. Then $R$ spans $k \geq m-r+1 \geq a_1+1$ columns.  
If $R$ does not touch the first row of $Y(\al)$, then it must touch the first column of $Y(\al)$ and 
spans at most $a_1$ columns, a contradiction. Thus $R$ must begin in the first row of $Y(\al)$. In this case, since 
$k \leq r$, $R$ consists precisely of the $N_1$ nodes that belong to the first $m-k+1$ rows of $S$, together with $m-N_1$ more nodes
in the first row, to the right of the top node of $S$. (Note that, since the $(r+1)$th row of $S$ has at least two nodes, we have
$N_1 \leq L-2 \leq m-1$.) Thus $\gam$ exists and is unique in this case.

Next suppose that $m-k+1 \geq r+1$. If $R$ does not touch the first column of $Y(\al)$, then $R$ spans at most $r$ rows, a contradiction.
Hence $R$ must start from the first column of $Y(\al)$. If in addition $k \leq a_1$, then 
$R$ consists precisely of the $N_2$ nodes that belong to the first $k$ columns of $S$, together with $m-N_2$ more nodes
in the first column, below the last row of $S$. (Note that, since the $(a_1+1)$th row of $S$ has at least two nodes, we have
$N_2 \leq L-2 \leq m-1$.) Thus $\gam$ exists and is unique in this case. Finally, if $k \geq a_1+1$, then 
$R$ consists of $S$ together with $k-(a_1+1)$ more nodes in the first row (to the right of the top node of $S$), and 
$(m-k+1)-(r+1)$ additional nodes in the first column (below the last row of $S$). Again, $\gam$ exists and is unique in this case.

\smallskip
(ii) By \cite[Lemma 1]{APS} (see also \cite[\S6]{O2}),
there are exactly $m$ odd partitions $\lam_i \vdash_\orm n$, $1 \leq i \leq m$, such that the $m$-core of $\lam_i$ 
is $\alpha$, and, furthermore, each $\lam_i$ has a unique hook $H_i$ of length $m$. As $n \leq 2m-1$, $Y(\al)$ is obtained from 
$Y(\lam_i)$ by removing the rim $m$-hook corresponding to $H_i$. Each $H_i$ corresponds to exactly one of $m$ 
hook partitions $\beta_i \in \HC(m)$, and, as we showed in (i), $\lam_i$ is uniquely determined by $\beta_i$. It follows that 
$\beta_1, \beta_2, \ldots ,\beta_m$ are pairwise distinct and so are precisely the $m$ hook partitions of $m$.
\end{proof}

\begin{thm}\label{sym}
Let $n \in \Z_{\geq 1}$ and let $n = 2^{n_1} + 2^{n_2} + \ldots + 2^{n_r}$ be the $2$-adic decomposition of $n$, where 
$n_1 > n_2 > \ldots > n_r \geq 0$. Let $G = \SSS_n$ and $P \in \Syl_2(G)$. 
\begin{enumerate}[\rm (i)]
\item There are canonical bijections 
$$\Irr_{2'}(G) \stackrel{\al}{\longrightarrow} \Theta(n) \stackrel{\beta}{\longleftarrow} \Irr_{2'}(P)$$
  with $\Theta(n) := \HC(2^{n_1}) \times \HC(2^{n_2}) \times \ldots \times \HC(2^{n_r})$.
\item If $\al(\chi) = (\mu_1,\mu_2, \ldots ,\mu_r)$ for $\chi \in \Irr_{2'}(G)$, then the restriction of $\chi$ to 
  the Young subgroup $\SSS_{2^{n_1}} \times \SSS_{2^{n_2}} \times \ldots \times \SSS_{2^{n_r}}$ contains 
  $\chi^{\mu_1} \otimes \chi^{\mu_2} \otimes \ldots \otimes \chi^{\mu_r}$ as an irreducible constituent.  
\item The map $\chi \mapsto \chi^{\sharp} := \beta^{-1}(\alpha(\chi))$ is a canonical bijection between $\Irr_{2'}(G)$ and $\Irr_{2'}(P)$.
  Furthermore, $\chi^\sharp$ is an irreducible constituent of $\chi_P$.
\end{enumerate}
\end{thm}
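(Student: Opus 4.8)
The plan is to build the bijections $\alpha$ and $\beta$ separately, driven by the $2$-adic decomposition $n = 2^{n_1}+\dots+2^{n_r}$, and then to verify the restriction statement (ii), from which (iii) follows immediately since $\chi^\sharp := \beta^{-1}(\alpha(\chi))$ is then forced to be an irreducible constituent of $\chi_P$.

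\textbf{Construction of $\alpha$.} I would proceed by induction on $r$, the number of parts in the $2$-adic decomposition. The key combinatorial input is Lemma~\ref{core}(ii): setting $m = 2^{n_r}$ (the smallest $2$-power appearing) and writing $n' = n - 2^{n_r}$, every odd partition $\lambda \vdash_\orm n$ has a unique hook $H$ of length $m$, its $m$-core $\alpha$ is an odd partition of $n'$ (with $n' \le 2m-1$ failing in general, so one first reduces modulo the largest $2$-power — more precisely one should peel off $2^{n_1}$ first, or argue via the abacus as in \cite{APS,O2}), and the pair $(\alpha, \beta)$ with $\beta \in \HC(m)$ the hook partition of $H$ determines $\lambda$ uniquely. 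This gives a bijection $\Irr_{2'}(\SSS_n) \to \Irr_{2'}(\SSS_{n'}) \times \HC(2^{n_r})$; iterating peels off each $\HC(2^{n_i})$ in turn and yields $\alpha : \Irr_{2'}(G) \to \Theta(n)$. One must check the peeling operations at different scales $2^{n_i}$ commute appropriately so that $\alpha$ is well-defined independent of order; the cleanest way is to use the abacus/$2$-quotient description of odd partitions (a $2$-power-worth of beads at each level), where the decomposition into $\HC(2^{n_i})$'s is manifestly canonical.

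\textbf{Construction of $\beta$.} Since $P \in \Syl_2(\SSS_n)$ decomposes as a direct product $P \cong P_1 \times P_2 \times \dots \times P_r$ with $P_i \in \Syl_2(\SSS_{2^{n_i}})$ (corresponding to the block decomposition of $\{1,\dots,n\}$), we have $\Irr_{2'}(P) = \prod_i \Irr_{2'}(P_i)$, and it suffices to produce canonical bijections $\Irr_{2'}(P_i) \to \HC(2^{n_i})$. For a Sylow $2$-subgroup of $\SSS_{2^k}$, $|\Irr_{2'}(\SSS_{2^k})| = 2^k = |\HC(2^k)|$ (there are exactly $2^k$ hook partitions of $2^k$, all odd), and the linear characters of $P_i$ — which are exactly the irreducible characters of $2'$-degree, namely of degree $1$ — are indexed canonically: $P_i$ is an iterated wreath product $C_2 \wr C_2 \wr \dots \wr C_2$, its abelianization is elementary abelian of rank $n_i$, and $\Irr(P_i/P_i') \cong (\Z/2)^{n_i}$ can be matched with $\HC(2^{n_i})$ via the binary-digit labelling of hook partitions $(2^{n_i}-j, 1^j)$ by $j \in \{0,1,\dots,2^{n_i}-1\}$. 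I would make this last identification canonical by matching it with the restriction behaviour, i.e. defining $\beta$ so that (ii) holds.

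\textbf{Proof of (ii) and conclusion.} This is the main obstacle. One shows by induction on $r$ that if $\alpha(\chi) = (\mu_1,\dots,\mu_r)$ then $\chi_{\SSS_{2^{n_1}} \times \dots \times \SSS_{2^{n_r}}}$ contains $\chi^{\mu_1} \otimes \dots \otimes \chi^{\mu_r}$. The base case $r=1$ is the statement that $\mu_1 = \lambda$ when $n = 2^{n_1}$, which is trivial. For the inductive step, use the branching/Littlewood--Richardson result Lemma~\ref{hook}: writing $\lambda \vdash_\orm n$ with smallest-scale hook of length $m = 2^{n_r}$, core $\alpha' \vdash_\orm (n-m)$ and hook partition $\mu_r \in \HC(m)$, Lemma~\ref{hook} gives that $(\chi^\lambda)_{\SSS_{n-m} \times \SSS_m}$ contains $\chi^{\alpha'} \otimes \chi^{\mu_r}$ with multiplicity one; by induction $\chi^{\alpha'}$ restricted to the smaller Young subgroup contains $\chi^{\mu_1} \otimes \dots \otimes \chi^{\mu_{r-1}}$, and transitivity of restriction finishes it. (One has to be slightly careful that the hook $H$ used here really is the one corresponding to peeling the $2^{n_r}$-block of $P$; this is where the compatibility of the abacus decomposition with the direct-product decomposition of $P$ is used.) Finally, since $\chi^\sharp = \beta^{-1}(\alpha(\chi))$ is by construction the linear character of $P = P_1 \times \dots \times P_r$ whose $i$-th component is $\beta^{-1}(\mu_i) \in \Irr_{2'}(P_i)$, and since $\beta$ was defined precisely so that $\beta^{-1}(\mu_i)$ is a constituent of $(\chi^{\mu_i})_{P_i}$, we conclude from (ii) and transitivity of restriction that $\chi^\sharp$ is a constituent of $\chi_P$. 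Both $\alpha$ and $\beta$ being bijections, so is $\chi \mapsto \chi^\sharp$, and canonicity (commutation with Galois and with $\Aut(\SSS_n)$, which is inner for $n \ne 6$ and handled directly for $n = 6$) follows from the canonicity of each piece of the construction.
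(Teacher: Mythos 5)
Your overall strategy is the one the paper follows: define $\al$ by successively stripping $2$-power hooks from an odd partition (via \cite[Lemma 1]{APS} and Lemma~\ref{core}), define $\beta$ through the direct-product decomposition $P=P_1\times\cdots\times P_r$ and the hook-partition labelling of the linear characters of each $P_i$, and get (ii) from Lemma~\ref{hook}. However, as written your inductive step for (ii) contains a genuine error: you peel off the \emph{smallest} $2$-power first, asserting that $\lam\vdash_\orm n$ has a (unique) hook of length $m=2^{n_r}$ whose removal leaves its $m$-core $\al'\vdash_\orm(n-m)$. This is false in general: for $n=5=2^2+2^0$ the odd partition $(3,2)$ has two hooks of length $2^{n_r}=1$, and its $1$-core is the empty partition, not a partition of $4$. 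You yourself identified the correct order when constructing $\al$ (peel off $2^{n_1}$ first, where \cite[Lemma 1]{APS} guarantees a \emph{unique} $2^{n_1}$-hook and, since $n<2^{n_1+1}$, the $2^{n_1}$-core has size exactly $n-2^{n_1}$); the induction for (ii) must be run in that same largest-first order. Then Lemma~\ref{hook} gives $\chi^{\mu_1}\otimes\chi^{\pi_2}$ as a constituent of the restriction to $\SSS_{2^{n_1}}\times\SSS_{n-2^{n_1}}$, and induction on $\pi_2$ plus transitivity of restriction finishes (ii) with no need for the abacus commutativity argument you invoke. This is exactly how the paper proceeds.

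One further point to tighten: for $\beta$ you only match cardinalities ($2^{n_i}$ linear characters of $P_i$ versus $2^{n_i}$ hooks of $2^{n_i}$) and say you would ``make the identification canonical by matching it with the restriction behaviour.'' That requires the nontrivial input, which the paper cites from \cite{G} (Lemma 3.1 and Theorem 3.2 there), that for each $\nu\in\HC(2^{n_i})$ the restriction $(\chi^{\nu})_{P_i}$ has a \emph{unique} linear constituent and that distinct hooks yield distinct linear characters; without this, $\beta$ is not well defined and the final claim that $\chi^\sharp$ is a constituent of $\chi_P$ does not follow. With that citation supplied and the order of peeling corrected, your argument coincides with the paper's proof.
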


\begin{proof}
We can take $P = P_1 \times P_2 \times \ldots \times P_r$ with $P_i \in \Syl_2(\SSS_{2^{n_i}})$. Then any $\lam \in \Irr_{2'}(P_i)$ is
of the form $\lam = \lam_1 \otimes \lam_2 \otimes \ldots \otimes \lam_r$ with $\lam_i \in \Irr_{2'}(P_i)$. By Lemma 3.1 and Theorem 3.2 of 
\cite{G}, there is a unique $\nu_i \in \HC(2^{n_i})$ such that $\lam_i$ is the unique odd-degree irreducible constituent of 
the restriction of $\chi^{\nu_i} \in \Irr_{2'}(\SSS_{2^{n_i}})$ to $P_i$. Now we can define 
$$\beta(\lam) := (\nu_1, \nu_2, \ldots ,\nu_r).$$ 

Next, consider any $\chi = \chi^\pi \in \Irr_{2'}(\SSS_n)$, so that $\pi \vdash_\orm n$. By \cite[Lemma 1]{APS}, $\pi$ contains a unique
$2^{n_1}$-hook $\mu_1 \in \HC(2^{n_1})$ and $\pi_2 \vdash_\orm (n-2^{n_1})$, where $\pi_2$ is its $2^{n_1}$-core of 
$\pi$. Conversely, any such pair $(\mu_1,\pi_2)$ can be obtained in this way from a unique $\pi \vdash_\orm n$ by Lemma \ref{core}.
As $Y(\pi_2)$ is obtained from $Y(\pi)$ by removing the rim $2^{n_1}$-hook corresponding to $\mu_1$,
by Lemma \ref{hook} we have that the restriction of $\chi^\pi$ to $\SSS_{2^{n_1}} \times \SSS_{n-2^{n_1}}$ contains $\chi^{\mu_1} \otimes \chi^{\pi_2}$ as 
an irreducible constituent.
Applying this process to $\pi_2$ we then get a pair $(\mu_2,\pi_3)$ with $\mu_2 \in \HC(2^{n_2})$ and 
$\pi_3 \vdash_\orm (n-2^{n_1}-2^{n_2})$. Repeating this process successively, we get $\mu_i \in \HC(2^{n_i})$ for all $i$, and can then 
define 
$$\alpha(\chi) := (\mu_1, \mu_2,\ldots,\mu_r).$$ 
One easily checks that both $\alpha$ and $\beta$ are bijections, and that (ii) and (iii) are fulfilled.
\end{proof}

Note that irreducible characters of $\SSS_n$ are rational, and the same is true for odd-degree irreducible characters of $P \in \Syl_2(\SSS_n)$,
see \cite[Lemma 3.3]{NT2}.

\begin{cor}\label{young}
Let $G = \SSS_n$ and let $Y = \SSS_{k_1} \times \SSS_{k_2} \times \ldots \times \SSS_{k_m}$ be a Young subgroup of 
odd index in $G$. Then there is a canonical bijection $\chi \mapsto \chi^\star$ between $\Irr_{2'}(G)$ and $\Irr_{2'}(Y)$.
\end{cor}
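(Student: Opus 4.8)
The plan is to deduce this from the canonical parametrisation $\Irr_{2'}(\SSS_k) \xrightarrow{\alpha} \Theta(k)$ of Theorem~\ref{sym}, which turns the Young subgroup $Y$ into a product of such parametrisations. First I would extract the combinatorics of the hypothesis: since $|G:Y| = n!/\prod_{i=1}^{m} k_i!$ is odd, a repeated application of Lemma~\ref{binom1} (the same bookkeeping as in Corollary~\ref{binom2}) shows that if $n = 2^{n_1} + \dots + 2^{n_r}$ with $n_1 > \dots > n_r \ge 0$ is the $2$-adic decomposition of $n$, then there is a partition $\{1,\dots,r\} = I_1 \sqcup \dots \sqcup I_m$ with $k_i = \sum_{l \in I_i} 2^{n_l}$ for each $i$; in particular the $k_i$ are pairwise distinct. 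Consequently
$$\Theta(n) = \prod_{l=1}^{r} \HC(2^{n_l}) = \prod_{i=1}^{m} \Bigl( \prod_{l \in I_i} \HC(2^{n_l}) \Bigr) = \prod_{i=1}^{m} \Theta(k_i).$$

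Next I would assemble the bijection. On the one hand, Theorem~\ref{sym}(i) applied to $\SSS_n$ gives a canonical bijection $\alpha_G \colon \Irr_{2'}(G) \to \Theta(n)$. On the other hand, $\Irr_{2'}(Y) = \prod_{i=1}^{m} \Irr_{2'}(\SSS_{k_i})$ by the character theory of direct products, and applying Theorem~\ref{sym}(i) to each factor yields a canonical bijection $\prod_i \alpha_{\SSS_{k_i}} \colon \Irr_{2'}(Y) \to \prod_{i=1}^{m}\Theta(k_i) = \Theta(n)$. I then define $\chi^\star$ to be the unique element of $\Irr_{2'}(Y)$ with $\bigl(\prod_i \alpha_{\SSS_{k_i}}\bigr)(\chi^\star) = \alpha_G(\chi)$; the map $\chi \mapsto \chi^\star$ is a bijection since it is a composite of bijections, and it is canonical in the sense of Section~\ref{SD} because each $\alpha$ is.

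Finally I would verify the two features expected of a canonical correspondence. Galois equivariance is automatic: all irreducible characters of $\SSS_n$ are rational, and so are all odd-degree irreducible characters of $P \in \Syl_2(\SSS_n)$ by \cite[Lemma 3.3]{NT2}, so the absolute Galois group over $\Q$ acts trivially on both $\Irr_{2'}(G)$ and $\Irr_{2'}(Y)$. Compatibility with the automorphisms of $G$ stabilising $Y$ is also immediate, because $\NB_G(Y) = Y$: the $k_i$ being pairwise distinct, no two direct factors of $Y$ can be swapped, and for $n = 6$ the outer automorphism does not preserve any proper odd-index Young subgroup; hence every such automorphism is inner by an element of $Y$ and fixes all the characters in sight. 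I do not expect a genuine obstacle here: the argument is purely an assembly of Theorem~\ref{sym}, the only point requiring attention being the iterated use of Lemma~\ref{binom1}. One should, however, \emph{not} try to realise $\chi^\star$ as a constituent of $\chi_Y$: as the case $G = \SSS_7$, $Y = \SSS_5 \times \SSS_2$ already shows, restriction to a Young subgroup can produce several odd-degree constituents, so the bijection must be defined through the $\Theta(n)$-labels rather than by restriction.
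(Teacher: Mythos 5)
Your proposal is correct and takes essentially the same route as the paper: the paper defines $\chi^\star$ by passing through $\Irr_{2'}(P)$ for $P = P_1 \times \cdots \times P_m \in \Syl_2(G)$ via the map $\sharp = \beta^{-1}\circ\alpha$ of Theorem~\ref{sym} and its inverses on each factor $\SSS_{k_i}$, whereas you pass through $\Theta(n) = \prod_i \Theta(k_i)$ directly via $\alpha$; since $\beta$ identifies $\Irr_{2'}(P)$ with $\Theta(n)$ compatibly with the product decomposition, the two constructions give the identical bijection. Your explicit appeal to Lemma~\ref{binom1} to see that each $k_i$ is a sum of distinct $2$-powers $2^{n_l}$ partitioning the $2$-adic decomposition of $n$ is exactly the bookkeeping the paper carries out (in the proof of Theorem~E) and is the right justification for $\Theta(n) = \prod_i \Theta(k_i)$.
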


\begin{proof}
For each $i$, choose $P_i \in \Syl_2(\SSS_{k_i})$. Then $P := P_1 \times P_2 \times \ldots \times P_m \in \Syl_2(G)$.  
Consider any $\chi \in \Irr_{2'}(G)$ and let $\lam := \chi^\sharp \in \Irr_{2'}(P)$ as in Theorem \ref{sym}.
Next, write $\lam = \lam_1 \otimes \lam_2 \otimes \ldots \otimes \lam_m$ with $\lam_i \in \Irr_{2'}(P_i)$.  For each $i$ there is 
a unique $\psi_i \in \Irr_{2'}(\SSS_{k_i})$ such that 
$(\psi_i)^\sharp = \lam_i$ by Theorem \ref{sym}. Now we can define 
$$\chi^\star = \psi := \psi_1 \otimes \psi_2 \otimes \ldots \otimes \psi_m \in \Irr_{2'}(Y).$$
It is easy to check that the map $\chi \mapsto \chi^\star$ is a bijection between $\Irr_{2'}(G)$ and $\Irr_{2'}(Y)$.
\end{proof}

We are now ready to prove Theorem D.

\begin{proof}[Proof of Theorem D]
(i) Let $M$ be a maximal subgroup of $G = \SSS_n$ of odd index. If $1 \leq n \leq 4$, then $M \in \Syl_2(G)$ and so
we can set $\chi^\star = \chi^\sharp$ using Theorem \ref{sym}.  Thus we may assume that $n \geq 5$. Maximal subgroups of odd
index of $G$ are determined by the main result of \cite{LS} and independently by \cite[Theorem C]{K}, and they are either
maximal Young subgroups or stabilizers $\SSS_k \wr \SSS_t$ of set  partitions of $\{1,2, \ldots ,n=kt\}$ into $t$ subsets of size $k$.
In the former case we can apply Corollary \ref{young} (with $m = 2$). 

So we will assume that $M = \SSS_k \wr \SSS_t$, and write
$M = N \rtimes \SSS_t$ where $N := \SSS_k \times \SSS_k \times \ldots \times \SSS_k \cong (\SSS_m)^t$
and $\SSS_t$ naturally permutes the $t$ direct factors of $N$.  

\smallskip
(ii) Consider any $\varphi \in \Irr_{2'}(M)$. 
Since $\varphi(1)$ is odd, the length of the $\SSS_t$-orbit of any irreducible constituent of $\varphi_N$ is odd.
Hence, by Corollary \ref{binom2} and using the action of $\SSS_t$ on $N$, we can find an irreducible constituent
$$\psi = \underbrace{\psi_1 \otimes \ldots \otimes \psi_1}_{t_1} \otimes  \underbrace{\psi_2 \otimes \ldots \otimes \psi_2}_{t_2}
              \otimes \ldots \otimes  \underbrace{\psi_m \otimes \ldots \otimes \psi_m}_{t_m}$$
of $\varphi_N$, where $m \geq 1$, $1\leq [t_1]_2 < [t_2]_2 < \ldots < [t_m]_2$,
$\sum^m_{i=1}t_i = t$, and $\psi_1, \ldots ,\psi_m \in \Irr_{2'}(\SSS_k)$ are pairwise distinct.
Note that each $\SSS_t$-orbit of odd length on $\Irr_{2'}(N)$ contains 
a unique representative $\psi$ satisfying these conditions. 
Then $Y:= \operatorname{Stab}_{\SSS_t}(\psi) = \SSS_{t_1} \times \SSS_{t_2} \times \ldots \times \SSS_{t_m}$ is a Young subgroup 
of odd index in $\SSS_t$, and the inertia group $J$ of $\psi$ in $M$ is precisely $N \rtimes Y$. Note that $\psi$ has a canonical extension
$\tilde\psi$ to $J$: if $\psi_i$ is afforded by a $\CC \SSS_k$-module $V_i$, then we can let $Y$ act on
$$V := \underbrace{V_1 \otimes \ldots \otimes V_1}_{t_1} \otimes  \underbrace{V_2 \otimes \ldots \otimes V_2}_{t_2}
              \otimes \ldots \otimes  \underbrace{V_m \otimes \ldots \otimes V_m}_{t_m}$$             
via naturally permuting the tensor factors, and then take $\tilde\psi$ to be the $J$-character afforded by $V$. By the Clifford correspondence
and Gallagher's Corollary  \cite[(6.17)]{Is} (or directly by \cite[Theorem 4.4.3]{JK}), we have 
$$\varphi = (\tilde\psi\al)^M,$$
where $\al \in \Irr_{2'}(Y)$. Thus each $\varphi \in \Irr_{2'}(M)$ defines a unique pair $(\psi,\al)$ in a canonical way. 

\smallskip
(iii) Fix $R \in \Syl_2(\SSS_k)$. 
Then we can find 
$Q \in \Syl_2(\SSS_t)$ such that $P = R \wr Q = R^t \rtimes Q$ is a Sylow $2$-subgroup of $M$. 

Consider any $\lam \in \Irr_{2'}(P)$. Then, as in (ii), after a suitable $\SSS_t$-conjugation, we can write 
$$\mu := \lam_{R^t} = \underbrace{\mu_1 \otimes \ldots \otimes \mu_1}_{s_1} \otimes  \underbrace{\mu_2 \otimes \ldots \otimes \mu_2}_{s_2}
              \otimes \ldots \otimes  \underbrace{\mu_{m'} \otimes \ldots \otimes \mu_{m'}}_{s_{m'}},$$
where $m' \geq 1$, $1\leq [s_1]_2 < [s_2]_2 < \ldots < [s_{m'}]_2$,
$\sum^{m'}_{i=1}s_i = t$, and $\mu_1, \ldots ,\mu_{m'} \in \Irr_{2'}(R)$ are pairwise distinct.
Note that each $\SSS_t$-orbit of odd length on $\Irr_{2'}(R^t)$ contains 
a unique representative $\mu$ satisfying these conditions. 
Then $Y':= \operatorname{Stab}_{\SSS_t}(\mu) = \SSS_{s_1} \times \SSS_{s_2} \times \ldots \times \SSS_{s_{m'}}$ is a Young subgroup of 
$\SSS_t$. Since $\mu$ is $Q$-invariant, $Y'$ has odd index in $\SSS_t$, and the inertia group $J'$ of $\mu$ in 
$R^t \rtimes \SSS_t$ is precisely $R^t \rtimes Y'$. We can replace $Q$ by $Q' := Q_1 \times Q_2 \times \ldots \times Q_{m'}$
where $Q_i \in \Syl_2(\SSS_{s_i})$. As in (ii), $\mu$ has a canonical extension $\tilde\mu$ to $J'$. We now have that
$$\lam = (\tilde\mu)_P\cdot\beta,$$
where $\beta \in \Irr_{2'}(Q')$. Thus each $\lam \in \Irr_{2'}(P)$ defines a unique pair $(\mu,\beta)$ in a canonical way. 

\smallskip
(iv) Now we can define $\chi^\star$ for each $\chi \in \Irr_{2'}(G)$ as follows. First, let $\lam := \chi^\sharp \in \Irr_{2'}(P)$ as in 
Theorem \ref{sym}. Then we can apply the analysis in (iii) to $\lam$ to get the canonical pair $(\mu,\beta)$. We will now rename $m'$ by $m$,
$s_i$ by $t_i$, and $Y'$ by $Y$. For each $i$, there is a unique $\psi_i \in \Irr_{2'}(\SSS_k)$ such that $\psi_i^\sharp = \mu_i$. Next,
$$\beta = \beta_1 \otimes \beta_2 \otimes \ldots \otimes \beta_m,$$
where $\beta_i \in \Irr_{2'}(Q_i)$. Again, for each $i$ there is a unique $\al_i \in \Irr_{2'}(\SSS_{t_i})$ such that 
$\al_i^\sharp = \beta_i$, and then we take 
$$\al := \al_1 \otimes \al_2 \otimes \ldots \otimes \al_m \in \Irr_{2'}(Y).$$
Now we define $\chi^\star := \varphi$, where $\varphi$ corresponds to $(\psi,\al)$ as in (ii). It is straightforward to check that 
this map is a bijection between $\Irr_{2'}(G)$ and $\Irr_{2'}(M)$.
\end{proof}
 
\section{Canonical character correspondences in finite general linear and unitary groups}\label{SE} 
For $\kappa = \pm$, we let $GL^\kappa_n(q)$ denote $GL_n(q)$ when $\kappa = +$ and $GU_n(q)$ when $\kappa = -$.
Let $C_\kappa$ denote the unique subgroup of order $q-\kappa 1$ of $\FF_{q^2}^\times$, and let 
$\tCk$ denote the character group of $C_\kappa$ (under the pointwise product). We will fix a generator $\varep$ of $C_\kappa$,  
a primitive $(q-\kappa 1)$th root of unity $\tv \in \CC$. Then, for $s=\varep^j \in C_\kappa  \leq \FF_{q^2}^\times$,  let $\hat{s}$ denote the character that sends 
$\varep$ to $\tv^j$. We can then identify 
\begin{equation}\label{dual}
  \Irr(\OB_{2}(C_\kappa)) \times \Irr(\OB_{2'}(C_\kappa)) \longleftrightarrow  \tCk \longleftrightarrow C_\kappa,
\end{equation}  
where the latter is given by $\hat{s} \mapsto s$. Let $\Gamma$ denote the Galois group $\Gal(\QQ(\tv)/\QQ)$, and let $\Gamma$ act on 
$C_\kappa$ via $\sigma(\varep) = \varep^i$ whenever $\sigma(\tv) = \tv^i$. We will fix a suitable basis of the natural module $V = \FF_q^n$,
resp. $\FF_{q^2}^n$, for $G$, and use it to define the Frobenius automorphism $F_p$ sending any matrix $X := (a_{ij})$ to $(a_{ij}^p)$ in that basis
(where $p$ is the unique prime divisor of $q$) 
and, additionally, the inverse-transpose automorphism $\tau: X \mapsto \tw tX^{-1}$ when $\kappa = +$. Let
\begin{equation}\label{outer}
  D:= \langle \tau,F_p \rangle,
\end{equation}   
and let $D$ act on $C_\kappa$ and correspondingly on $\tCk$ via
$$F_p(s) = s^p,~F_p(\hat{s}) = \hat{s}^p,~\tau(s) = s^{-1},~\tau(\hat{s}) = \bar{\hat{s}} = \hat{s}^{-1}.$$
We also let $D$ act trivially on $\HC(n)$,  the set of hook partitions of $n$.  

For $\lam \vdash n$, let $\varphi^\lam$ denote the unipotent character
of $GL^\kappa_n(q)$ labeled by $\lam$. By \cite[Proposition 13.30]{DM}, $S(s,\lam) = \varphi^\lam(\hat{s} \circ {\mathrm {det}})$ if $\kappa = +$.
If $\kappa = -$, we also define 
$$S(s,\lam) := \varphi^\lam(\hat{s} \circ {\mathrm {det}}).$$ 
 
\begin{lem}\label{power}
Let $n = 2^m$ for some $m \in \Z_{\geq 0}$, $q$ be an odd prime power, $\kappa = \pm$, $G = GL^\kappa_n(q)$, and 
$P \in \Syl_2(G)$. 
\begin{enumerate}[\rm(i)]
\item The field of values of any character in $\Irr_{2'}(\NB_G(P))$ is contained in $\QQ(\tv)$.
\item There is a canonical bijection between $\Irr_{2'}(\NB_G(P))$ and 
$\tCk \times \HC(n)$ that commutes with the action of $\Gamma$. Furthermore, we can choose $P$ to be $D$-invariant and the 
canonical bijection to be $D$-equivariant. 
\item $\Irr_{2'}(\NB_G(P))$ contains exactly $2^{m+1}$ real characters,  and all of 
them are also rational.
\end{enumerate}
\end{lem}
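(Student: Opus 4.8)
The plan is to reduce everything to a concrete description of $\NB_G(P)$ when $G = GL^\kappa_{2^m}(q)$ and $P\in\Syl_2(G)$, and then read off (i)--(iii). First I would recall the structure of a Sylow $2$-subgroup of $GL^\kappa_{2^m}(q)$: starting from $G_0 := GL^\kappa_1(q) = C_\kappa$, one has $A\in\Syl_2(GL^\kappa_2(q))$ a (generalized) quaternion or dihedral-type $2$-group, and iterating, a Sylow $2$-subgroup of $GL^\kappa_{2^m}(q)$ is obtained as an iterated wreath product $A \wr C_2 \wr \cdots \wr C_2$ ($m-1$ copies of $C_2$), sitting inside the stabilizer of a direct-sum decomposition of $V$ into $2^m$ lines (when $\kappa=+$, anisotropic lines/nondegenerate hyperbolic pairs organized appropriately when $\kappa=-$). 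The key point, which I would isolate as the main lemma, is that $\NB_G(P)$ is contained in the stabilizer of that decomposition into $2^m$ one-dimensional pieces, so that $\NB_G(P) = \NB_L(P)$ where $L \cong (C_\kappa)^{2^m}$ is the corresponding maximally split (for $\kappa=-$, appropriately twisted) torus-like subgroup normalized by a Sylow $2$-subgroup of $\SSS_{2^m}$; more precisely $\NB_G(P) \cong C_\kappa \wr W$ with $W\in\Syl_2(\SSS_{2^m})$ acting by permuting the $2^m$ factors (up to the small discrepancy caused by $A$ versus $C_\kappa$ on the bottom level, which one tracks carefully). This is the step I expect to be the main obstacle: pinning down $\NB_G(P)$ on the nose, including the unitary case and the precise action, requires either a careful induction on $m$ or a citation to the literature on Sylow normalizers in classical groups; I would do the induction, using that $\NB_{X\wr C_2}(P_X \wr C_2) = \NB_X(P_X)\wr C_2$ whenever $P_X$ is self-normalizing-mod-center in the relevant sense.

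Granting that description, part (ii) is essentially bookkeeping. The group $C_\kappa \wr W$ has its $2'$-degree irreducible characters parametrized, via Clifford theory over the abelian base group $B := (C_\kappa)^{2^m}$, by the $W$-orbits of characters $\hat{s}_1\otimes\cdots\otimes\hat{s}_{2^m}$ of $B$ whose orbit length is odd, together with a $2'$-degree character of the inertia factor — but the analysis in the proof of Theorem D (part (ii), with $t = 2^m$, $\SSS_k$ replaced by $C_\kappa$) shows an odd-length $W$-orbit forces all $2^m$ coordinates equal to a single $\hat s \in \tCk$, so the orbit is a single point, its stabilizer in $\SSS_{2^m}$ is all of $\SSS_{2^m}$, and the extension problem is governed by $\Irr_{2'}(W)$. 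Thus $\Irr_{2'}(\NB_G(P))$ is in canonical bijection with $\tCk \times \Irr_{2'}(W)$, and by Theorem \ref{sym}(i) (applied to $\SSS_{2^m}$, where $P = W$ and $\HC(2^m)=\Theta(2^m)$) we get $\Irr_{2'}(W)$ in canonical bijection with $\HC(n)$. Composing gives the desired canonical bijection $\Irr_{2'}(\NB_G(P)) \longleftrightarrow \tCk\times\HC(n)$. For $\Gamma$-equivariance: $\Gamma$ acts on the $\tCk$-coordinate through its action on $C_\kappa$ as defined before \eqref{dual}, and acts trivially on $\Irr_{2'}(W)$ because (as noted after Theorem \ref{sym}, citing \cite[Lemma 3.3]{NT2}) those characters are rational; since the bijection was built from Clifford theory with a canonically chosen extension of the linear character $\hat s\otimes\cdots\otimes\hat s$ (Gallagher), it intertwines the two $\Gamma$-actions. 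For $D$-equivariance one chooses $P$ to be $D$-invariant (possible: the decomposition of $V$ and the bottom-level $A$ can be taken $F_p$- and $\tau$-stable) and observes $D$ acts on $\tCk$ as specified in \eqref{outer} and trivially on $\HC(n)$, matching the action on $\Irr_{2'}(\NB_G(P))$ since $D$ normalizes $B$ coordinatewise and fixes $W$.

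Part (i) is then immediate from (ii): every character in $\Irr_{2'}(\NB_G(P))$ has field of values contained in $\QQ(\tv)$ because its $\tCk$-component takes values in $\QQ(\tv) = \QQ(\text{$(q-\kappa1)$th roots of unity})$ and its $W$-component is rational. For part (iii), a character is real exactly when it is fixed by the complex-conjugation element of $\Gamma$, which acts on $\tCk\times\HC(n)$ by inverting the first coordinate and fixing the second; the fixed points are the pairs $(\hat s,\beta)$ with $\hat s = \hat s^{-1}$, i.e. $\hat s$ of order dividing $2$, of which there are exactly $2$ in $\tCk$ (namely the trivial character and the unique character of order $2$, using that $q-\kappa1$ is even), times the $|\HC(n)| = 2^m$ choices of $\beta$ — giving $2\cdot 2^m = 2^{m+1}$ real characters. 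Finally, each such real character is rational: its $W$-component is rational, and its $\tCk$-component $\hat s$ has order $\le 2$ hence takes values in $\{\pm 1\}\subseteq\QQ$, so the whole character (being built from these by Clifford induction with a canonical, hence Galois-fixed, extension) is $\Gamma$-fixed, i.e. rational. The only place needing care is confirming the canonical extension in the Clifford correspondence is genuinely $\Gamma$-fixed when the base character is; this follows because the extension is defined by letting $W$ permute tensor factors of a module affording $\hat s\otimes\cdots\otimes\hat s$, a construction manifestly commuting with any field automorphism fixing $\hat s$.
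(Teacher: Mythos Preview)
Your proposal has a genuine structural gap: the claimed description $\NB_G(P) \cong C_\kappa \wr W$ with $W \in \Syl_2(\SSS_{2^m})$ is false, and the ``small discrepancy'' you flag is not small. Two things go wrong. First, even when $q \equiv \kappa 1 \pmod 4$ so that a Sylow $2$-subgroup $P = \OB_2(C_\kappa) \wr W$ does stabilize a decomposition of $V$ into lines, one has $\NB_G(P) = \OB_{2'}(\ZB(G)) \times P$, of order $(q-\kappa 1)_{2'} \cdot (q-\kappa 1)_2^{2^m} \cdot |W|$ rather than $(q-\kappa 1)^{2^m} \cdot |W|$; these disagree whenever $m \geq 1$ and $(q-\kappa 1)_{2'} > 1$ (e.g.\ $G = GL_2(13)$). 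Second, and more seriously, when $q \equiv -\kappa 1 \pmod 4$ a Sylow $2$-subgroup of $GL^\kappa_2(q)$ is semidihedral and does \emph{not} stabilize any decomposition of the natural $2$-dimensional module into lines: for $G = GL_2(3)$ one has $|P| = 16$ while $C_+ \wr C_2 = C_2 \wr C_2$ has order $8$. So there is no torus $L \cong (C_\kappa)^{2^m}$ normalized by $P$, and your Clifford analysis over the abelian base $B = (C_\kappa)^{2^m}$ has no group to live in.

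The paper's proof sidesteps this by decomposing $V$ into $2^{m-1}$ copies of a \emph{two}-dimensional space $V_1$, writing $P = R^{2^{m-1}} \rtimes Q$ with $R \in \Syl_2(GL^\kappa_2(q))$ and $Q \in \Syl_2(\SSS_{2^{m-1}})$, and invoking the fact (from \cite{Ko}) that $\NB_G(P) = \OB_{2'}(\ZB(G)) \times P$. The base case $n=2$ is then done by an explicit case split on $q \pmod 4$: for $q \equiv \kappa 1 \pmod 4$ one works inside the normalizer of the split torus, while for $q \equiv -\kappa 1 \pmod 4$ one writes down explicit generators $x,z$ coming from the nonsplit torus and verifies $D$-invariance by hand. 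In both subcases each $\lam \in \Irr_{2'}(R)$ is parametrized by a pair $(\gamma,j)$ with $\gamma \in \Irr(\OB_2(C_\kappa))$ and $j \in \{0,1\}$. For general $m$, the odd-degree characters of $\NB_G(P)$ are then $\delta \otimes \theta$ with $\delta \in \Irr(\OB_{2'}(\ZB(G)))$ and $\theta \in \Irr_{2'}(P)$ determined by $(\lam,\nu)$ with $\nu \in \Irr_{2'}(Q)$; the bijection with $\tCk \times \HC(n)$ packages $(\gamma,\delta)$ into $\hat s \in \tCk$ via \eqref{dual} and encodes $(j,\nu)$ as a single hook of $2^m$ by the explicit rule $\xi = (2^m-2k-j,1^{2k+j})$ where $\nu$ corresponds to $(2^{m-1}-k,1^k) \in \HC(2^{m-1})$. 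Your outline for (i) and (iii) is fine once (ii) is established, but the $n=2$ analysis and the input $\NB_G(P) = Z_1 \times P$ are essential and cannot be absorbed into ``tracking carefully''.
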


\begin{proof}
It is well known that 
\begin{equation}\label{norm1}
  \NB_G(P) = ZP = Z_1 \times P,
\end{equation}
where $Z:=\ZB(G)$ and 
$Z_1 := \OB_{2'}(Z)$ (see eg. \cite[Theorem 1]{Ko} for $n > 2$). Note that $Z$ can be canonically 
identified with $C_\kappa$. The case $m=0$ is obvious, so we will assume $m \geq 1$.

\smallskip
(a) Consider the case $n = 2$. Suppose $q \equiv \kappa 1 (\mod 4)$. Then we can take a basis, which is orthonormal if $\kappa = -$,
of $V_1 = \FF_q^2$ or $\FF_{q^2}^2$, define $F_p$ and $\tau$ in this basis, and choose 
$P = \NB_G(T_2) = T_2 \rtimes \langle z \rangle$, where $T_2 = \OB_2(T)$,
$T := \{ \diag(x,y) \mid x,y \in C_\kappa\}$, and $|z| = 2$. It is easy to see that each $\lam \in \Irr_{2'}(P)$ is 
uniquely determined by $\gam \in \Irr(\OB_2(C_\kappa))$ and $j \in \{0,1\}$, where $\lam(g) = \gam(xy)$
for $g = \diag(x,y) \in T_2$ and $\lam(z) = (-1)^j$. Also, $P$ is $D$-invariant. 

Suppose $q \equiv -\kappa 1 (\mod 4)$. Then we can find $\beta \in \FF_{q^2}^\times$ of order $(q^2-1)_2$. First we consider the case 
$\kappa = +$. Then $(q^2-1)_2 = (p^2-1)_2$ and so $\beta \in \FF_{p^2} \smallsetminus \FF_q$, $\beta^{p+1} = -1$, and $\beta + \beta^p \in \FF_p$. Now we 
can identify $V_1 = \FF_q^2$ with $\FF_{q^2}$ and take $x$ to be the multiplication by $\beta$, $z$ to be the 
Frobenius map $v \mapsto v^q$. We then choose $(1,\beta)$ to be a basis (over $\FF_q$) for $V$, and use this basis to define $F_p$ and 
$\tau$. This ensures that $F_p$ fixes both $x$ and $z$ (as they have matrices $\begin{pmatrix}0 & 1\\1 & \beta+\beta^p \end{pmatrix}$ and 
$\begin{pmatrix}1 & \beta+\beta^p \\0 & -1 \end{pmatrix}$ in the given basis), and furthermore $\tau(x) = x^{-1}$, $\tau(z) = x^{-2}z$. 
If $\kappa = -$, then we can choose  a Witt basis for $V_1 = \FF_{q^2}^2$, and take 
$$x =  \begin{pmatrix}\beta & 0 \\0 & \beta^{-q} \end{pmatrix},~ z = \begin{pmatrix}0 & 1 \\1 & 0 \end{pmatrix}$$
in this basis. Using the same basis to define $F_p$, we get $F_p(x) = x^p$, $F_p(z) = z$. In either case, $P$ is $D$-invariant, and 
$D$ acts trivially on $P/P' \cong C_2^2$, and again each 
$\lam \in \Irr_{2'}(P)$ is uniquely determined by $\gam \in \Irr(\OB_2(C_\kappa))$ and $j \in \{0,1\}$. 

Now any $\al \in \Irr_{2'}(\NB_G(P))$ is of the form $\al = \delta \otimes \lam$, where $\delta \in \Irr(Z_1)$ and 
$\lam \in \Irr_{2'}(P)$. Then we send $\al$ to $(\hat{s},(2-j,j))$, where $\hat{s} \in \tCk$ corresponds to $(\gam,\delta)$ via \eqref{dual}
and certainly $(2-j,j) \in \HC(2)$. One easily checks that (i) and (ii) hold in this case. 
Furthemore, $\al = \bar\al$ exactly when $\delta=1_{Z_1}$ and $\gam=\bar\gam$, which then 
also imply that $\al$ is rational. It also follows that there are exactly $4$ of such $\al$'s.

\smallskip
(b) In the general case, we can fix a direct sum decomposition 
$$V = \underbrace{V_1 \oplus V_1 \oplus \ldots \oplus V_1}_{2^{m-1}},$$
where $V_1$ is considered with the basis chosen in (a), and fix a basis of $V$ compatible with that basis of $V_1$ and 
this decomposition. We then use this basis to define $F_p$, $\tau$, and 
choose $P = P_1 \rtimes Q$, where 
$P_1 = R \times R \times \ldots \rtimes R \cong R^{2^{m-1}}$, $R \in \Syl_2(GL^\kappa_2(q))$, and $Q \in \Syl_2(\Sigma)$ with
$\Sigma \cong \SSS_{2^{m-1}}$ naturally permuting the $2^{m-1}$ direct summands in $V$ and, correspondingly,
the $2^{m-1}$ direct factors in $P_1$. Note that $Q$ also permutes these
direct factors transitively; furthermore, $D$ acts trivially on $Q$ and stabilizes both $P_1$ and $P$. 
It follows that any $\theta \in \Irr_{2'}(P)$ is uniquely determined by $\lam \in \Irr_{2'}(R)$ and 
$\nu \in \Irr_{2'}(Q)$, where $\theta_{P_1} = \lam \otimes \lam \otimes \ldots \otimes \lam$ and $\theta_Q = \nu$.  
As in (i), $\lam$ corresponds to $(\gam,j)$ with $\gam \in \Irr(\OB_2(C_\kappa))$ and $j \in \{0,1\}$. Each 
$\al \in \Irr_{2'}(\NB_G(P))$ is of the form $\al = \delta \otimes \theta$, where $\delta \in \Irr(Z_1)$ and 
$\theta \in \Irr_{2'}(P)$. By Lemma 3.1 and Theorem 3.2 of \cite{G}, there is a unique hook partition 
$\pi = (2^{m-1}-k,1^k) \vdash 2^{m-1}$ such that $\nu$ is the unique linear constituent of the restriction to $Q$ 
of $\chi^\pi \in \Irr(\SSS_{2^{m-1}})$; in particular, 
$\nu$ is rational. Hence  $\al = \bar\al$ exactly when $\delta = 1_{Z_1}$ and $\lam = \bar\lam$, which, as mentioned in (i), then imply
that $\al$ is rational, and there are exactly $2^{m+1}$ of such $\al$'s. Now we send $\al$ to 
$(\hat{s},\xi)$, where $\hat{s} \in \tCk$ corresponds to $(\gam,\delta)$ via \eqref{dual}, and $\xi \in \HC(n)$ is 
defined as follows 
$$\xi = \left\{ \begin{array}{ll}(2^m-2k,1^{2k}), & j=0\\ (2^m-2k-1,1^{2k+1}), & j = 1. \end{array} \right.$$
It is straightforward to see that the defined map is a bijection between $\Irr_{2'}(\NB_G(P))$ and 
$\tCk \times \HC(n)$, and that both (i) and (ii) hold.
\end{proof}
 
\begin{lem}\label{unitary}
Let $q$ be an odd prime power, $n \geq 2$, $G  = GU_n(q)$, and let  $\chi \in \Irr_{2'}(G)$. Then there is a
unique label of the form \eqref{gl1} for $\chi$, where for $1 \leq i \leq m$ we have that $s_i \in C_-$ are pairwise different, 
$S(s_i,\lam_i) \in \Irr_{2'}(GU_{k_i}(q))$, 
$\chi^{\lam_i} \in \Irr_{2'}(\SSS_{k_i})$,  and
$$[n]_2 = [k_1]_2 < [k_2]_2 < \ldots < [k_m]_2.$$
\end{lem}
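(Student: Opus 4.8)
The plan is to mimic the argument for $GL_n(q)$ (Lemma \ref{order}), replacing each ingredient about $GL$-characters by its Ennola-dual counterpart for $GU$. First I would recall that every $\chi \in \Irr(GU_n(q))$ has a Dipper--James-type label \eqref{gl1}, where now the $s_i$ range over Frobenius classes of $\bar\FF_{q^2}^\times$ under $x \mapsto x^{-q}$ (rather than $x \mapsto x^q$), and $S(s_i,\lam_i)$ is an irreducible character of $GU_{k_id_i}(q)$ with $d_i$ the size of the relevant class. Since $\chi(1)$ is odd, each $S(s_i,\lam_i)(1)$ is odd; the analogue of \cite[Lemma 5.7(ii)]{KT2} for unitary groups (or a direct degree computation) forces $d_i = 1$, so each $s_i$ lies in $\FF_{q^2}^\times$ and has order dividing $q+1$; combined with the parity of the degree this puts $s_i \in \OB_{2'}(C_-)\cdot$(whatever $2$-part survives) — more precisely $s_i \in C_-$, the subgroup of order $q+1$. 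Distinctness of the minimal polynomials over $\FF_{q^2}$ is part of the uniqueness in the Dipper--James parametrization, so the $s_i$ are pairwise distinct elements of $C_-$.

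Next I would control the $k_i$. The index $|G : P_\chi|$ must be odd, where $P_\chi$ is the parabolic with Levi $\prod_i GU_{k_i}(q)$ (for $\kappa = -$ the Harish-Chandra Levi subgroups of $GU_n(q)$ are of the form $GU_{a}(q) \times \prod GL_{b_j}(q^2)$, but since all $d_i=1$ here only the unitary factors occur). Repeated application of \cite[Lemma 4.4(i)]{NT1} — or rather its unitary analogue, which I would either cite from \cite{NT1} directly or observe follows because $|GU_n(q) : GU_a(q)\times GU_b(q)|$ is odd iff $\binom{n}{a}$ is odd, since $(q+1)$ is even and the relevant $q$-binomial is congruent mod $2$ to $\binom{n}{a}$ — yields that $n!/\prod_i k_i!$ is odd. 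Corollary \ref{binom2} then lets me relabel so that $[n]_2 = [k_1]_2 < [k_2]_2 < \dots < [k_m]_2$. Finally, the congruence \eqref{gl2} has a unitary counterpart: $\chi^{\lam}(1) \equiv \deg(S(s,\lam)) \pmod 2$ whenever $s \in C_-$, which follows from the hook formula for the degree of the unipotent character $\varphi^\lam$ of $GU_n(q)$ (its degree is, up to a power of $q$, the same hook-length expression as for $\SSS_n$, hence $\equiv \chi^\lam(1) \pmod 2$), together with $S(s,\lam) = \varphi^\lam(\hat s \circ \det)$ as defined just before the lemma. This gives $\chi^{\lam_i} \in \Irr_{2'}(\SSS_{k_i})$, and the uniqueness of the whole label is inherited from the uniqueness in \eqref{gl1}.

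The main obstacle I anticipate is not conceptual but bookkeeping: making sure every $GL$-fact invoked in the $\kappa=+$ proof (the degree lemma ruling out $d_i > 1$, the oddness criterion for parabolic indices, the parity congruence \eqref{gl2}) is available verbatim for $\kappa = -$. Each of these is true for $GU_n(q)$ — indeed \cite{KT2} and \cite{NT1} are stated for both types — but I would want to either quote the unitary versions explicitly or spend a sentence reducing them to the hook-length formula for unitary unipotent characters in \cite[(1.15)]{FS} (which already covers $GU$) and to the $2$-adic valuation of $|GU_n(q)|$. Once those are in place the argument is a line-by-line transcription of the proof of Lemma \ref{order}, and no genuinely new idea is needed.
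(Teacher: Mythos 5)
Your combinatorial endgame is right, but the structural frame you hang it on does not exist for unitary groups, and this is precisely the point where the paper's proof diverges from a ``line-by-line transcription'' of Lemma \ref{order}. You invoke ``the parabolic $P_\chi$ with Levi $\prod_i GU_{k_i}(q)$'' and argue that $|G:P_\chi|$ is odd. As you yourself note, the Harish-Chandra Levi subgroups of $GU_n(q)$ have the shape $GU_a(q) \times \prod_j GL_{b_j}(q^2)$, i.e.\ they contain \emph{at most one} unitary factor; so for $m \geq 2$ there is no rational parabolic subgroup of $GU_n(q)$ whose Levi is $GU_{k_1}(q) \times \cdots \times GU_{k_m}(q)$, and your parenthetical ``since all $d_i=1$ here only the unitary factors occur'' does not rescue this -- it makes the situation worse, since several unitary factors can never occur in a Harish-Chandra Levi. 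Consequently $\chi$ is \emph{not} Harish-Chandra induced from $\prod_i GU_{k_i}(q)$, the symbol $\circ$ in \eqref{gl1} cannot be read as parabolic induction in the unitary case, and the step ``$|G:P_\chi|$ is odd, hence $n!/\prod_i k_i!$ is odd'' has no object to apply to.

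The paper's proof replaces this by Lusztig's Jordan decomposition: $\chi$ corresponds to a pair $(s,\varphi)$ with $s$ semisimple in $G^* \cong G$ and $\varphi$ unipotent on $\CB_G(s)$; the condition $2 \nmid \chi(1)$ together with the degree formula $\chi(1) = |G:\CB_G(s)|_{p'}\,\varphi(1)$ forces $\CB_G(s) \cong \prod_i GU_{k_i}(q)$ with $2 \nmid |G:\CB_G(s)|$, the $s_i$ are the central eigenvalue blocks of $s$ (pairwise distinct by the very structure of a centralizer, which also gives uniqueness of the label), and $\chi$ is recovered via \emph{Lusztig} induction $R^G_{\CB_G(s)}$ as in \eqref{rgl}, citing \cite[Theorem 13.25]{DM}. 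From that point on your bookkeeping is exactly the paper's: \cite[Lemma 4.4(i)]{NT1} gives that $n!/\prod_i k_i!$ is odd, Corollary \ref{binom2} gives the unique relabeling with $[n]_2 = [k_1]_2 < \cdots < [k_m]_2$, and the hook formula \cite[(1.15)]{FS} gives $\chi^{\lam_i}(1) \equiv \varphi^{\lam_i}(1) \pmod 2$. So the gap is not in the arithmetic but in the induction functor: you need twisted (Lusztig) induction from the centralizer of a semisimple element, not Harish-Chandra induction from a parabolic -- a distinction the paper has to track later as well, e.g.\ when $\tau$-equivariance is only argued via Harish-Chandra induction in the case $\kappa = +$.
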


\begin{proof}
We can identify the dual group $G^*$ with $G$ and consider the Jordan correspondent $(s,\varphi)$ of $\chi$, where 
$s \in G$ is semisimple and $\varphi$ is a unipotent character of $\CB_G(s)$, cf. \cite{DM}. It is easy to see that the condition $2\nmid \chi(1)$ 
implies that 
$$\CB_G(s) \cong GU_{k_1}(q) \times GU_{k_2}(q) \times \ldots \times GU_{k_m}(q)$$ 
with $\sum^m_{i=1}k_i = n$ and 
furthermore, $n!/\prod^m_{i=1}k_i!$ is odd by \cite[Lemma 4.4(i)]{NT1}.  Hence by Corollary \ref{binom2} there is a unique relabeling of the $k_i$'s such that 
$$[n]_2 = [k_1]_2 < [k_2]_2 < \ldots < [k_m]_2.$$ 
Now we can write $s = \diag(s_1,s_2, \ldots ,s_m)$ with $s_i \in \ZB(GU_{k_i}(q))$ and 
then identify $\ZB(GU_{k_i}(q))$ with $C_-$. Note that the $s_i$'s are pairwise different because of the structure of $\CB_G(s)$.  
Also, we can write 
$\varphi = \varphi_1 \otimes \varphi_2 \otimes \ldots \otimes \varphi_m$ with $\varphi_i = \varphi^{\lam_i} \in \Irr(GU_{k_i}(q))$ being the 
unipotent character labeled by
$\lam_i \vdash k_i$. Since $2 \nmid \varphi(1)$, we see that $\chi^{\lam_i}(1) \equiv \varphi_i(1) \equiv 1 (\mod 2)$ by 
\cite[(1.15)]{FS}. As before, $S(s_i,\lam_i)$ is the irreducible character of $GU_{k_i}(q)$ corresponding to $(s_i,\varphi^{\lam_i})$.
We also note by \cite[Theorem 13.25]{DM} that
\begin{equation}\label{rgl} 
  \chi = R^G_{\CB_G(s)}\left(S(s_1,\lam_1) \otimes S(s_2,\lam_2) \otimes \ldots \otimes \ldots S(s_m,\lam_m)\right),
\end{equation}  
where $R^G_{\CB_G(s)}$ is now the Lusztig induction.
\end{proof} 
 
We can now prove the following result which implies Theorem E:

\begin{thm}\label{main-glu}
Let $n \in \Z_{\geq 1}$, $q$ be an odd prime power, $G = GL_n(q)$ or $GU_n(q)$, and $P \in \Syl_2(G)$. Then 
\begin{enumerate}[\rm (i)]
\item The field of values of any character in $\Irr_{2'}(G)$ and $\Irr_{2'}(\NB_G(P))$ is contained in $\QQ(\tv) = \QQ(\exp(\frac{2\pi i}{q-\kappa 1}))$,
where $\kappa = +$ if $G = GL_n(q)$ and $\kappa = -$ if $G = GU_n(q)$.
\item There is a canonical 
bijection $\chi \mapsto \chi^\sharp$ between $\Irr_{2'}(G)$ and $\Irr_{2'}(\NB_G(P))$ that commutes with the action of 
$\Gamma = {\mathrm {Gal}}(\QQ(\tv)/\QQ)$. Furthermore, we can choose $P$ to be $D$-invariant and 
$\chi \mapsto \chi^\sharp$ to be $D$-equivariant, where $D$ is defined in \eqref{outer}.
\end{enumerate}
\end{thm}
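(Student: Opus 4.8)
The plan is to reduce Theorem~\ref{main-glu} to the already–established cases by peeling off the ``primes'' in the Dipper--James (respectively Jordan) label of an odd-degree character and matching them against the structure of $\NB_G(P)$ described in Lemma~\ref{power}. First I would treat the case $n=2^m$ separately, since here Lemma~\ref{power} already provides a canonical, $\Gamma$-equivariant and $D$-equivariant bijection between $\Irr_{2'}(\NB_G(P))$ and $\tCk \times \HC(n)$; what remains for $n = 2^m$ is to produce a canonical $\Gamma$- and $D$-equivariant bijection between $\Irr_{2'}(G)$ and $\tCk \times \HC(n)$. For this I would use Lemma~\ref{order} (for $GL$) and Lemma~\ref{unitary} (for $GU$): an odd-degree $\chi$ forces $m_{\mathrm{DJ}}=1$ in its label, i.e. $\chi = S(s,\lam)$ with $s \in C_\kappa$ and $\chi^\lam \in \Irr_{2'}(\SSS_n)$. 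Since $n = 2^m$, the odd partitions of $n$ are exactly the hook partitions $\HC(2^m)$ (a standard fact; it also follows from the $2$-core analysis in \cite{APS}), so $\chi \leftrightarrow (\hat s, \lam) \in \tCk \times \HC(n)$. The Galois group $\Gamma$ acts on $S(s,\lam) = \varphi^\lam(\hat s \circ \det)$ only through $\hat s$ (the unipotent character $\varphi^\lam$ is rational), matching the action on $\tCk$; and $D$ acts via $F_p$ and, when $\kappa = +$, via $\tau$, exactly as on $\tCk$ (using $\tau(\hat s) = \hat s^{-1}$ and that $\tau$ fixes each $\varphi^\lam$). Composing with $\beta^{-1}$ from Lemma~\ref{power} gives $\chi \mapsto \chi^\sharp$ for $n = 2^m$.

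Next I would handle general $n$ with $2$-adic decomposition $n = 2^{n_1} + \ldots + 2^{n_r}$. On the group side, a Sylow $2$-subgroup of $G = GL^\kappa_n(q)$ decomposes as $P = P^{(1)} \times \ldots \times P^{(r)}$ with $P^{(i)} \in \Syl_2(GL^\kappa_{2^{n_i}}(q))$, realized inside the Levi subgroup $L = GL^\kappa_{2^{n_1}}(q) \times \ldots \times GL^\kappa_{2^{n_r}}(q)$, and one checks as in the proof of Theorem~\ref{main-gl} that $\NB_G(P) \le L$, so $\NB_G(P) = \NB_L(P) = \NB_{GL^\kappa_{2^{n_1}}(q)}(P^{(1)}) \times \ldots \times \NB_{GL^\kappa_{2^{n_r}}(q)}(P^{(r)})$ (here one must be slightly careful with the central torus, as in \eqref{norm1}, but the factorization of $\Irr_{2'}$ goes through). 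Hence $\Irr_{2'}(\NB_G(P)) = \prod_i \Irr_{2'}(\NB_{GL^\kappa_{2^{n_i}}(q)}(P^{(i)}))$, and applying the $2^{n_i}$ case componentwise gives a canonical, $\Gamma$- and $D$-equivariant bijection $\Irr_{2'}(\NB_G(P)) \longleftrightarrow \prod_i (\tCk \times \HC(2^{n_i}))$.

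On the character side of $G$, given $\chi \in \Irr_{2'}(G)$ I would take the canonical label from Lemma~\ref{order} (if $\kappa = +$) or Lemma~\ref{unitary} (if $\kappa = -$): $\chi = S(s_1,\lam_1) \circ \ldots \circ S(s_m,\lam_m)$ with $s_i \in C_\kappa$ pairwise distinct, $\chi^{\lam_i} \in \Irr_{2'}(\SSS_{k_i})$, and $[n]_2 = [k_1]_2 < \ldots < [k_m]_2$. Now apply the symmetric-group bijection $\alpha$ of Theorem~\ref{sym} to each $\chi^{\lam_i}$, obtaining for each $i$ a tuple of hook partitions indexed by the $2$-adic digits of $k_i$; since the sets of nonzero $2$-adic digits of $k_1, \ldots, k_m$ are pairwise disjoint (by the strict inequalities $[k_1]_2 < \ldots < [k_m]_2$ and Lemma~\ref{binom1}) and together comprise exactly the digits of $n$, assembling these with the scalars $\hat s_i \in \tCk$ (the scalar $\hat s_i$ being attached to the block of digits coming from $k_i$, in particular to the bottom digit $[k_i]_2$) yields a well-defined element of $\prod_i(\tCk \times \HC(2^{n_i}))$; conversely this process is reversible, so it is a bijection $\Irr_{2'}(G) \longleftrightarrow \prod_i(\tCk \times \HC(2^{n_i}))$. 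Defining $\chi \mapsto \chi^\sharp$ by composing the two bijections through $\prod_i(\tCk \times \HC(2^{n_i}))$, one checks the Galois and $D$-equivariance: $\Gamma$ and the $F_p$, $\tau$ parts of $D$ act only on the $\tCk$-coordinates (as in the $n = 2^m$ case, using that unipotent characters and the symmetric-group data are rational and $D$-fixed), and this matches on both sides; statement~(i) on the field of values follows since every coordinate datum has values in $\QQ(\tv)$. I expect the main obstacle to be bookkeeping: verifying that the two unrelated-looking canonical constructions (Dipper--James/Jordan decomposition on one side, the hook/$2$-core combinatorics of Theorem~\ref{sym} and Lemma~\ref{power} on the other) genuinely land in the \emph{same} indexing set $\prod_i(\tCk \times \HC(2^{n_i}))$ compatibly with all the symmetries — in particular pinning down exactly which digit-block each scalar $\hat s_i$ attaches to, and checking that Harish-Chandra/Lusztig induction respects the Galois and $D$ actions so that the label-level bijections are equivariant.
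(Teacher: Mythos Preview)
Your proposal is correct and follows essentially the same route as the paper: factor through the intermediate set $\Omega(n)=\prod_{i=1}^{r}\bigl(\tCk\times\HC(2^{n_i})\bigr)$, using Lemma~\ref{power} componentwise on the $\NB_G(P)$ side and the Dipper--James/Jordan label together with the symmetric-group hook bijection on the $G$ side, then verify $\Gamma$- and $D$-equivariance via rationality of unipotent characters. The only cosmetic difference is that the paper quotes Corollary~\ref{young} (and then reads off the hooks $\nu_i$) where you invoke the map $\alpha$ of Theorem~\ref{sym} directly; these are the same data.
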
 
 
\begin{proof}
Write $n = 2^{n_1} + 2^{n_2} + \ldots + 2^{n_r}$ with $n_1  > \ldots > n_r \geq 0$, and define 
$$\Omega(n) := \tCk \times \HC(2^{n_1}) \times \tCk \times \HC(2^{n_2}) \times \ldots 
    \times \tCk \times \HC(2^{n_r}).$$
We will construct the desired bijection by composing two canonical bijections 
$$\al: \Irr_{2'}(G) \to \Omega(n),~~ \beta:\Omega(n) \to \Irr_{2'}(\NB_G(P)).$$

\smallskip
(a) First we consider the case $r=1$. Then $\beta^{-1}$ is the inverse of the map constructed in Lemma \ref{power}.
Next, if $\chi \in \Irr_{2'}(G)$, then by Theorem \ref{main-gl}(i) and Lemma \ref{unitary}, $\chi = S(s,\lam)$ with $s \in \tCk$ and 
$\lam \vdash n$, and moreover $\chi^\lam \in \Irr(\SSS_n)$ has odd degree by \eqref{gl2}. Hence $\lam \in \HC(n)$ by 
\cite[Lemma 3.1]{G} and we can define $\al(\chi) = (s,\lam)$. Note that, since $G$ is uniform, unipotent characters of $G$ are $\QQ$-linear 
combinations of the Deligne-Lusztig character $R^G_T(1_T)$, $T < G$ any maximal torus, whence they are rational. Furthermore,
$\hat{s} \circ {\mathrm {det}}$ takes values in $\QQ(\tv)$. Hence, $\QQ(\chi) \subseteq \QQ(\tv)$, and for any $\sigma \in \Gamma$, 
$$\chi^\sigma = S(s,\lam)^\sigma = S(s^\sigma,\lam).$$
Next, the unipotent character $\varphi^\lam$ is $D$-invariant (see eg. \cite[Theorem 2.5]{M}), and $F_p$, respectively $\tau$, sends
$\hat{s} \circ {\mathrm {det}}$ to $\hat{s}^p \circ {\mathrm {det}}$, respectively $\hat{s}^{-1} \circ {\mathrm {det}}$.
Hence, (i) and (ii) hold in this case.

\smallskip
(b) In the general case, we can fix a decomposition (orthogonal if $\kappa = -$)
$$V = V_1 \oplus V_2 \oplus \ldots \oplus V_r,$$
where $V_i = \FF_q^{2^{n_i}}$ if $\kappa = +$ and $V_i = \FF_{q^2}^{2^{n_i}}$ if $\kappa = -$. 
We also fix a basis in $V$ compatible with this decomposition and define $F_p$, $\tau$ in this basis.
Then we choose $P = P_1 \times P_2 \times \ldots \times P_r$ with $P_i \in \Syl_2(G_i)$ being $D$-invariant for 
$G_i:= GL^\kappa(V_i) \cong GL^\kappa_{2^{n_i}}(q)$. Note that $P_i$ is an irreducible subgroup of $G_i$ and so
\begin{equation}\label{norm2}
  \NB_G(P) = \NB_{G_1}(P_1) \times \ldots \times \NB_{G_r}(P_r).
\end{equation}   
So if $\theta = \theta_1 \otimes \ldots \otimes \theta_r \in \Irr_{2'}(\NB_G(P))$, then we can define
$$\beta(\theta) := (\hat{t}_1, \mu_1,\hat{t}_2, \mu_2, \ldots,\hat{t}_r,\mu_r)$$
if the bijection in Lemma \ref{power} sends $\theta_i \in \Irr_{2'}(\NB_{G_i}(P_i))$ to $(\hat{t}_i,\mu_i)$. Lemma \ref{power}
also implies that $\QQ(\theta) \subseteq \QQ(\tv)$ and that $\beta$ commutes with the action of $\Gamma$ and $D$.

Now consider any $\chi \in \Irr_{2'}(G)$ and apply Theorem \ref{main-gl}(i) and Lemma \ref{unitary} to $\chi$.  
Assume first that $m = 1$, so that $\chi = S(s,\lam)$. Then $2\nmid \chi^\lam(1)$ by \eqref{gl2}. Applying 
Corollary \ref{young} to the Young subgroup $Y = \SSS_{2^{n_1}} \times \ldots \times \SSS_{2^{n_r}}$, we obtain
$$(\chi^\lam)^\star = \chi^{\nu_1} \otimes \chi^{\nu_2} \otimes \ldots \otimes \chi^{\nu_r},$$
where $\nu_i \in \HC(2^{n_i})$ by \cite[Lemma 3.1]{G}. In this case, we define
\begin{equation}\label{for-a}
  \al(\chi) := (\hat{s},\nu_1, \hat{s},\nu_2, \ldots,\hat{s},\nu_r).
\end{equation}   
In the case of general $m$, note that, by Lemma \ref{binom1}, each 
$k_i$ is the sum of some $2^{n_j}$'s. Moreover, when we express all $k_i$, $1 \leq i \leq m$ this way, each $2^{n_j}$ occurs in precisely one
of these $m$ expressions. Now we can apply \eqref{for-a} to each $S(s_i,\lam_i)$ and then define $\al(\chi)$ by putting all 
$\al(S(s_i,\lam_i))$ together. It is easy to check that the resulting map is a bijection. 
Furthermore, as in (a), $\hat{s_i} \circ {\mathrm {det}}$ take values in $\QQ(\tv)$. Hence, \eqref{gl1}, \eqref{rgl}, and \cite[Proposition 12.2]{DM} (and
the paragraph right before it) imply that $\QQ(\chi) \subseteq \QQ(\tv)$ and that, if 
$$\chi = R^G_L\left(S(s_1,\lam_1) \otimes S(s_2,\lam_2) \otimes \ldots \otimes \ldots S(s_m,\lam_m)\right)$$
(for a suitable Levi subgroup $L$ which in our case can be chosen to be $D$-invariant),
then for any $\sigma \in \Gamma$ we have 
$$\chi^\sigma = R^G_L\left(S(s_1^\sigma,\lam_1) \otimes S(s_2^\sigma,\lam_2) \otimes \ldots \otimes \ldots S(s_m^\sigma,\lam_m)\right).$$
Thus $\al$ commutes with the action of $\Gamma$. The fact that $\al$ commutes with the action of $F_p$ follows from 
\cite[Corollary 2.3]{NTT} and the arguments in (a). In the case $\kappa = +$, $\al$ also commutes with 
$\tau$ as $R^G_L$ is just the Harish-Chandra induction. Hence, (i) and (ii) hold in this case as well. 
\end{proof}

Note that if $P \in \Syl_p(G)$ satisfies $\NB_G(P) = P\CB_G(P)$ for some {\it odd} prime $p$, then the restriction yields 
a natural correspondence between the $p'$-degree irreducible characters of the principal $p$-block of $G$ and those of
$\NB_G(P)$, see \cite[Theorem A]{NTV}. Theorem E yields a canonical correspondence but with $p = 2$.

\begin{proof}[Proof of Corollary F]
The number of real odd-degree irreducible characters of $\NB_G(P)$ can be easily computed using Lemma \ref{power} and 
\eqref{norm2}. Since the correspondence $\chi \mapsto \chi^\sharp$ preserves fields of values of characters by Theorem E,
the statement follows.
\end{proof}
  
\begin{cor}\label{parabolic}
Let $n \in \Z_{>1}$, $q$ be an odd prime power, $G = GL_n(q)$, and let $P$ be a parabolic subgroup of 
odd index in $G$ with Levi subgroup $L$. Then there is a canonical bijection between $\Irr_{2'}(G)$, $\Irr_{2'}(P)$, and 
$\Irr_{2'}(L)$.
\end{cor}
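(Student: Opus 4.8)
The plan is to construct, separately, a canonical bijection $\Irr_{2'}(L)\to\Irr_{2'}(P)$ by inflation and a canonical bijection $\Irr_{2'}(G)\to\Irr_{2'}(L)$ out of Theorem \ref{main-glu}, and then compose the two. After conjugating in $G$ I would assume $P=UL$ is a standard parabolic with unipotent radical $U$ and Levi subgroup $L=\prod_{i=1}^m GL(V_i)\cong\prod_{i=1}^m GL_{k_i}(q)$, where $V=\bigoplus_{i=1}^m V_i$ and $\dim V_i=k_i$. Since $|G:P|$ is odd, \cite[Lemma 4.4(i)]{NT1} gives that $n!/\prod_i k_i!$ is odd, so by Lemma \ref{binom1} the binary expansions of the $k_i$ have pairwise disjoint supports whose union is the binary support of $n=\sum_{j=1}^r 2^{n_j}$; regrouping the powers $2^{n_j}$ then yields a refinement $V=\bigoplus_{j=1}^r U_j$ of $V=\bigoplus_i V_i$ by $\FF_q[L]$-invariant subspaces with $\dim U_j=2^{n_j}$. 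This transparent Sylow-$2$ and module structure of an odd-index parabolic is what makes the argument go through.

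For the first bijection I would fix $S=S_1\times\dots\times S_m\in\Syl_2(L)$ with $S_i\in\Syl_2(GL(V_i))$; as $|U|$ is a power of the odd prime dividing $q$, $S$ is also a Sylow $2$-subgroup of $P$ and of $G$. The graded pieces of the lower central series of $U$ are, as $\FF_q[L]$-modules, direct sums of the spaces $\Hom_{\FF_q}(V_j,V_i)$ with $i<j$, on which the factor $S_i$ acts by $f\mapsto s_i f$, the factor $S_j$ by $f\mapsto f s_j^{-1}$, and the remaining factors trivially. Since $S_i$ contains the central scalars of $GL(V_i)$ of order $(q-1)_2\ge 2$, one has $V_i^{S_i}=0$, so any $S$-fixed $f$ has image in $V_i^{S_i}=0$; hence the $S$-fixed points of each graded piece vanish, and since $S$ acts coprimely on $U$ the usual filtration argument gives $C_U(S)=1$. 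Standard coprime-action facts (Brauer's permutation lemma together with a Sylow count on the conjugacy classes of $U$, or the Glauberman correspondence; see \cite{Is}) then show that $1_U$ is the unique $S$-invariant irreducible character of $U$. Now if some $\theta\in\Irr_{2'}(P)$ were nontrivial on $U$, pick an irreducible constituent $\psi\ne 1_U$ of $\theta_U$; then $[P:I_P(\psi)]$ divides $\theta(1)$ and so is odd, so after replacing $\psi$ by a suitable $P$-conjugate we may assume $S\le I_P(\psi)$, whence $S\le I_P(\psi)\cap L=I_L(\psi)$ and $\psi$ is $S$-invariant — a contradiction. So $U$ is contained in the kernel of every $\theta\in\Irr_{2'}(P)$, and as $|U|$ is odd, inflation is a degree-preserving bijection $\Irr_{2'}(L)=\Irr_{2'}(P/U)\to\Irr_{2'}(P)$.

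For the second bijection I would instead choose $S$ compatible with the finer decomposition, $S=\prod_{j=1}^r\Syl_2(GL(U_j))$, which can also be written $\prod_i S_i$ with $S_i\in\Syl_2(GL(V_i))$. Using the standard description of the Sylow-$2$ action on the natural module of $GL_{2^{n_j}}(q)$ for $q$ odd (it is irreducible) and that the dimensions $2^{n_j}$ are pairwise distinct, one sees — exactly as in the completion of the proof of Theorem \ref{main-gl} — that each $U_j$ is the unique $\FF_q[S]$-submodule of $V$ of its dimension, hence is fixed by $\NB_G(S)$, and hence so is each $V_i=\bigoplus_{U_j\subseteq V_i}U_j$. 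Therefore $\NB_G(S)\le\prod_i GL(V_i)=L$, so $\NB_G(S)=\NB_L(S)=\prod_i\NB_{GL(V_i)}(S_i)$. Theorem \ref{main-glu} now gives a canonical bijection $\Irr_{2'}(G)\leftrightarrow\Irr_{2'}(\NB_G(S))$, and, applied inside each factor $GL(V_i)\cong GL_{k_i}(q)$ and combined by outer tensor products, a canonical bijection $\Irr_{2'}(L)=\bigotimes_i\Irr_{2'}(GL(V_i))\leftrightarrow\bigotimes_i\Irr_{2'}(\NB_{GL(V_i)}(S_i))=\Irr_{2'}(\NB_L(S))=\Irr_{2'}(\NB_G(S))$. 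Composing with the inflation bijection yields the desired canonical bijections relating $\Irr_{2'}(G)$, $\Irr_{2'}(P)$ and $\Irr_{2'}(L)$.

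The main difficulty is really no difficulty once Theorem \ref{main-glu} is available: the substantive step is the combinatorial reduction (Lemma \ref{binom1}) identifying odd-index parabolics with partitions of the binary digits of $n$, after which both $C_U(S)=1$ and $\NB_G(S)\le L$ fall out at once. The two points that need care are the module-theoretic input behind $\NB_G(S)\le L$ (the same one already used for Theorem \ref{main-gl}) and the compatibility of the bijection of Theorem \ref{main-glu} with direct products, which is what licenses its factorwise application to $L$.
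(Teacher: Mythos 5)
Your proof is correct and follows essentially the same route as the paper: the same reduction via Lemma \ref{binom1} to a Sylow $2$-subgroup $S$ compatible with the refinement of $L$ into $GL_{2^{n_j}}(q)$-blocks, the same observation that $\NB_G(S)\le L$ factors as $\prod_i \NB_{GL_{k_i}(q)}(S_i)$, the same factorwise application of Theorem \ref{main-glu} composed through $\Irr_{2'}(\NB_G(S))$, and the same Glauberman-correspondence argument showing every odd-degree character of $P$ is trivial on $U$. The only (harmless) variation is that you verify $\C_U(S)=1$ directly from the action of the central involutions of the $S_i$ on the graded pieces $\Hom_{\FF_q}(V_j,V_i)$ of $U$, whereas the paper deduces $\C_G(S)\cap U=1$ from its structural formulas \eqref{norm1} and \eqref{norm2} for $\NB_G(S)$.
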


\begin{proof}
We may assume that $P$ is a standard parabolic subgroup with Levi subgroup 
$L = GL_{k_1}(q) \times GL_{k_2}(q) \times \ldots \times GL_{k_m}(q)$, where $m > 1$. As in the proof of Lemma \ref{order},
$2\nmid |G:P|$ implies that we may relabel the $k_i$'s so that $[k_1]_2 < [k_2]_2 < \ldots < [k_m]_2$. 
We also write $n = 2^{n_1} + 2^{n_2} + \ldots + 2^{n_r}$ where $n_1 > n_2 > \ldots > n_r \geq 0$. As in the proof of Theorem E, 
note by Lemma \ref{binom1} that each $k_i$ is the sum of some $2^{n_j}$'s. Moreover, when we express all $k_i$, $1 \leq i \leq m$ this way, 
each $2^{n_j}$ occurs in precisely one of these $m$ expressions.
For each $j$, choose $Q_j \in \Syl_2(GL_{2^{n_j}}(q))$. Then, if 
$k_i = 2^{n_{j_1}} + 2^{n_{j_2}} + \ldots + 2^{n_{j_a}}$,  we can choose $R_i := Q_{j_1} \times Q_{j_2} \times \ldots \times Q_{j_a} \in \Syl_2(GL_{k_i}(q))$,
and  $R := R_1 \times R_2 \times \ldots \times R_m \in \Syl_2(G)$. The formula \eqref{norm2} shows that 
$$\NB_G(R) = \NB_{GL_{k_1}(q)}(R_1) \times \NB_{GL_{k_2}(q)}(R_2) \times \ldots \times \NB_{GL_{k_m}(q)}(R_m)< L.$$ 
Consider any $\chi \in \Irr_{2'}(G)$ and let $\theta := \chi^\sharp \in \Irr_{2'}(\NB_G(R))$ as given by the correspondence in Theorem E.
Next, write $\theta= \theta_1 \otimes \theta_2 \otimes \ldots \otimes \theta_m$ with $\theta_i \in \Irr_{2'}(\NB_{GL_{k_i}(q)}(R_i))$. By 
Theorem E, for each $i$ there is a unique $\psi_i \in \Irr_{2'}(GL_{k_i}(q))$ such that 
$(\psi_i)^\sharp = \theta_i$. Now we can define 
$$\chi^\star = \psi := \psi_1 \otimes \psi_2 \otimes \ldots \otimes \psi_m \in \Irr_{2'}(L)$$
and check that the map $\chi \mapsto \chi^\star$ is a bijection between $\Irr_{2'}(G)$ and $\Irr_{2'}(L)$.

Finally, we show that the inflation (from $L$ to $P$) gives a natural bijection between $\Irr_{2'}(L)$ and $\Irr_{2'}(P)$. For, consider any 
$\varphi \in \Irr_{2'}(P)$ and suppose that $\varphi$ is nontrivial at $U$, the unipotent radical of $P$. Then the $L$-orbit on 
the irreducible constituents of $\varphi_U$ has odd size, and so $R < L$ fixes some $1_U \neq \lam \in \Irr(U)$. On the other
hand, \eqref{norm1} and \eqref{norm2} show that $\CB_G(R) \cap U = 1$. As $R$ acts coprimely on $U$, the Glauberman correspondence
\cite[Theorem 13.1]{Is} implies that $1_U$ is the only $R$-invariant irreducible character of $U$.
\end{proof}

The same proof as above yields an analogue of Corollary \ref{parabolic} for $G = GU_n(q)$ with $2\nmid q$: 

\begin{cor}\label{levi-unitary}
Let $n \in \Z_{>1}$, $q$ be an odd prime power, $G = GU_n(q)$, and let 
$L \cong GU_{k_1}(q) \times GU_{k_2}(q) \times \ldots \times GU_{k_m}(q)$ 
be a Levi subgroup of odd index in $G$. Then there is a canonical bijection between $\Irr_{2'}(G)$ and 
$\Irr_{2'}(L)$.
\hfill $\Box$
\end{cor}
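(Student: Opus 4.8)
The plan is to mimic the proof of Corollary~\ref{parabolic} essentially verbatim, replacing $GL$ by $GU$ throughout and using the unitary analogues of the results already established. First I would reduce to the case where $L \cong GU_{k_1}(q) \times \ldots \times GU_{k_m}(q)$ is a standard Levi subgroup; since $2 \nmid |G:L|$, Lemma~\ref{order}'s argument (via \cite[Lemma 4.4(i)]{NT1} and Corollary~\ref{binom2}) lets us relabel the $k_i$ so that $[k_1]_2 < [k_2]_2 < \ldots < [k_m]_2$, and then by Lemma~\ref{binom1} each $k_i$ is a sum of distinct $2^{n_j}$'s appearing in the $2$-adic decomposition $n = 2^{n_1} + \ldots + 2^{n_r}$, with each $2^{n_j}$ used in exactly one $k_i$.

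Next I would choose, for each $j$, a $D$-invariant $Q_j \in \Syl_2(GU_{2^{n_j}}(q))$ as provided by Lemma~\ref{power}, and assemble $R_i := \prod Q_{j} \in \Syl_2(GU_{k_i}(q))$ over the indices $j$ with $2^{n_j}$ occurring in $k_i$, so that $R := R_1 \times \ldots \times R_m \in \Syl_2(G)$. The key structural input is the normalizer formula \eqref{norm2}, which gives $\NB_G(R) = \NB_{GU_{k_1}(q)}(R_1) \times \ldots \times \NB_{GU_{k_m}(q)}(R_m)$, and this is contained in $L$ because $R_i$ is an irreducible subgroup of $GU_{k_i}(q)$ on $V_i$ and the direct summands $V_i$ are the homogeneous $R$-components. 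Then, given $\chi \in \Irr_{2'}(G)$, we take $\theta := \chi^\sharp \in \Irr_{2'}(\NB_G(R))$ from Theorem E (which applies to $GU_n(q)$), decompose $\theta = \theta_1 \otimes \ldots \otimes \theta_m$ with $\theta_i \in \Irr_{2'}(\NB_{GU_{k_i}(q)}(R_i))$, use Theorem E for each $GU_{k_i}(q)$ to get the unique $\psi_i \in \Irr_{2'}(GU_{k_i}(q))$ with $\psi_i^\sharp = \theta_i$, and set $\chi^\star := \psi_1 \otimes \ldots \otimes \psi_m \in \Irr_{2'}(L)$. Bijectivity is immediate since each step is a composition of bijections.

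The one point where the $GU$ case genuinely differs from the $GL$ case is that, in Corollary~\ref{parabolic}, there is an extra paragraph showing that inflation from the Levi $L$ to the parabolic $P$ is a bijection between $\Irr_{2'}(L)$ and $\Irr_{2'}(P)$ — this step has no counterpart here because in $GU_n(q)$ a proper parabolic subgroup has even index (the parabolic corresponds to isotropic flags, and the relevant index is divisible by $q+1$ times further factors that force evenness when $q$ is odd), so there are no odd-index parabolics to worry about and the statement is purely about Levi subgroups. I expect the main (minor) obstacle to be verifying the normalizer formula \eqref{norm2} in the unitary setting and confirming that each $R_i$ really is irreducible on $V_i$, so that the homogeneous-component argument placing $\NB_G(R)$ inside $L$ goes through; but this is already implicit in the proof of Theorem~\ref{main-glu}, so no new work is required. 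Hence the same proof as for Corollary~\ref{parabolic} yields the result. \hfill $\Box$
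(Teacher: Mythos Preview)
Your proposal is correct and matches the paper's approach exactly: the paper simply states that ``the same proof as above yields'' the result, and you have spelled out precisely how the argument of Corollary~\ref{parabolic} transfers, including the correct observation that the final paragraph on inflation to the parabolic is omitted in the unitary case.
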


\end{document}